\newcommand{\xdownarrow}[1]{
  \ensuremath{\begin{turn}{90}{\tiny${#1 }$}\end{turn}
    \left\downarrow\vbox to 0.4cm{}\right.\kern-\nulldelimiterspace
  }
}
\newcommand{\xuparrow}[1]{
  \ensuremath{
    \begin{turn}{90}{\tiny ${#1}$}\end{turn} \left\uparrow\vbox to 0.4cm{}\right. \kern-\nulldelimiterspace
  } 
}
\definecolor{asparagus}{rgb}{0.53, 0.66, 0.42}
\DeclareMathOperator{\Pro}{Pro}
\newcommand{\moduli}{\mathrm{Moduli}}
\newcommand{\liep}{\mathrm{Lie}^{\pi}_{F}}
\newcommand{\art}{\mathrm{art}}
\newcommand{\ZZ}{\mathbb{Z}}
\newcommand{\EE}{\mathbb{E}}
\newcommand{\FF}{\mathbb{F}}
\newtheorem*{theorem*}{Theorem}
\DeclareMathOperator{\Def}{Def}  
\DeclareMathOperator{\Fr}{Fr}   
\DeclareMathOperator{\op}{op}   
\DeclareMathOperator{\coAlg}{coAlg}   
\DeclareMathOperator{\wafp}{wafp}  
\DeclareMathOperator{\colim}{colim}  
\DeclareMathOperator{\Free}{Free}
\newcommand{\mytimes}[1]{\mathbin{\operatorname*{\times}_{#1}^{}}}
\DeclareMathOperator{\ad}{ad}
\DeclareMathOperator{\Poly}{Poly}
\DeclareMathOperator{\Fun}{Fun}
\DeclareMathOperator{\Perf}{Perf}
\DeclareMathOperator{\Forget}{Forget}
\DeclareMathOperator{\trdeg}{trdeg}
\DeclareMathOperator{\Lie}{Lie}
\DeclareMathOperator{\sqz}{sqz} 
\DeclareMathOperator{\Spf}{Spf} 
\DeclareMathOperator{\fields}{Fields} 
\DeclareMathOperator{\Spec}{Spec} 
\DeclareMathOperator{\Ind}{Ind} 
\DeclareMathOperator{\cofib}{cofib} 
\DeclareMathOperator{\End}{End} 
\DeclareMathOperator{\triv}{triv} 
\DeclareMathOperator{\Pperf}{Perf} 
\DeclareMathOperator{\APperf}{APerf} 
\DeclareMathOperator{\Map}{Map}
\DeclareMathOperator{\aug}{aug}
\DeclareMathOperator{\fib}{fib} 
\DeclareMathOperator{\Lan}{Lan} 
\DeclareMathOperator{\Gal}{Gal}
\DeclareMathOperator{\Tot}{Tot}
\DeclareMathOperator{\cN}{cN} 
\DeclareMathOperator{\ft}{ft} 
\newcommand{\myotimes}[1]{\mathbin{\operatorname*{\otimes}_{#1}}}
\DeclareMathOperator{\Alg}{Alg}
\DeclareMathOperator{\aaalg}{alg}
\DeclareMathOperator{\Der}{Der}
\DeclareMathOperator{\LieAlgd}{LieAlgd}
\DeclareMathOperator{\id}{id}  
\DeclareMathOperator{\Sp}{Sp}
\renewcommand{\mod}{\mathrm{Mod}}
\theoremstyle{definition}
\newtheorem{definition}{Definition}[section]
\newtheorem{construction}[definition]{Construction}
\newtheorem{example}[definition]{Example}
\newtheorem*{example*}{Example}
\newtheorem{notation}[definition]{Notation}
\newtheorem{remark}[definition]{Remark}
\theoremstyle{theorem}
\newtheorem{proposition}[definition]{Proposition}
\newtheorem{lemma}[definition]{Lemma}
\newtheorem{corollary}[definition]{Corollary}
\newtheorem{theorem}[definition]{Theorem}
\newcommand{\SCR}{\mathrm{SCR}}
\newcommand{\Mod}{\mathrm{Mod}}
\begin{document}

\title{Purely Inseparable Galois theory I: \\  The Fundamental Theorem}
 
\author[Lukas Brantner]{Lukas Brantner}
\address{Oxford University,   Universit\'{e} Paris--Saclay (CNRS)} 
\email{lukas.brantner@maths.ox.ac.uk, lukas.brantner@universite-paris-saclay.fr}

\author[Joe Waldron]{Joe Waldron}
\address{Michigan State University}
\email{waldro51@msu.edu}

\begin{abstract} 
We construct  a Galois correspondence for finite purely inseparable field \mbox{extensions $F/K$}, generalising a classical result of Jacobson for extensions of exponent one  \mbox{(where $x^p \in K$ for  all $x$).}
\end{abstract} 
\maketitle
 
\tableofcontents   

\section{Introduction\vspace{-1pt}}
A finite extension of fields $F/K$ of characteristic $p$ is  \textit{purely inseparable} if for each $x\in F$, there is some $n$ such that $x^{p^n}\in K$.
Any  {finite} field extension $F/k$ can be broken down into a separable \mbox{part $K/k$  and} a purely inseparable part $F/K$. While the intermediate extensions of $K/k$  can be understood using classical Galois theory \cite{galois1846lettre}, the situation is more subtle  {for the purely inseparable extension $F/K$.}  {For instance if $F/K$ is purely inseparable, the classical Galois group satisfies $\Gal(F/K)=0$. } \vspace{2pt}

If $F/K$ has exponent one, that is, if  $x^p$ belongs to $K$  for all  $x\in F$, then Jacobson \cite{MR11079} classified intermediate extensions of $F/K$ in terms of  the restricted Lie algebra $\Der_K(F)$ \mbox{of derivations.}
Later, Sweedler \cite{MR223343},  Gerstenhaber--Zaromp \cite{MR266904}, and Chase \cite{CIS-333302}    extended \mbox{Jacobson's} correspondence to modular subextensions of  modular extensions,  by using the  higher derivations of Hasse and Schmidt  \cite{MR1581557}. \vspace{2pt}

In \Cref{fundamental_theorem} of   this article, we  establish a Galois correspondence for general finite purely \mbox{inseparable}  field extensions by using the methods of derived algebraic geometry.\vspace{-1pt}

\subsection{Background\vspace{-2pt}}
We begin by introducing the main objects of study:
\begin{definition}[Purely inseparable extensions] Let $F/K$ be a field  extension in \mbox{characteristic $p>0$.}
\begin{enumerate}
\item The extension is said to be \textit{purely inseparable} if for any $x\in F$,   {there is $i$ such that} $x^{p^i}$ \mbox{belongs to $K$.}
\item The extension has \textit{exponent $n$} if $x^{p^n}\in K$ for \textit{all} $x\in F$, and $n$ is minimal with this property.
\end{enumerate}
\end{definition}

Given a finite purely inseparable field extension $F/K$, we can consider the $F$-vector space\vspace{-1pt}
$ \Der_K(F)$
 of $K$-linear derivations of $F$, that is, of  $K$-linear maps $D: F \rightarrow F$ satisfying  $$D(xy) = D(x)y + x D(y) \mathrm{\ \ \ \ for\ all\ } x,y\in F$$ for all $x,y\in F$. Observe that  this forces $D(a)=0$ for all $a\in K$.\vspace{3pt} 

The $F$-vector space $   \Der_K(F)$ is equipped with a $K$-bilinear   Lie bracket 
$$ [-,-]: \Der_K(F) \times \Der_K(F) \rightarrow \Der_K(F)$$
$$  \ \ \  (D_1 , D_2) \mapsto  [D_1,D_2] := D_1 \circ D_2 - D_2 \circ D_1 $$
and a self map, called the restriction, which is given by
$$(-)^{[p]} :  \Der_K(F) \rightarrow  \Der_K(F)$$
$$  \ \ \   \ \ \  \ \ D \mapsto D^{[p]}:= \underbrace{D \circ \ldots \circ D}_{p}.$$
Denoting by 
 $\ad(x)$ the adjoint representation $\Der_K(F) \rightarrow \Der_K(F),  y \mapsto [x,y]$,  we have
 for all  derivations $ D_1, D_2 \in \Der_K(F)$ \mbox{and all scalars $\lambda \in K$:}
 \begin{enumerate} 
\item $(\lambda D_1)^{[p]} = \lambda^p D_1^{[p]}$;\vspace{6pt}
\item $\ad(D_1^{[p]}) = \ad(D_1)^{p}$;\vspace{-3pt}

\item  $ \displaystyle (D_1+D_2)^{[p]}  \hspace{-1pt} \hspace{-1pt}=  \hspace{-1pt}\hspace{-1pt} D_1^{[p]} +   \sum_{i=1}^{p-1} s_i(D_1, D_2)+ D_2^{[p]}$  

 {Here $is_i(D_1,D_2)$ is the coefficient {of $t^{i-1}$ \hspace{-3pt} in   $\ad(tD_1 \hspace{-1pt}+ \hspace{-1pt} D_2)^{p-1}(D_1)$, and so is a linear combination of Lie brackets of $D_1$ and $D_2$.}}
\end{enumerate}
A Lie algebra with a self-map  {$(-)^{[p]}$} satisfying these three conditions on 
 is called a \textit{restricted Lie algebra}.  {For more details on restricted Lie algebras see \cite{Jacobson_restricted}.}
 
 \vspace{4pt}

More generally, if $K \subset E \subset F$ is an intermediate field, we can consider  the $F$-linear injection
\begin{equation} \label{anchor1} \rho:  \Der_E(F) \hookrightarrow \Der_K(F).\end{equation}
Again, $\Der_E(F)$ carries a Lie bracket and a restriction   satisfying the above axioms. The \mbox{\textit{anchor map}   $\rho$} is compatible with   both bracket and restriction,  and  satisfies the following equation for all $\phi \in F$:
$$[X,\phi Y] = \phi [X,Y] + \rho(X)(\phi) \cdot Y$$
One might therefore think of   $(\Der_E(F)  \hookrightarrow \Der_K(F))$ as an \textit{$F/K$-restricted Lie algebroid}
whose  anchor \vspace{3pt}map happens to be  injective. \label{FKrestricted}

\hspace{-5pt} If $x^p\in K$ for \vspace{-3pt} all elements $x\in F$, this construction induces   \mbox{the following correspondence  \cite{MR11079}:}
\begin{theorem}[Jacobson] \label{Jacob}
Let $F/K$ be a finite purely inseparable field extension of \mbox{exponent one.}
There is an inclusion-reversing bijection  between  $F/K$-restricted Lie algebroids with injective anchor  map $(\mathfrak{g} \hookrightarrow \Der_K(F))$
and  intermediate field extensions $K \subset E \subset F$.

The Lie algebroid   corresponding to an intermediate field $E$ is \mbox{given by $(\Der_E(F) \hookrightarrow \Der_K(F)).$}

The field corresponding to an algebroid  $(\mathfrak{g} \hookrightarrow \Der_K(F))$ is  
\mbox{$ \{ x\in F \  | \  D(x) = 0 \mbox{ for  } D\in \mathfrak{g}\}\subset F.$}  
\end{theorem}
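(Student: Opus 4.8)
The plan is to deduce the correspondence from the Jacobson--Bourbaki theorem for the finite extension $F/K$, using a Poincar\'{e}--Birkhoff--Witt bound to control the one dimension count that is not purely formal. Since the anchor of $(\mathfrak g \hookrightarrow \Der_K(F))$ is required to be injective, such an object is, up to isomorphism, just an $F$-submodule $\mathfrak g \subseteq \Der_K(F)$ closed under the bracket and under the restriction $D \mapsto D^{\circ p}$, and I would work with these submodules throughout. The first, formal, point is that both assignments are well defined and reverse inclusions: for an intermediate field $E$ the submodule $\Der_E(F) \subseteq \Der_K(F)$ is visibly closed under bracket and restriction, and $E \subseteq E'$ gives $\Der_{E'}(F) \subseteq \Der_E(F)$; conversely $F^{\mathfrak g} := \{x \in F : D(x) = 0 \text{ for all } D \in \mathfrak g\}$ is a subfield containing $K$ (each $D$ is a $K$-linear derivation), and enlarging $\mathfrak g$ shrinks $F^{\mathfrak g}$. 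It then remains to show the two round trips are the identity.

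For $E \mapsto \Der_E(F) \mapsto F^{\Der_E(F)}$: the inclusion $E \subseteq F^{\Der_E(F)}$ is clear, and for the reverse inclusion I would use the theory of $p$-bases. Since $F/K$ has exponent one, so does $F/E$ (i.e.\ $F^p \subseteq E$), so any $a \in F \setminus E$ is $p$-independent over $E$, hence lies in some $p$-basis of $F$ over $E$, and the corresponding partial derivative $\partial_a \in \Der_E(F)$ satisfies $\partial_a(a) = 1$; thus $a \notin F^{\Der_E(F)}$ and $F^{\Der_E(F)} = E$. The same structure theory of $p$-bases also supplies the identity $\dim_F \Der_E(F) = \log_p [F:E]$ for every intermediate field $E$, which I will invoke below.

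For $(\mathfrak g \hookrightarrow \Der_K(F)) \mapsto E := F^{\mathfrak g} \mapsto \Der_E(F)$: the inclusion $\mathfrak g \subseteq \Der_E(F)$ holds by construction, so it suffices to prove $\dim_F \mathfrak g = \dim_F \Der_E(F)$. Choose an $F$-basis $D_1, \dots, D_m$ of $\mathfrak g$ and let $A \subseteq \End_K(F)$ be the unital $F$-subalgebra generated by $D_1, \dots, D_m$; it then automatically contains all multiplication operators $\ell_x = x \cdot \id$. Using the relation $D_i \circ \ell_x = \ell_x \circ D_i + \ell_{D_i(x)}$ to commute scalars to the left, together with the closure of $\mathfrak g$ under bracket ($[D_i, D_j] \in \sum_k F D_k$) and under restriction ($D_i^{\circ p} \in \sum_k F D_k$), a diamond-lemma induction on degree shows that the $p^m$ monomials $D_1^{\circ a_1} \circ \dots \circ D_m^{\circ a_m}$ with $0 \le a_i < p$ span $A$ as a left $F$-module, so $\dim_F A \le p^m$. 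Now $A$ is a finite-dimensional $F$-subalgebra of $\End_K(F)$ containing $\id$, so the Jacobson--Bourbaki theorem identifies it with $\End_{E_0}(F)$, where $E_0 = \{x \in F : \ell_x \text{ is central in } A\}$; since $A$ is generated by the $D_i$ this means $E_0 = \{x : D_i(x) = 0 \text{ for all } i\} = F^{\mathfrak g} = E$, and $[F:E] = \dim_F \End_E(F) = \dim_F A \le p^m$. Combining this with the previous paragraph gives $m = \dim_F \mathfrak g \le \dim_F \Der_E(F) = \log_p [F:E] \le m$, so all inequalities are equalities and $\mathfrak g = \Der_E(F)$, which finishes the argument.

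The step I expect to be the real obstacle is the Poincar\'{e}--Birkhoff--Witt bound, and specifically its bookkeeping: one must exploit the anchor term $\ell_{D_i(x)}$ in the Leibniz relation in order to push every field scalar to the left, and one must fix a term order and induct on total degree so that the repeated rewriting by brackets and by $p$-th powers genuinely terminates in standard monomials. Pleasantly, one does \emph{not} need those standard monomials to be $F$-linearly independent --- the upper bound $\dim_F A \le p^m$ together with Jacobson--Bourbaki and the tautological inclusion $\mathfrak g \subseteq \Der_E(F)$ already force equality --- which removes the most delicate part of a naive attack. The remaining ingredients, namely the Jacobson--Bourbaki correspondence and the structure theory of $p$-bases for exponent-one extensions, are classical.
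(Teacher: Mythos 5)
The paper does not prove this statement: it is quoted as classical background and attributed to Jacobson \cite{MR11079}, so there is no in-paper argument to compare against. Your proof is correct and is essentially the standard one for this theorem: the fixed-field computation via $p$-bases (using that $F^p\subseteq E$ forces any $a\in F\setminus E$ to be $p$-independent over $E$), and the reverse round trip via the Jacobson--Bourbaki correspondence combined with the restricted-PBW spanning bound $\dim_F A\le p^{\dim_F\mathfrak{g}}$, where you rightly observe that only the spanning (not the independence) of the ordered monomials is needed because the tautological inclusion $\mathfrak{g}\subseteq\Der_{F^{\mathfrak{g}}}(F)$ and the identity $\dim_F\Der_E(F)=\log_p[F:E]$ squeeze all the inequalities into equalities.
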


Jacobson's   theory extends from the generic fibre to give a geometric \mbox{correspondence \cite{rudakov_shafarevich, ekedahl}:}
\begin{theorem}[]
Let $X$ be a normal variety over an algebraically closed field \mbox{$k$ of  {characteristic $p>0$.}}  Then there is an inclusion-reversing bijection between saturated subsheaves of  $\mathcal{T}_X$ which are closed under   Lie bracket and $p$-powers (called \textit{foliations}), and morphisms  
$$X \rightarrow Y $$
with $Y$ normal factoring the absolute Frobenius $\Fr: X \rightarrow X$. 
\end{theorem}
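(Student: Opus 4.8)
The plan is to set up two mutually inverse constructions and to reduce the claim that they invert each other to the generic point of $X$, where it becomes exactly Jacobson's \Cref{Jacob}. To a factorisation $X\xrightarrow{f}Y\xrightarrow{g}X$ of $\Fr$ with $Y$ normal I would associate the relative tangent sheaf $\mathcal{T}_{X/Y}\subseteq\mathcal{T}_X$, i.e.\ the subsheaf of those derivations of $\mathcal{O}_X$ that annihilate $f^{-1}\mathcal{O}_Y$. A Lie bracket or a $p$-th power of derivations killing a subsheaf of rings again kills it (using $\chara k=p$), so $\mathcal{T}_{X/Y}$ is closed under bracket and restriction; and it is saturated because it is by construction the kernel of $\mathcal{T}_X\to\mathcal{H}om_{\mathcal{O}_X}(f^*\Omega_Y,\mathcal{O}_X)$, so that $\mathcal{T}_X/\mathcal{T}_{X/Y}$ embeds into a sheaf of the form $\mathcal{H}om(-,\mathcal{O}_X)$, which is torsion-free as $\mathcal{O}_X$ is. Thus $\mathcal{T}_{X/Y}$ is a foliation.

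In the other direction, to a foliation $\mathcal{F}\subseteq\mathcal{T}_X$ I would associate the subsheaf of rings $\mathcal{O}_X^{\mathcal{F}}\subseteq\mathcal{O}_X$ of functions annihilated by every local section of $\mathcal{F}$. The Leibniz rule gives $D(x^p)=p\,x^{p-1}D(x)=0$, so $\mathcal{O}_X^p\subseteq\mathcal{O}_X^{\mathcal{F}}\subseteq\mathcal{O}_X$; reading these inclusions on the common underlying space — the map $X\to Y:=\underline{\Spec}_X\,\mathcal{O}_X^{\mathcal{F}}$ being finite, radicial and surjective, hence a homeomorphism — produces a factorisation $X\to Y\to X$ of $\Fr$. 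Since $k$ is perfect, $\mathcal{O}_X$ is finite over $\mathcal{O}_X^p$, so $\mathcal{O}_X^{\mathcal{F}}$ is a finite $\mathcal{O}_X^p$-module and $Y$ is a variety. For normality I would, locally on an affine $\Spec R=U\subseteq X$, identify $\mathcal{O}_X^{\mathcal{F}}(U)=R\cap K(Y)$ inside $K(X)$ — using that $\mathcal{F}$ is coherent and torsion-free, so that $\mathcal{F}(U)$ spans the generic stalk $\mathcal{F}_\eta$ over $K(X)$ and the vanishing condition is detected there — and then observe that $R\cap K(Y)$ is integrally closed in $K(Y)$ because $R$ is integrally closed in $K(X)$.

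To see that the two constructions are mutually inverse, I would use that a saturated subsheaf of $\mathcal{T}_X$ is recovered from its generic stalk as $\mathcal{F}=\mathcal{T}_X\cap\mathcal{F}_\eta$ (intersection inside $\mathcal{T}_X\otimes_{\mathcal{O}_X}K(X)$), that a normal $Y$ carrying a factorisation of $\Fr$ is recovered from its function field as $\mathcal{O}_Y=\mathcal{O}_X\cap K(Y)$ (here normality of $Y$ enters, via finiteness of $\mathcal{O}_X$ over $\mathcal{O}_Y$), and that the identification above reads $\mathcal{O}_X^{\mathcal{F}}=\mathcal{O}_X\cap K(X)^{\mathcal{F}_\eta}$. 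Both round-trips then reduce to the following assertion at the generic point: $V\mapsto K(X)^V$ is an inclusion-reversing bijection between the $K(X)$-subspaces $V\subseteq\Der_k(K(X))$ closed under bracket and $p$-th power and the intermediate fields $K(X)^p\subseteq E\subseteq K(X)$, with inverse $E\mapsto\Der_E(K(X))$. But $K(X)/K(X)^p$ is a finite extension of exponent one, every $k$-derivation of $K(X)$ annihilates $K(X)^p$ so that $\Der_k(K(X))=\Der_{K(X)^p}(K(X))$, and a $p$-closed involutive $K(X)$-subspace of the latter is precisely a $K(X)/K(X)^p$-restricted Lie algebroid whose anchor is the (injective) inclusion; so this is exactly \Cref{Jacob} applied to $K(X)/K(X)^p$. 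Inclusion-reversal is then visible directly, since enlarging $\mathcal{F}$ imposes more equations and shrinks $\mathcal{O}_X^{\mathcal{F}}$.

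I expect the main obstacle to lie in the globalisation in the second construction: one must verify that $\mathcal{O}_X^{\mathcal{F}}$ is indeed, locally, the structure sheaf of a finite-type normal $k$-scheme $Y$, and that $X\to Y$ is a genuine morphism of varieties factoring $\Fr$ rather than a merely birational datum — the delicate point being the behaviour over the non-smooth locus of $X$, of codimension $\ge 2$ since $X$ is normal, which is exactly what the identification $\mathcal{O}_X^{\mathcal{F}}=\mathcal{O}_X\cap K(Y)$ together with Serre's normality criterion is designed to absorb. Everything else is either formal or a direct appeal to \Cref{Jacob}.
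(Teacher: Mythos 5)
This theorem appears in the paper only as background: it is stated without proof and attributed to Rudakov--Shafarevich and Ekedahl, so there is no in-paper argument to compare against. Your proposal is correct and is essentially the standard proof from those references: pass between a factorisation $X\to Y\to X$ of $\Fr$ and the saturated, bracket- and $p$-closed subsheaf $\mathcal{T}_{X/Y}\subseteq\mathcal{T}_X$, and conversely from a foliation $\mathcal{F}$ to the intermediate sheaf of rings $\mathcal{O}_X^{p}\subseteq\mathcal{O}_X^{\mathcal{F}}\subseteq\mathcal{O}_X$, then use that a saturated subsheaf is determined by its generic stalk and a normal $Y$ by its function field (via $\mathcal{O}_Y=f_*\mathcal{O}_X\cap K(Y)$) to reduce both round-trips to Jacobson's \Cref{Jacob} for $K(X)/K(X)^p$. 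The only steps I would spell out slightly more are: (i) that the generic stalk of $\mathcal{T}_{X/Y}$ is \emph{all} of $\Der_{K(Y)}(K(X))$ (any such derivation has a multiple preserving $\mathcal{O}_X$ because $\mathcal{T}_X$ is coherent with generic stalk $\Der_k(K(X))$, and that multiple still kills $\mathcal{O}_Y$); and (ii) that $\mathrm{Frac}(R\cap K(Y))=K(Y)$ (write $a/b=ab^{p-1}/b^p$ with $b^p\in R^p$), which is what makes your normality and "derivations killing $R\cap K(Y)$ kill $K(Y)$" claims go through. Both are routine, and the rest of your outline, including the finiteness of $\mathcal{O}_X^{\mathcal{F}}$ over $\mathcal{O}_X^p$ from $F$-finiteness and the identification of the generic-point correspondence with \Cref{Jacob}, is sound.
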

This extension of Jacobson's Galois theory to varieties has led to many further geometric applications, for example  \cite{shepherd-barron_geography_1991}, \cite{shen_foliations_2009}, \cite{di_cerbo_effective_2015}, \cite{langer2015generic}, \vspace{5pt} \cite{patakfalvi_singularities}, and \cite{ji_waldron}.  
 
A major drawback of Jacobson's theory is that it can only be applied to exponent one extensions. Indeed, if $F/K$ has exponent larger than one, the assignment $E \ \mapsto\  (\Der_E(F) \hookrightarrow \Der_K(F))$ can no longer distinguish between all intermediate fields,   and  \vspace{-2pt}Jacobson's \mbox{correspondence breaks down:}

\begin{example}
 {Let $F$ be a field of characteristic $p$. } 
 Then $\Der_{F^{p^n}}(F)=\Der_{F^p}(F)=\Der_{\mathbb{F}_p}(F)$, as for any derivation  $D\in\Der_{\mathbb{F}_p}(F)$, the Leibnitz rule implies that  
for all $x\in F$, we have 
$$D(x^p)=px^{p-1}D(x)=0.$$ Hence  derivations cannot tell  the extensions $F/F^p$ and $F/F^{p^n}$ apart.   {Note that if $X$ is a positive dimensional variety and $F=K(X)$ is its function field, $F/F^{p^n}$ is a purely inseparable extension of exponent exactly $n$, and so $F^{p^n}$ are distinct fields for distinct $n$. On the other hand, if $F$ is a perfect field, a similar argument shows that $\Der_{\mathbb{F}_p}(F)=0$.}
\end{example}

Jacobson's theory has  been extended to modular subextensions, i.e.\ subextensions of the form $$F = E[X_1]/(X_1^{p^{n_1}}-a_1) \otimes \ldots \otimes  E[X_k]/(X_k^{p^{n_k}}-a_k).$$
The key property of these extensions was established by Sweedler \cite{MR223343}, \mbox{who proved that} $F/E$ is modular  if and only if $E$ is the fixed field of  a set of Hasse--Schmidt \mbox{derivations $(D_0, \ldots, D_m)$ of $F$.} This fundamental fact was later used to study the Galois theory of modular extensions by  Chase \cite{CIS-333302},   \vspace{2pt} Heerema--Deveney \cite{heerema1974galois}, \mbox{Gerstenhaber--Zaromp \cite{MR266904}, and others.}

However, not all purely inseparable field extensions  are modular -- we recall the following simple example by Sweedler (cf.\ \cite[Example 1.1]{MR223343}):
\begin{example} 
For $K=\FF_p(x^p,y^p,z^{p^2})$ and $F= K[xz+y, z]$, the extension $F/K$ is not modular.  \vspace{2pt}
\end{example}

\subsection{Techniques}
In this article, we  establish a Galois correspondence for arbitrary finite purely inseparable field extensions.
To retain the  information lost by Jacobson's functor $$(K \subset E \subset F) \ \ \ \  \mapsto \ \ \ \  (\Der_E(F) \hookrightarrow \Der_K(F)),$$
we will construct a  \vspace{-2pt} refinement using the cotangent complex formalism in \vspace{3pt} derived algebraic geometry.

Recall that for any map of rings $A \rightarrow B$ , the \textit{cotangent complex} $L_{B/A}$ is the complex of $B$-modules 
 obtained by first resolving $B$ by  a free simplicial $A$-algebra $P_\bullet$,  then applying the K\"{a}hler differentials  functor $\Omega^1_{-/A}$ in each simplicial degree, and finally applying the Dold-Kan correspondence.
\mbox{The zeroth homology} group $\pi_0(L_{B/A} )$   is given by  $\Omega^1_{B/A}$. Note that here and everywhere else in this paper, we write $\pi_i(M)$ for the $i^{th}$ homology group of a chain complex, which is equal to the $i^{th}$ homotopy group of the associated module spectrum.

If $F/K$ is a finite field extension,  pick   $x_1,\ldots, x_n\in F$  such that  the following map  is surjective with kernel $I$:
$$K[X_1,\dots X_n] \xrightarrow{X_i \mapsto x_i} F.$$   {As we will see in \Cref{prop:cotangent_calculation},} the cotangent complex \vspace{-2pt} $L_{F/K}$ is then concentrated in two  \mbox{degrees, and given by}
$$ L_{F/K} = \left( \ldots  \rightarrow 0 \rightarrow  I/I^2 \xrightarrow{[i] \mapsto di \otimes 1} \Omega^1_{K[X_1,\ldots, X_n]/K} \otimes_{K[X_1,\ldots, X_n]}  F\right).  \vspace{-2pt}$$ 

 {If $F/K$ is separable,  then $L_{F/K}$ vanishes,  analogously to the vanishing of the usual Galois group for purely inseparable extensions.   Hence our Galois theory is perpendicular to the classical Galois theory of separable extensions, and from now on,  we shall assume that $F/K$ is purely inseparable.}

To refine Jacobson's functor \eqref{anchor1}, we will consider the assignment
\begin{equation} \label{anchor2} (K \subset E \subset F) \ \ \ \  \mapsto \ \ \ \  (L_{F/E}^\vee[1] \rightarrow L_{F/K}^\vee[1]),\end{equation}
where $(-)^\vee$ denotes the $F$-linear dual of a chain complex  \vspace{2pt} and $[1]$ is a homological shift by $+1$.

It is of course not enough to consider this\vspace{-1pt}  assignment \eqref{anchor2} merely as a functor to $(\Mod_F)_{/L_{F/K}^\vee[1]},$ the \mbox{$\infty$-category} of  $F$-linear chain complexes with a map to  $L_{F/K}^\vee[1]$, just like Jacobson's functor  did not just record $(\Der_E(F) \rightarrow \Der_K(F))$ as a map of $F$-vector spaces.\vspace{3pt}

Instead, we  equip  $(L_{F/E}^\vee[1] \hookrightarrow L_{F/K}^\vee[1])$ with a  kind of  derived restricted  Lie algebroid structure.  

To this end, we elaborate\vspace{0.4pt}  on the theory of \textit{partition Lie algebras}, which was introduced in  \cite{brantner2019deformation}  to study infinitesimal  \vspace{0.4pt}deformations in characteristic $p$.  More precisely,    for any field $F$, there is an equivalence\vspace{-0.2pt}  between $\moduli_F$, the $\infty$-category of formal moduli problems, and $\mathrm{Alg}_{\liep}$, the \mbox{$\infty$-category} of partition Lie algebras, \mbox{cf.\ \cite[Theorem 1.11]{brantner2019deformation}.} In [op.cit.], an additional subscript $(-)_{\Delta}$  highlights that we work  in the setting of simplicial commutative rings \mbox{(rather than $\EE_\infty$-rings).}

\begin{remark} In characteristic zero, partition Lie algebras are simply (shifted) differential graded Lie algebras, and \cite[Theorem 1.11]{brantner2019deformation} recovers   the Lurie--Pridham theorem, cf.\ \cite{lurie2011derivedX}\cite{pridham2010unifying}.   \end{remark}
Partition Lie algebras are defined by a monad $\liep$  on $\Mod_F$ satisfying the \mbox{following properties:} 
\begin{construction}[Partition Lie algebras]\label{pliecon}\

\begin{enumerate}
\item  $\liep$ commutes with sifted colimits, i.e.\ with 
filtered colimits and
\mbox{geometric realisations.}
\item If $V\in \Mod_F$ is   coconnective and  $\pi_i(V)$ is finite-dimensional for all $i$, then $\liep(V)$ is given by $L_{F/F\oplus V^\vee}^\vee[1]$, where 
$F \oplus
V^{\vee}$ denotes  the \vspace{2pt} trivial square-zero  \mbox{extension of $F$ by $V^\vee$.}\\
Note that any  $W\in \Mod_F$  is a sifted colimit of chain complexes $V$ of the above form. \vspace{2pt} 
\item  {If $V^\bullet$  is  a cosimplicial $F$-vector space with totalisation $V = \Tot(V^\bullet)$ , then} $$ \liep(V) \simeq \bigoplus_{n} \Tot \left(\widetilde{C}^\bullet(\Sigma |\Pi_n|^\diamond,F) \otimes (V^\bullet)^{\otimes n}\right)^{\Sigma_n}.  $$ 
 {Here $\widetilde{C}^\bullet(\Sigma |\Pi_n|^\diamond,F)$ are  the  $F$-valued cosimplices of the $n^{th}$ partition complex (cf.\ e.g.\ }\cite{arone2018action}),  {the functor $(-)^{\Sigma_n}$ takes strict fixed points,  and the tensor product is computed in cosimplicial $F$-modules.}
 
\item  {The  tangent fibre $\cot_F^\vee(R)=L_{F/R}^\vee[1]$
of any augmented simplicial commutative $F$-algebra $R\in \SCR_F^{\aug}$
carries  a canonical
$\liep$-algebra structure.   
Note that there is a natural equivalence $ \cot_F^\vee(R) \simeq  (F \otimes_R L_{R/F})^\vee$ as the 
the composite $F\to R\to F$
induces  a 
fibre sequence $F \otimes_R L_{R/F} \rightarrow L_{F/F} \rightarrow L_{F/R}$, and $L_{F/F}\simeq 0$ .}
 
\end{enumerate} 
\end{construction}
\begin{remark}
Under the natural grading conventions we  adopt, the homotopy groups $\pi_\ast(\mathfrak{g})$ of any partition Lie algebra  form a \textit{shifted}   {Lie algebra,   {which means that} the} bracket  $$[-,-]:  \pi_i(\mathfrak{g}) \times \pi_j(\mathfrak{g}) \rightarrow \pi_{i+j-1}(\mathfrak{g})$$  preserves $\pi_1(\mathfrak{g})$ and satisfies
$[x,y] = (-1)^{|x||y|}[y,x]$, as well as  the usual graded   Jacobi identity.
 {Shifted   Lie brackets are familiar from
 homotopy theory, where they appear as Whitehead products.} 
\end{remark}

\begin{example}\label{firstex}
For any field extension $K\subset F$, we can construct two different partition Lie algebras:
\begin{enumerate}
\item The tangent fibre of the representable $F$-formal moduli problem $\Spf(F \otimes_K F)$  {(given by the functor of points, see \Cref{ex:functor_points})}, which encodes deformations of the diagonal, is a partition Lie algebra over $F$ with underlying chain complex  $$L_{F/K}^\vee \simeq \cot(F \otimes_K F)^\vee.$$ 
\item Infinitesimal deformations of 
the $K$-scheme $\Spec(F)$ are encoded by a \mbox{Kodaira--Spencer  formal} moduli problem; the corresponding 
partition Lie algebra over $K$ has underlying\vspace{2pt}  \mbox{chain complex}  $$L_{F/K}^\vee[1].\vspace{2pt} $$
\end{enumerate}
\end{example}\ 

\subsection{Statement of Results} Fix\vspace{2pt}  a finite purely inseparable field extension $F/K$.\vspace{2pt}

We will \vspace{-1pt} construct a monad\vspace{1pt} $\LieAlgd^\pi_{F/K}$ acting on the $\infty$-category $(\Mod_F)_{/L_{F/K}^\vee[1]}$ of arrows \mbox{$M\rightarrow L_{F/K}^\vee[1]$}.
We \vspace{-1pt}   refer to $\LieAlgd^\pi_{F/K}$-algebras as \textit{$F/K$-partition Lie algebroids}, and \vspace{-1pt}  
denote the resulting $\infty$-category
by
$\Alg_{\LieAlgd^\pi_{F/K} }$\hspace{-2pt}. Given $(\mathfrak{g} \hspace{-1pt} \xrightarrow{\rho}\hspace{-1pt} L_{F/K}^\vee[1])  \hspace{-1pt} \in \hspace{-1pt} \Alg_{\LieAlgd^\pi_{F/K} }$\hspace{-3pt} , we \vspace{4pt} \mbox{call $\rho$ the \textit{anchor map}.}

 $F/K$-partition Lie algebroids are   derived generalisations of  the classical $F/K$-restricted Lie algebroids on p.\ \pageref{FKrestricted}.  \Cref{maincons} below will 
list the key  properties of  $\LieAlgd^\pi_{F/K}$ -- for now, let us simply record that for 
any simplicial commutative $K$-algebra $B$ with a map to $F$,  \mbox{the basic
arrow} $$\left(L_{F/B}^\vee[1] \rightarrow L_{F/K}^\vee[1]\right)$$
can be equipped with a canonical $F/K$-partition Lie algebroid structure $\mathfrak{D}(B)$. 

\begin{definition}[Galois partition Lie algebroids]
Given an intermediate field $E$ of the
 finite purely inseparable field extension $F/K$,  the  \textit{Galois partition Lie algebroid} is given by
$\mathfrak{gal}_{F/K}(E) := \mathfrak{D}(E);$ its
underlying object is given by the arrow 
of chain complexes   $( L_{F/E}^\vee[1] \rightarrow L_{F/K}^\vee[1])$.
\end{definition}

Our main result is the following  generalisation of Jacobson's correspondence to \mbox{arbitrary exponents:}

\begin{theorem}[Fundamental theorem of purely inseparable Galois theory]\label{fundamental_theorem} \ \label{mainintro} \\
Let $F/K$ be a finite purely  {inseparable field extension.}
Then there is a contravariant equivalence between the poset of  intermediate field extensions\vspace{-2pt} $$K \subset E \subset F\vspace{-2pt} $$
and  the $\infty$-category of $F/K$-partition Lie algebroids\vspace{-2pt}   $$\left(\mathfrak{g} \xrightarrow{\rho} L_{F/K}^\vee[1]\right)\vspace{-2pt} $$
 satisfying   the following conditions:\vspace{3pt}
\begin{enumerate}
\item Injectivity:\vspace{2pt} the anchor map $\rho$ induces an injection $\pi_1(\mathfrak{g}) \hookrightarrow \pi_1(L_{F/K}^\vee[1]) \cong \Der_K(F)$.
\item  Vanishing: $\pi_k(\mathfrak{g})=0$ for $k\neq 0,1$.\vspace{2pt}
\item Balance:\vspace{2pt} $\dim_F(\pi_0(\mathfrak{g})) = \dim_{F}(\pi_{1}(\mathfrak{g}))<\infty$.
\vspace{2pt}
\end{enumerate}

\noindent The partition\vspace{1pt} Lie algebroid $\mathfrak{gal}_{F/K}(E) = \mathfrak{D}(E)$  corresponding to an intermediate field $E$ has underlying object $(L_{F/E}^\vee[1]  \rightarrow L_{F/K}^\vee[1])\in (\Mod_F)_{/L_{F/K}^\vee[1]},$
while the  field $F^{\mathfrak{g}}$ corresponding to a partition Lie  algebroid satisfying $(1)-(3)$  is  given by its Chevalley--Eilenberg \mbox{complex $C^\ast(\mathfrak{g})$, cf.\     \Cref{constructadjunction}.}
\end{theorem}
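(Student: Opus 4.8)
The plan is to establish the correspondence by combining a Koszul-duality equivalence for the monad $\LieAlgd^\pi_{F/K}$ with a characterisation of the algebroids that arise from intermediate fields. Building on the partition Lie algebra formalism of \Cref{pliecon} and the properties of $\LieAlgd^\pi_{F/K}$ recorded in \Cref{maincons}, I would first show that $\mathfrak D$ and the Chevalley--Eilenberg functor $C^\ast$ of \Cref{constructadjunction} form a contravariant adjunction which restricts to an anti-equivalence between $\Alg_{\LieAlgd^\pi_{F/K}}$ and a suitable $\infty$-category of formal $F/K$-moduli problems, the algebroid analogue of \cite[Theorem 1.11]{brantner2019deformation}. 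In this dictionary, the injectivity condition (1) is a representability condition: $\mathfrak g\xrightarrow{\rho}L_{F/K}^\vee[1]$ satisfies (1) precisely when it is the tangent algebroid of a genuine simplicial commutative $K$-algebra $B$ equipped with a map to $F$, in which case $B=C^\ast(\mathfrak g)$ and $\mathfrak g\simeq\mathfrak D(B)=(L_{F/B}^\vee[1]\to L_{F/K}^\vee[1])$. (That (1) is necessary is automatic: the cofibre of the anchor of $\mathfrak D(B)$ is $(L_{B/K}\otimes_B F)^\vee[1]$, which has vanishing $\pi_{\geq 2}$, so $\pi_1$ of the anchor is always injective.) The problem then becomes: recognise, among pro-Artinian $K$-algebras $B$ over $F$, the intermediate fields of $F/K$, using conditions (2) and (3) on $\mathfrak D(B)$.

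\emph{Forward direction.} For $K\subseteq E\subseteq F$, a minimal generating set $x_1,\dots,x_r$ of $F$ over $E$ exhibits $F$ as a zero-dimensional complete intersection $E[X_1,\dots,X_r]/(f_1,\dots,f_r)$, so $L_{F/E}$ is computed by a two-term complex of free $F$-modules of rank $r$. Hence $L_{F/E}$ is concentrated in degrees $0$ and $1$ with finite-dimensional homotopy, and of Euler characteristic $r-r=0$, i.e. $\dim_F\pi_0L_{F/E}=\dim_F\pi_1L_{F/E}$. Dualising and shifting, $\mathfrak D(E)$ satisfies vanishing (2) and balance (3), while (1) holds since $\pi_1$ of its anchor is the tautological inclusion $\Der_E(F)\hookrightarrow\Der_K(F)$. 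For full faithfulness, the anti-equivalence identifies $\Map_{\Alg_{\LieAlgd^\pi_{F/K}}}(\mathfrak D(E'),\mathfrak D(E))$ with the space of $K$-algebra homomorphisms $E\to E'$ compatible with the inclusions into $F$; since $E,E'\subseteq F$ are subfields containing $K$, this space is contractible when $E\subseteq E'$ and empty otherwise. Therefore $E\mapsto\mathfrak D(E)=\mathfrak{gal}_{F/K}(E)$ is a contravariant fully faithful embedding of the poset of intermediate fields into the full subcategory of $\Alg_{\LieAlgd^\pi_{F/K}}$ cut out by (1)--(3).

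\emph{Backward direction (essential surjectivity).} Given $\mathfrak g$ satisfying (1)--(3), put $B:=C^\ast(\mathfrak g)$, so $\mathfrak g\simeq\mathfrak D(B)$ for a pro-Artinian local $K$-algebra $B$ over $F$; vanishing (2) says $L_{F/B}$ is concentrated in degrees $0$ and $1$, and balance (3) bounds its homotopy and forces $\dim_F\pi_0L_{F/B}=\dim_F\pi_1L_{F/B}<\infty$. I would argue that these hypotheses make $B$ discrete (via Chevalley--Eilenberg estimates from \Cref{pliecon} applied to a coconnective $\mathfrak g$ with $\pi_{\geq 2}=0$), finite over $K$ (the finiteness in (3) bounds the lengths of the Artinian quotients of $B$), and of Krull dimension $0$ with $B\to F$ injective --- here balance (3) is exactly what excludes positive-dimensional or non-reduced $B$, using that balance holds automatically for the purely inseparable extension $F/\mathrm{im}(B\to F)$. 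Hence $B=E$ is an intermediate field, $\mathfrak g\simeq\mathfrak D(E)=\mathfrak{gal}_{F/K}(E)$, and $C^\ast(\mathfrak g)\simeq E$; this identifies the inverse functor and completes the contravariant equivalence.

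The hard part will be the backward direction: deducing discreteness, finiteness, reducedness, and $0$-dimensionality of $B=C^\ast(\mathfrak g)$ from the purely homotopical hypotheses (1)--(3). This is where pure inseparability genuinely enters, as the analogous statements fail for separable or mixed extensions. I expect to handle it by dévissage along a tower $K=K_0\subseteq K_1\subseteq\dots\subseteq K_n=F$ of exponent-one layers (e.g. $K_i=F^{p^{n-i}}K$, with $n$ the exponent of $F/K$), using Jacobson's classical correspondence (\Cref{Jacob}) for each layer, a relative version of the Koszul-duality equivalence of the first paragraph to splice successive layers, and the convergence of the Chevalley--Eilenberg complex supplied by \Cref{pliecon}(3) to control $C^\ast(\mathfrak g)$ at each stage.
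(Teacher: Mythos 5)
Your overall architecture --- the Koszul duality adjunction $\mathfrak{D} \dashv C^\ast$ restricting to an anti-equivalence on suitably finite objects, followed by a characterisation of the essential image of the intermediate fields --- is exactly the paper's (\Cref{AffineKD} combined with \Cref{essentialimagetheorem}), and your forward direction (complete intersection presentation, Euler characteristic zero, identification of mapping spaces to see a poset) matches the paper's argument. One inaccuracy before the main point: condition (1) alone is not the representability condition. What \Cref{AffineKD} requires is that $\fib(\rho)$ be coconnective and of finite type; the paper derives coconnectivity from (1) \emph{and} (2) together, via the exact sequence $\pi_{n+1}(L_{F/K}^\vee[1]) \to \pi_n(\fib(\rho)) \to \pi_n(\mathfrak{g}) \to \pi_n(L_{F/K}^\vee[1])$, which needs $\pi_n(\mathfrak{g})=0$ for $n\geq 2$ as well as injectivity on $\pi_1$, and derives finite type from (2) and (3). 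Your observation that (1) is automatic for $\mathfrak{D}(B)$ is correct, but the converse you assert (that (1) alone characterises the representable algebroids) is not.

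The genuine gap is the backward direction, which you flag as the hard part and for which you propose a d\'{e}vissage along the exponent-one tower $K_i = F^{p^{n-i}}K$ together with Jacobson's correspondence (\Cref{Jacob}) on each layer. This is not the paper's route, and it faces a real obstruction: an arbitrary intermediate field, let alone an arbitrary $R \in \SCR_{K//F}^{\cN}$ produced by Koszul duality, does not decompose along that tower --- this is precisely the failure of modularity illustrated by Sweedler's example --- and Jacobson's correspondence cannot distinguish $F^p$ from $F^{p^n}$, so it cannot supply the splicing data you would need. What the paper actually does in \Cref{essentialimagetheorem} is a direct homological argument that $R = C^\ast(\mathfrak{g})$ is a field: writing $E$ for the residue field of $\pi_0(R)$, the functoriality of the fundamental cofibre sequence (\Cref{threemaps}) applied to $K \to R \to E \to F$ together with the vanishing condition gives $\pi_n(L_{E/R})=0$ for $n\geq 2$; Quillen's criterion ($\pi_2(L_{E/\pi_0(R)})=0$) then shows $\pi_0(R)$ is regular; an inductive Nakayama argument kills $L_{\pi_0(R)/R}$, so $R$ is discrete; and finally Cartier's equality (\Cref{Cartier}) for $E/K$ and $F/K$, an Euler-characteristic count in a six-term exact sequence, and the five lemma force $\mathfrak{m}/\mathfrak{m}^2 = 0$, i.e.\ $\dim \pi_0(R)=0$. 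Your intuition that balance (3) is what excludes positive-dimensional or non-reduced $R$ is right --- Cartier's equality is exactly the mechanism --- but the proposal as written contains no argument for discreteness, regularity, or zero-dimensionality, and the route you sketch for producing one would not go through.
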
\ \vspace{-15pt}
\begin{remark}
Note that we in particular assert that the full subcategory of $\Alg_{\Lie_{F/K}^\pi}$ spanned by all objects satisfying conditions $(1)-(3)$  in \Cref{mainintro} is the nerve of an  \vspace{2pt} ordinary category, which is in \mbox{fact a poset.}
\end{remark}

 {The proof of the correspondence proceeds as follows.  First we define partition Lie algebroids as algebras over a monad coming from an adjunction in \Cref{threeone}.  We then show that after restriction to certain subcategories, the adjunction induces an equivalence between certain partition Lie algebroids and intermediate complete local Noetherian objects in \Cref{KDsec}.  Finally we determine when these are fields using the conditions (1), (2), and (3) in \Cref{sec:fundamental_theorem}.}

 {By applying the theorem to the generic point of a variety, we immediately get a classification of purely inseparable morphisms of normal varieties.}

\begin{corollary}\label{geometric_cor}
 {Let $X$ be a normal variety over a perfect field $k$, with function field $F$.  Then purely inseparable $k$-morphisms to a normal variety $\pi:X\to Y$ of exponent at most $n$ are in bijection with $F/F^{p^n}$-partition Lie algebroids satisfying the conditions (1), (2) and (3) from \Cref{fundamental_theorem}.}
\end{corollary}

We will now record several key properties of the monad $\LieAlgd^\pi_{F/K}$:

\begin{construction}[Partition Lie algebroids]\label{maincons} \  
\begin{enumerate}
\item The functor $\LieAlgd^\pi_{F/K}$ commutes   with 
 filtered colimits and
geometric realisations.  
\item For any simplicial commutative $K$-algebra $B$ with a map to $F$, the basic
arrow $$\left(L_{F/B}^\vee[1] \rightarrow L_{F/K}^\vee[1]\right)$$
can be equipped with a canonical $F/K$-partition Lie algebroid structure $\mathfrak{D}(B)$.

\item The forgetful\vspace{1pt}  functor $\Alg_{\LieAlgd^\pi_{F/K}}  \rightarrow (\Mod_K)_{/L_{F/K}^\vee[1]}$
sending $(\mathfrak{g} \rightarrow L_{F/K}^\vee[1])$ to the underlying object in 
$(\Mod_K)_{/L_{F/K}^\vee[1]}$ 
lifts  {canonically} to a sifted-colimit-preserving 
functor
 $$U: \Alg_{\LieAlgd^\pi_{F/K}} \rightarrow (\Alg_{\Lie^\pi_{K}})_{/L_{F/K}^\vee[1]},$$
where $L_{F/K}^\vee[1]$  is the $K$-partition Lie algebra of \Cref{firstex} (2).

\item The fibre functor $ \fib: \Alg_{\LieAlgd^\pi_{F/K}}\rightarrow (\Mod_F)_{L_{F/K}^\vee/}$ sending $(\mathfrak{g} \xrightarrow{\rho} L_{F/K}^\vee[1])$ to $\fib(\rho)$   lifts   {canonically} to a sifted-colimit-preserving 
functor
$$ \Alg_{\LieAlgd^\pi_{F/K}} \rightarrow (\Alg_{\Lie^\pi_{F}})_{L_{F/K}^\vee/}, $$
where $L_{F/K}^\vee$  is the $F$-partition Lie \vspace{3pt} algebra of \Cref{firstex} (1).\\ Very informally,\vspace{4pt} the anchor map $\rho$ measures the failure of
the bracket on 
$\mathfrak{g} $ to be $F$-linear.

\item  {
We say that an object $(V\xrightarrow{} L_{F/K}^\vee[1]) \in (\Mod_{F})_{/L^\vee_{F/K}[1]}$ has  a \textit{vanishing anchor map}
if the associated map $(V\xrightarrow{} L_{F/K}^\vee[1])$ in $\Mod_F$   is nullhomotopic (in $\Mod_F$). }

Given an object $(V\xrightarrow{0} L_{F/K}^\vee[1]) \in (\Mod_{F})_{/L^\vee_{F/K}[1]}$ with vanishing anchor map, there is an equivalence
$$\LieAlgd^\pi_{F/K}(V\xrightarrow{0} L_{F/K}^\vee[1])  \ \simeq  \  ( \liep(V) \rightarrow L_{F/K}^\vee[1]).$$ 

Note that any \vspace{0pt}   $(W\rightarrow L_{F/K}^\vee[1])$ in $(\Mod_F)_{/L_{F/K}^\vee[1]}$ is the 
geometric realisation of a simplicial diagram  of   objects \mbox{$ (V\xrightarrow{0} L_{F/K}^\vee[1])$} with vanishing anchor \vspace{2pt}   \mbox{maps and $V$ coconnective.}

\end{enumerate}
\end{construction}

\begin{remark}
It might be surprising that nonzero maps can be colimits of zero maps. However,   this phenomenon is standard in homotopy theory. For instance, the identity map on $S^2$ appears as a (homotopy) pushout of a map from the diagram $\ast \leftarrow S^1 \rightarrow \ast$ to the constant \mbox{diagram 
$S^2\leftarrow S^2 \rightarrow S^2$}, in spite of the maps $\ast \rightarrow S^2$ and $S^1 \rightarrow S^2$ being nullhomotopic.\vspace{+2pt} 
\end{remark}

A finite purely inseparable field extension $F/K$ is simple precisely if $$\dim_F(\Omega^1_{F/K})=1.$$  
Using this, we show that modular extensions can be characterised using their partition Lie algebroids. 

\begin{proposition}
	Let $F/K$ be a finite purely inseparable extension. 
	Then $F/K$ is modular precisely if there  are finitely many $F/K$-partition Lie algebroids 
	$$\rho_i:\mathfrak{g}_i\to L_{F/K}^\vee[1]$$
	such that the following conditions hold:\vspace{3pt}
	\begin{enumerate}
		\item each $ \mathfrak{g}_i $ satisfies \vspace{2pt}conditions $(1)-(3)$ of Theorem \ref{fundamental_theorem};
		\item  $\dim_F(\pi_0(\fib(\rho_i)))=1$ for each $i$; \vspace{2pt}
		\item the  \vspace{5pt}canonical map $L_{F/K}^\vee\rightarrow \oplus_i\fib(\rho_i)$ is an equivalence in $\Mod_F$.
	\end{enumerate} 
\end{proposition}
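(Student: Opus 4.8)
The plan is to bootstrap from the Fundamental Theorem (\Cref{fundamental_theorem}) by translating the three listed conditions into a decomposition statement about intermediate fields, and then matching this against Sweedler's characterisation of modular extensions as tensor products of simple extensions. First I would use the equivalence of \Cref{fundamental_theorem} to replace each $\rho_i : \mathfrak{g}_i \to L_{F/K}^\vee[1]$ satisfying condition (1) by an intermediate field $E_i$ with $\mathfrak{g}_i \simeq \mathfrak{gal}_{F/K}(E_i) = \mathfrak{D}(E_i)$, so that the underlying arrow is $(L_{F/E_i}^\vee[1] \to L_{F/K}^\vee[1])$. Then $\fib(\rho_i) \simeq \fib(L_{F/E_i}^\vee[1] \to L_{F/K}^\vee[1])$; using the cofibre sequence $L_{E_i/K}\otimes_{E_i} F \to L_{F/K} \to L_{F/E_i}$ and $F$-linear duality, one identifies $\fib(\rho_i)$ with $(L_{E_i/K}\otimes_{E_i}F)^\vee$, and in particular $\pi_0(\fib(\rho_i)) \cong \Omega^1_{E_i/K}\otimes_{E_i} F$. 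So condition (2), $\dim_F \pi_0(\fib(\rho_i)) = 1$, says exactly that $\dim_{E_i}\Omega^1_{E_i/K} = 1$, i.e.\ by the remark preceding the proposition that $E_i/K$ is a \emph{simple} purely inseparable extension, $E_i = K(t_i)$ with $t_i^{p^{n_i}} \in K$.

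Next I would unpack condition (3). The canonical map $L_{F/K}^\vee \to \bigoplus_i \fib(\rho_i) \simeq \bigoplus_i (L_{E_i/K}\otimes_{E_i}F)^\vee$ is, after dualising, a map $\bigoplus_i L_{E_i/K}\otimes_{E_i} F \to L_{F/K}$ assembled from the transitivity cofibre sequences. Because all these cotangent complexes are (for finite purely inseparable extensions) connective with homotopy concentrated in degrees $0$ and $1$, and since the relevant higher homotopy can be controlled, I claim this is an equivalence if and only if it is an isomorphism on $\pi_0$, i.e.\ the natural map $\bigoplus_i \Omega^1_{E_i/K}\otimes_{E_i} F \to \Omega^1_{F/K}$ is an isomorphism of $F$-vector spaces. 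With $E_i = K(t_i)$ this is precisely the statement that the images $d t_1, \dots, d t_r$ form an $F$-basis of $\Omega^1_{F/K}$. By the standard structure theory of purely inseparable extensions (a $p$-basis argument), choosing $t_i$ with $dt_i$ a basis of $\Omega^1_{F/K}$ yields $F = K(t_1,\dots,t_r)$ and, provided the exponents are chosen minimally, $F = \bigotimes_{i} K(t_i) = \bigotimes_i K[X_i]/(X_i^{p^{n_i}} - a_i)$ — which is exactly Sweedler's definition of a modular extension. Conversely, given a modular extension $F = \bigotimes_i K[X_i]/(X_i^{p^{n_i}}-a_i)$, the intermediate fields $E_i := K(x_i)$ furnish algebroids $\mathfrak{g}_i := \mathfrak{D}(E_i)$ satisfying (1)--(3): conditions (1) and (2) and the balance condition hold by \Cref{fundamental_theorem} since $E_i$ is an intermediate field, condition (2) of the proposition holds since $E_i/K$ is simple, and condition (3) holds because for a tensor-product decomposition the Kähler differentials split as $\Omega^1_{F/K} = \bigoplus_i \Omega^1_{E_i/K}\otimes_{E_i} F$ and the cotangent complexes are concentrated in degree $0$ (a complete intersection computation), so the dual map is an equivalence.

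Carrying this out, the steps in order are: (i) invoke \Cref{fundamental_theorem} to pass from algebroids to intermediate fields $E_i$; (ii) identify $\fib(\rho_i)$ with $(L_{E_i/K}\otimes_{E_i}F)^\vee$ via transitivity and duality, so that (2) $\Leftrightarrow$ $E_i/K$ simple; (iii) reduce condition (3) to the $\pi_0$-statement that $\{dt_i\}$ is an $F$-basis of $\Omega^1_{F/K}$, checking that the higher cotangent homotopy does not obstruct this — here one uses that each $L_{E_i/K}$ has $\pi_1$ of dimension $n_i - 1$ over $E_i$ and that these assemble correctly; (iv) deduce $F = \bigotimes_i K(t_i)$ from the basis statement plus minimality of exponents, matching Sweedler's definition; (v) prove the converse by the explicit complete-intersection computation of $L_{F/K}$ for a modular extension. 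The main obstacle is step (iii): condition (3) is a statement about the \emph{full} dual cotangent complex, not just $\pi_0$, so I must verify that when the $\pi_0$ maps match up, the $\pi_1$ parts do as well — equivalently that $\bigoplus_i L_{E_i/K}\otimes_{E_i}F \to L_{F/K}$ being a $\pi_0$-isomorphism forces it to be an equivalence. This should follow from a careful bookkeeping of dimensions: a counting argument shows $\dim_F \pi_1(L_{F/K}) = \sum_i \dim_F \pi_1(L_{E_i/K}\otimes_{E_i}F)$ exactly when $F$ is the tensor product, using that for a purely inseparable extension presented with $n$ generators and $n$ relations one has $\dim_F\pi_1(L_{F/K}) = n - \dim_F\Omega^1_{F/K}$ controlled by the relation module, but one must rule out degenerate situations where $\pi_0$ matches while $\pi_1$ does not (e.g.\ when the $t_i$ generate but with redundant relations). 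I expect this to require the genuine input of Sweedler's theorem rather than a purely formal cotangent-complex manipulation.
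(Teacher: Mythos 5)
Your forward direction (modular $\Rightarrow$ the three conditions) and your translation of condition (2) into simplicity of $E_i/K$ via $\fib(\rho_i)\simeq (F\otimes_{E_i}L_{E_i/K})^\vee$ are correct and agree with the paper, which cites \cite[09DA]{stacks-project} for the splitting $L_{F/K}\simeq\oplus_i(F\otimes L_{E_i/K})$. The converse, however, has a genuine gap, and it sits exactly where you flagged your ``main obstacle.'' First, the proposed reduction of condition (3) to the $\pi_0$-statement that the $dt_i$ form an $F$-basis of $\Omega^1_{F/K}$ is false, and Sweedler's example in the introduction is a counterexample: for $K=\FF_p(x^p,y^p,z^{p^2})$ and $F=K(u,v)$ with $u=xz+y$, $v=z$, the classes $du,dv$ do form a basis of $\Omega^1_{F/K}$, yet $F$ is not modular ($[F:K]=p^3$ while $K(u)\otimes_K K(v)$ has dimension $p^4$); on $\pi_1$ one computes $u^{p^2}-c\equiv x^{p^2}(v^{p^2}-z^{p^2})$ modulo $I^2$, so both $\pi_1(F\otimes L_{E_i/K})$ land in the same line of $\pi_1(L_{F/K})$ and the map degenerates. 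So condition (3) genuinely uses $\pi_1$, and a dimension count cannot substitute for it, since a map of $F$-vector spaces of equal dimension need not be an isomorphism.

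Second, even granting the full equivalence $\oplus_i F\otimes L_{E_i/K}\xrightarrow{\sim}L_{F/K}$, you give no argument that this forces $F\cong\otimes_K K(t_i)$; appealing to ``the genuine input of Sweedler's theorem'' does not obviously help, as that theorem characterises modularity via Hasse--Schmidt derivations, not via the cotangent complex. The paper's route is different and is the real content of the proof: form the \emph{ring} $R=\otimes_K E_i$ (not yet known to be a field), use \cite[09DA]{stacks-project} to identify $F\otimes_R L_{R/K}$ with $\oplus_i F\otimes L_{E_i/K}$, so that condition (3) says precisely that $L_{F/R}\simeq 0$; then show the residue field $F'$ of $R$ equals $F$ (from $\Omega^1_{F/F'}=0$ one gets $F=F'F^p$ and iterates using finite exponent), show $R$ is complete local Noetherian, and finally invoke the Koszul duality equivalence of \Cref{AffineKD} to conclude from $\mathfrak{D}(R)\simeq\mathfrak{D}(F)$ that $R\simeq F$. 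You would need to supply an argument of this kind (or an equivalent one) to close the converse.
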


Our construction  of $F/K$-partition Lie algebroids   is higher categorical in nature,  but one can also construct simplicial-cosimplicial models by generalising \cite[Theorem 5.33]{brantner2021pd}  to Lie algebroids. 
In our   forthcoming paper  \cite{instance},   we will  study these concrete models in more detail, and  explore geometric applications of our Galois correspondence to algebraic foliations and Brauer groups.\\

\subsection*{Acknowledgments} 
It is a pleasure to thank Omar Antol\'{i}n--Camarena, 
Luc Illusie, Minhyong Kim, and Joost Nuiten for inspiring conversations, as well as 
Bhargav Bhatt, who introduced  the  two coauthors at MSRI, where they were supported by the National Science Foundation under Grant No.\ DMS-1440140 during the parallel Spring 2019 semester programmes  on Derived Algebraic Geometry and on Birational Geometry and Moduli Spaces, respectively.  We also wish to thank the anonymous referee for their helpful comments on an earlier version of this paper.

This work was supported by a grant from the Simons Foundation (850684, JW).

The first author wishes to thank Merton College and the Mathematical Institute at Oxford University for their support, while the second author wishes to thank Princeton \mbox{University for its support.}

\newpage

\section{Preliminaries}
To set the stage, we  briefly review some   basic notions of derived \mbox{algebraic geometry,} \mbox{including} chain complexes, simplicial commutative rings, and Andr\'{e}--Quillen's
cotangent \mbox{complex formalism.}
We will not attempt to give  a comprehensive treatment -- the aim of this section is merely to introduce the reader to several necessary ideas, and provide references which  allow them to find the details,    in particular  within the references \cite{lurie2009higher}, \cite{lurie2011derivedX}, \cite[Chapter 25]{lurie2016spectral}, and \cite{lurie2014higher}.

The language of $\infty$-categories, also known as quasi-categories, provides an essential tool for us, which we  will use freely. These objects were first defined by Boardman and Vogt \cite{MR0420609},  explored further by Joyal \cite{MR1935979},  and  treated comprehensively in the seminal  work of Lurie \cite{lurie2009higher}.  In particular, we will use the terms ``limit" and ``colimit"   in the $\infty$-categorical sense. Hence, these correspond to   ``homotopy limits" and ``homotopy colimits" in the more \vspace{-2pt} classical, \mbox{model categorical sense.}

\subsection{Chain complexes\vspace{-2pt}}
Let $R$ be an ordinary commutative ring.
\begin{notation}[Chain complexes] We write $\mod_R$ for the derived $\infty$-category \mbox{of the ring $R$}. \mbox{Its objects} are represented by chain complexes  of $R$-modules  $ \ldots \rightarrow M_1 \rightarrow M_0 \rightarrow M_{-1} \rightarrow \ldots$ (cf.\ \cite[Definition 1.3.5.8]{lurie2014higher}), or   equivalently by $R$-module spectra \cite[Remark 7.1.1.16]{lurie2014higher}).\vspace{-2pt} 
\end{notation}
Given $M\in \Mod_R$, let $\pi_i(M) $ be the $i^{th}$ homology group of the corresponding chain  complex, 
or equivalently the  $i^{th}$ homotopy group  of the corresponding $R$-module spectrum.\vspace{-1pt}

\begin{remark}[Basic properties of $\Mod_R$]\label{basicprop}
 We recall several basic facts from \cite[Chapter 1, 7]{lurie2014higher}.
\begin{enumerate}
\item The homotopy category $h\Mod_R$ of $\Mod_R$ is equivalent to the classical derived \mbox{category of $R$.} In particular, $\Mod_R$ should not be confused with the category of ordinary $R$-modules. However, $\Mod_R$ can be equipped with a natural $t$-structure, the heart of which recovers ordinary $R$-modules as chain complexes concentrated in \mbox{degree zero (cf.\ \cite[Definition 7.1.1.13]{lurie2014higher}).}
\item The full subcategory $\Mod_{R,\geq 0}$ of connective objects for this $t$-structure consists \mbox{of all $M$ with} $\pi_i(M)=0$ for $i<0$, i.e.\ by all chain complexes with vanishing \mbox{homology in negative degrees.}
\item In fact,  $\Mod_{R,\geq 0}$ can   be
obtained by freely adjoining sifted colimits, 
i.e.\ filtered colimits and geometric realisations, 
 to the  ordinary category of   finitely generated free $R$-modules. To this end, we use  the $\mathcal{P}_\Sigma$-construction, which sends an $\infty$-category  $\mathcal{C}$ to the  $\infty$-category $\mathcal{P}_\Sigma(\mathcal{C})$ of finite-product-preserving functors from $\mathcal{C} ^{\op}$ to   \mbox{spaces,  c.f.\    \cite[Section 5.5.8]{lurie2009higher}.}
\item 
The $\infty$-category $\mod_R$ admits a symmetric monoidal structure denoted by $\otimes_R$ (cf.\ \cite[Section 4.5.2]{lurie2014higher}), which  computes the  derived  tensor product of chain complexes over $R$, and is traditionally  denoted by $\otimes^L_R$.  This product preserves small colimits in each entry. 
\end{enumerate}
\end{remark} 
We will often need two finiteness properties of chain complexes. For simplicity, we will only spell them out in the generality needed for our later arguments.\vspace{-2pt} 
\begin{definition}[Finiteness properties in $\Mod_R$] \label{modulefinite}
Let $R$ be an ordinary \vspace{1pt} Noetherian \mbox{commutative ring.} A chain\vspace{-2pt} complex $M\in \Mod_R$ is said to be 
\begin{enumerate}
\item \textit{perfect} if it can be represented by a finite complex of finitely generated free $R$-modules  {\mbox{(cf.\ \cite[Definition 7.2.4.1]{lurie2014higher})}};
\item \textit{almost perfect} if it is bounded below and each $\pi_i(M)$ is a finitely generated $R$-module  {\mbox{(cf.\ \cite[Definition 7.2.4.10 and Proposition 7.2.4.17]{lurie2014higher} or} also \cite[I. 2]{berthelot225seminare} or \cite[066E]{stacks-project}, where this notion  is called `pseudo-coherent'});
\item \textit{of finite type} if each $\pi_i(M)$ is a finitely generated $R$-module. 
\end{enumerate} 
Write $\Pperf_R, \APperf_R, \Mod_R^{\ft} \subset \Mod_R$ for the full subcategories spanned by these  families of modules.
\end{definition} 

\begin{remark} One can also develop this theory for $R$ equal to the sphere spectrum, in which case we  recover the symmetric monoidal $\infty$-category $\Sp$ of spectra. Any ordinary ring gives rise to a commutative algebra object in $\Sp$, its Eilenberg-MacLane spectrum, and $\Mod_R$ is equivalent to the $\infty$-category of modules over this $\EE_\infty$-ring. We will not need this perspective in our work.
\end{remark}

\subsection{Simplicial commutative rings}
The affine objects in derived algebraic geometry are given by simplicial commutative rings. Their
homotopy theory  was first studied by Quillen \cite{Quillen1}, who constructed a cofibrantly generated  model structure on this category. The  $\infty$-category obtained by inverting weak equivalences admits a concise presentation, c.f.\  \cite[Definition 25.1.1.1]{lurie2016spectral}: 
\begin{definition}[Simplicial Commutative Rings]
Let $R$ be an ordinary commutative ring, and write $\Poly_R$ for (the nerve of) the category of finitely generated \mbox{polynomial $R$-algebras $R[x_1,\ldots, x_n]$.}
 The \emph{$\infty$-category of simplicial commutative $R$-algebras} is given by
 $$\SCR_R := \mathcal{P}_\Sigma(\Poly_R ^{\op}),$$
that is, by  the $\infty$-category of all finite-product-preserving functors from $\Poly_R ^{\op}$ to \mbox{spaces.}
\end{definition}	
A detailed $\infty$-categorical treatment  of simplicial commutative rings is given in \mbox{\cite[Chapter 25]{lurie2016spectral}.} 
\begin{remark}[Basic properties of $\SCR_R$] We review several basic facts.
	\begin{enumerate}
		\item The $\infty$-category $\SCR_R$ is obtained from $\Poly_R$ by \mbox{formally} adjoining sifted colimits, i.e.\ filtered colimits and geometric realisations (c.f.\    \cite[Section 5.5.8]{lurie2009higher}). It is presentable, and the Yoneda embedding 
$\Poly_R\hookrightarrow \SCR_R$ preserves coproducts. The image of this embedding consists of compact objects, which generate $\SCR_R$ under  sifted colimits.
		\item Every  $B\in \SCR_R$
 has an underlying (connective) \mbox{spectrum,} which can in fact be equipped with the
 structure of an $\EE_\infty$-$R$-algebra in a canonical way.
\item  Hence, we can take the homotopy groups $\pi_*(B)$ of any $B\in\SCR_R$, and 
each $\pi_i(B)$ is a module over the ordinary commutative ring $\pi_0(B)$. If we model $B$ by an ordinary simplicial commutative $R$-algebra, $\pi_i(B)$ is the $i^{th}$ homology of the chain complex obtained by applying \mbox{the Dold-Kan correspondence to $B$,} thought of as a simplicial $B$-module.
\item If $R=\ZZ$, we  refer to  $\SCR := \SCR_\ZZ$ as the $\infty$-category of simplicial commutative rings; 
for any ring $R$, there is a natural forgetful functor $\SCR_R \rightarrow \SCR$. 
\item More generally, given a map of rings $R\rightarrow S$, there is a   forgetful functor \mbox{$\SCR_S \rightarrow \SCR_R$}. Its left adjoint  is computed by $R \otimes_S (-)$ on the level of modules.
	\end{enumerate}
\end{remark}

\begin{notation}
Given $S\in \SCR_R$, write $\SCR_{R//S} := (\SCR_R)_{/S}$ for the overcategory    (cf.\ \cite[Section 1.2.9]{lurie2009higher})  consisting of simplicial commutative $R$-algebras with a map to $S$. When $R=S$, we obtain the $\infty$-category	of augmented simplicial commutative $R$-algebras \vspace{-2pt}$\SCR_{R}^{\aug} := \SCR_{R//R}$.
\end{notation}

The following class of  simplicial commutative rings will play a key role in our arguments:
\begin{definition}[Complete local Noetherian objects]\label{cNdef}
	A simplicial commutative ring  $B\in \SCR$ is \textit{complete local Noetherian} if   $\pi_0(B)$ is  a complete local Noetherian ring  and $\pi_i(B)$ is a finitely generated $\pi_0(B)$-module for all $i\geq 0$.
	Write $\SCR^{\cN}\subset \SCR$,  $\SCR_R^{\cN}\subset \SCR_R$, \mbox{$\SCR_{R//S}^{\cN}\subset  \SCR_{R//S}$}  for the full subcategories spanned by  \vspace{-2pt}  complete local \mbox{Noetherian rings.}
\end{definition}

\subsection{Algebraic Andr\'{e}--Quillen homology}
The homology of simplicial commutative rings was introduced by Andr\'{e} \cite{andre1974homologie} and Quillen \cite{quillen1970co}, and may be thought of as the nonabelian derived functor of the construction which sends an augmented algebra to its indecomposables.

To give a more formal construction, we first introduce its right adjoint:
\begin{definition}[Trivial square-zero extensions] Let $R$ be an ordinary commutative ring. The  \textit{trivial square-zero extension functor}  $\sqz_{R}$ is the unique sifted-colimit-preserving functor  \vspace{-2pt}
$$\sqz_{R}:\Mod_{R,\geq 0}\to \SCR^{\aug}_{R} \vspace{-0pt}$$
sending a
finite free $R$-module $N$ to the chain  complex $R\oplus N$ with multiplication \[(r_1,n_1)\cdot (r_2,n_2) = (r_1 r_2, r_1 n_2 + r_2 n_1).\] Note that  the underlying chain complex of $\sqz_R(M)$ is always given by $R\oplus M$.\vspace{4pt}

 {This construction can be extended to a trivial square-zero extension functor for connective  modules over   simplicial commutative rings, see \cite[Section 25.3.1]{lurie2016spectral}.  To this end, we consider the $\infty$-category $\mathrm{SCRMod}^{\mathrm{cn}}$ whose objects are pairs $(R,M)$ where $R$ is a simplicial commutative ring and $M$ is a connective $R$-module (cf.\ \cite[Notation 25.2.1.1]{lurie2016spectral}), and the full subcategory 
$\mathcal{C}\subset \mathrm{SCRMod}^{\mathrm{cn}} $ spanned by pairs consisting of a polynomial ring $A=\mathbb{Z}[x_1,..,x_n]$ and a  finite free $A$-module.   
By \cite[Proposition 25.2.1.2]{lurie2016spectral}, there is an equivalence  $\mathrm{SCRMod}^{\mathrm{cn}}\simeq \mathcal{P}_{\Sigma}(\mathcal{C})$ between the sifted completion of $\mathcal{C}$ (cf.\ \cite[Definition 5.5.8.8]{lurie2009higher})
 and $\mathrm{SCRMod}^{\mathrm{cn}}$.
Using the universal property of the $\mathcal{P}_{\Sigma}$-construction in \cite[Proposition 5.5.8.15]{lurie2009higher}, one  obtains a sifted-colimit-preserving functor $\mathrm{SCRMod}^{\mathrm{cn}} \rightarrow \SCR$, $(R, M) \mapsto \sqz_R(M)$  which extends the classical square-zero extension\vspace{4pt} functor.}

More generally, let $R\rightarrow S$ be a map of ordinary commutative rings, the    \textit{relative trivial square-zero} extension functor $\sqz_{R//S}:\Mod_{S,\geq 0}\to  {\SCR_{R//S}}$ sends  $M\in \Mod_{S,\geq 0}$ to $\sqz_S(M)$, considered as a simplicial commutative $R$-algebra via restriction along the map $R\rightarrow S$.

\end{definition}
Here, we have used the universal property of $\Mod_{R,\geq 0}$ discussed in \Cref{basicprop}(3).
\begin{notation}
We will often write $\sqz_R(M) = R\oplus M$ and $\sqz_{R//S}(M) = S \oplus M$.
\end{notation}
We can now introduce  {Andr\'{e}--Quillen's homology functor:
 \begin{definition}[Cotangent fibre] \label{cotfib}
For $R\in \SCR$, the  \mbox{\textit{cotangent fibre functor}}
$$\cot_R: \SCR_{R}^{\aug} \rightarrow \Mod_{R, \geq 0} $$
is given by the left adjoint to the trivial square-zero extension functor $\sqz_R$.

Given a map $R\rightarrow S$, the left adjoint of $\sqz_{R//S}$ gives a relative 
version of this functor \mbox{denoted by}
$$\cot_{R//S}:  {\SCR_{R//S}} \rightarrow \Mod_{S, \geq 0}.$$
\end{definition}
Since $\sqz_{R//S}$ is a composite of right adjoints, we can write $\cot_{R//S}$ as a composite of left adjoints $$\cot_{R//S}(B) \simeq \cot_S(S \otimes_R B) .$$

 {We will describe cotangent fibres in terms of cotangent complexes in the following \Cref{acc}.}

 \begin{remark}
The cotangent fibre functor above is often decorated with a subscript $(-)_\Delta$, to indicate that 
we are working over  simplicial commutative rings, rather than $\EE_\infty$-rings. As we only use one of these versions, we will drop this subscript from our notation.\end{remark}

The following result links two finiteness conditions for modules in \Cref{modulefinite}  and rings in \Cref{cNdef}, and 
 follows from 
\cite[Proposition 3.1.5]{lurie2004derived} and 
\cite[Proposition 3.2.14]{lurie2004derived}:
\begin{proposition}\label{cnoethcot} 
	If $A \in \SCR_F^{\aug}$ is Noetherian, then $\cot(A) \in \Mod_{F,\geq 0}^{\ft}$ is connective  of \mbox{finite type.}
\end{proposition}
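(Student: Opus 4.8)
The plan is to reduce the statement to a concrete presentation and then check both finiteness claims degree by degree. Recall that $A \in \SCR_F^{\aug}$ being Noetherian means (in the sense of \Cref{cNdef}, relativised to $F$) that $\pi_0(A)$ is a Noetherian ring and each $\pi_i(A)$ is a finitely generated $\pi_0(A)$-module. Since $A$ is augmented over $F$, we have $\cot(A) = \cot_F(A) \simeq L_{F/A}[?]$ — more precisely, writing the augmentation $\epsilon : A \to F$, the cotangent fibre sits in a (co)fibre sequence relating $L_{A/F}$, $L_{F/F} = 0$, and the base change of $L_{A/F}$ along $\epsilon$, so that $\cot_F(A) \simeq F \otimes_A L_{A/F}[1]$ up to the appropriate shift convention; in any case $\cot_F(A)$ is computed from the $F$-module $L_{A/F}$ by base change along a surjection-like map and a shift. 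The two things to establish are: (i) connectivity, i.e. $\pi_i(\cot(A)) = 0$ for $i < 0$; and (ii) finite type, i.e. each $\pi_i(\cot(A))$ is a finite-dimensional $F$-vector space (finitely generated over $F$, which is a field).

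First I would handle connectivity. The functor $\cot_F : \SCR_F^{\aug} \to \Mod_{F, \geq 0}$ lands in connective modules essentially by construction — it is defined (\Cref{basicprop}(3) and the definition of $\cot_R$) as a left adjoint built out of finite free modules via the $\mathcal P_\Sigma$-construction, hence automatically preserves connectivity and takes values in $\Mod_{F,\geq 0}$. So (i) is formal once one unwinds the definitions; no hypothesis on $A$ is needed for that part, and indeed the target of $\cot_F$ is written as $\Mod_{F,\geq 0}$ already in its definition.

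The substance is (ii), the finite type claim, and this is where I would invoke \cite[Proposition 3.1.5]{lurie2004derived} and \cite[Proposition 3.2.14]{lurie2004derived} as flagged. The strategy is an induction up the Postnikov tower of $A$. Present $A$ as an inverse limit of its truncations $\tau_{\leq n} A$, with successive stages related by square-zero extensions $\tau_{\leq n} A \to \tau_{\leq n-1} A$ with fibre $\pi_n(A)[n]$, a finitely generated $\pi_0(A)$-module placed in degree $n$. The cotangent complex turns such a square-zero extension into a cofibre sequence of modules, so $\cot(A)$ is assembled from the cotangent complexes of the layers together with the modules $\pi_n(A) \otimes_{\pi_0(A)} F$ (and Tor-corrections). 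For the base case one reduces to $\pi_0(A)$, a complete local Noetherian ring with residue-type map to $F$: here $L_{F/\pi_0(A)}$ has finitely generated homotopy groups because $\pi_0(A) \to F$ is a finite-type map of Noetherian rings, which is exactly the content of the cited André–Quillen finiteness results (almost perfectness / finite type of the cotangent complex for finite-type morphisms of Noetherian rings). Base changing along $\pi_0(A) \to F$ keeps everything finitely generated over $F$, i.e. finite-dimensional. Then one propagates up the tower: at each stage the new contributions $\pi_n(A) \otimes_{\pi_0(A)}^{\mathbb L} F$ have finite-dimensional homology since $\pi_n(A)$ is finitely generated over the Noetherian ring $\pi_0(A)$, and a long exact sequence in homotopy shows each $\pi_i(\cot A)$ remains finite-dimensional. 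Passing to the limit is harmless because in each fixed degree $i$ the tower stabilises (connectivity estimates for cotangent complexes of highly connected square-zero extensions ensure $\pi_i(\cot(\tau_{\leq n}A)) \to \pi_i(\cot A)$ is an isomorphism for $n \gg i$).

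The main obstacle I anticipate is bookkeeping the convergence of the Postnikov filtration and the connectivity estimates for cotangent complexes of square-zero extensions — one needs that $\cot$ of an $n$-connective-fibre square-zero extension is itself suitably connective so that the contributions to $\pi_i(\cot A)$ come from only finitely many layers. This is standard (it is the key input of the cited Lurie propositions), so in the write-up I would simply cite \cite[Propositions 3.1.5 and 3.2.14]{lurie2004derived} for the finite-type statement and note that connectivity is immediate from the definition of $\cot_F$, rather than reproducing the Postnikov induction in full.
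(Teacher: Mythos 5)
Your proposal is correct and takes essentially the same route as the paper, which offers no argument beyond citing \cite[Propositions 3.1.5 and 3.2.14]{lurie2004derived}: connectivity is built into the definition of $\cot_F$ as a left adjoint landing in $\Mod_{F,\geq 0}$, and the finite-type claim is exactly the almost-perfectness of $L_{F/A}$ over $F$ (note $\pi_0(A)\twoheadrightarrow F$ because the augmentation has a section, so $F$ is finitely presented over the Noetherian ring $\pi_0(A)$), which those propositions establish by the Postnikov-tower induction you sketch. The only slip is the shift: with the paper's conventions $\cot_F(A)\simeq F\otimes_A L_{A/F}\simeq L_{F/A}[-1]$ with no extra suspension, but this is harmless since you derive connectivity directly from the definition rather than from that identification.
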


\subsection{The algebraic cotangent complex}\label{acc}
The cotangent complex formalism is of central importance in  deformation theory, as 
was illustrated, for example, in the pioneering \mbox{work of  Illusie \cite{Illusie_1971}.}
  Given a ring map $A \rightarrow B$, the (algebraic) cotangent complex is a derived version of the module of K\"{a}hler differentials $\Omega^1_{B/A}$, and its construction parallels the classical definition of $\Omega^1_{B/A}$.  For more details, we refer to the modern treatment in  \cite[Sections 25.3.1, 25.3.2]{lurie2016spectral}.

\begin{definition}[Derivations]
	For a simplicial commutative ring $R$ and connective $R$-module $M$, we define the \emph{space of derivations of $R$ into $M$} as the following mapping space:
	$$\Der(R,M)=\Map_{({\SCR})_{/R}}(R,R\oplus M).$$
\end{definition}

One can prove that there  exists an $R$-module $L_R$ and a universal derivation $\eta\in \Der(R,L_R)$ such that  for any connective $R$-module $M\in \Mod_{R,\geq 0}$, the natural map  $\Map_{\Mod_R}(L_R, M)\simeq\Der(R,M)$ is an  equivalence. The pair $(L_R, \eta)$ is uniquely 
determined up to equivalence.
\begin{definition}[The algebraic cotangent complex]\

\begin{enumerate}
\item 
The $R$-module $L_R  $ is called the   {\textit{the absolute (algebraic) \emph{cotangent complex} of $R$}.}
\item
For a morphism of simplicial commutative rings $R\to S$, the \emph{relative (algebraic) cotangent complex} $L_{S/R} $ is the cofibre of the natural map $S\otimes_R L_R\to L_S$.
\end{enumerate}
\end{definition}
 The relative cotangent complex also satisfies  a universal property; for any $S$-module $M$,  we have
	$$\Map_{\Mod_S}(L_{S/R},M)\simeq\Map_{(\SCR_{R})_{/S}}(S,S\oplus M).$$

 {Unraveling the definitions, we see that given a  morphism $R\rightarrow S$ and some \mbox{$B\in \SCR_{R//S}$, we have}
$$\cot_{R//S}(B) \simeq S \otimes_B L_{B/R}. $$
Taking $R=S$, we deduce  that for  augmented $R$-algebra $B\in \SCR_R^{\aug}$, we have 
\mbox{$ \cot_R(B) \simeq R \otimes_B L_{B/R}.$} }

\begin{remark} The algebraic cotangent complex is often decorated with a superscript $(-)^{\aaalg}$  to distinguish it from the topological cotangent complex of the underlying $\EE_\infty$-rings.
\end{remark}

The most important computational  tool in dealing with the cotangent complex is the fundamental cofibre sequence, which extends  the classical relative cotangent sequence:
\begin{proposition}[The fundamental cofibre sequence]\label{lem:triangle}
	Given maps of simplicial commutative rings \mbox{$f:A \rightarrow B$} and $g:B\rightarrow C$, there is  a canonical cofibre sequence in $\Mod_C$ given by 
	$$C\otimes_{B} L_{B/A} \rightarrow L_{C/A} \rightarrow L_{C/B}. $$
\end{proposition}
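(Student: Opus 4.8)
The plan is to establish the fundamental cofibre sequence by transitivity of the adjoint relationship between the cotangent fibre and trivial square-zero extension, reducing everything to a statement about mapping spaces. First I would fix an arbitrary $C$-module $M$ and compute the mapping space $\Map_{\Mod_C}(L_{C/A}, M)$ using the universal property recorded just above: it is equivalent to $\Map_{(\SCR_A)_{/C}}(C, C \oplus M)$, the space of $A$-linear derivations of $C$ into $M$. Similarly $\Map_{\Mod_C}(L_{C/B}, M) \simeq \Map_{(\SCR_B)_{/C}}(C, C \oplus M)$ and $\Map_{\Mod_C}(C \otimes_B L_{B/A}, M) \simeq \Map_{\Mod_B}(L_{B/A}, M) \simeq \Map_{(\SCR_A)_{/B}}(B, B \oplus M)$, where in the last step $M$ is regarded as a $B$-module via $g$ and we have used base change for the cotangent complex along $B \to C$.

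Next I would observe that there is a fibre sequence of spaces
\[
\Map_{(\SCR_B)_{/C}}(C, C\oplus M) \to \Map_{(\SCR_A)_{/C}}(C, C\oplus M) \to \Map_{(\SCR_A)_{/B}}(B, B\oplus M),
\]
where the second map sends a derivation $d\colon C \to C \oplus M$ to its restriction along $f$ composed with the projection $C \oplus M \to B \oplus M$ induced by $g$. The fibre of this restriction map, over the zero derivation, is precisely the space of $A$-derivations of $C$ into $M$ that vanish on (the image of) $B$, which is exactly the space of $B$-linear derivations; the reason it is an actual fibre sequence rather than merely a left-exact diagram is that these mapping spaces are infinite loop spaces (they are the underlying spaces of the connective spectra $\Map_{\Mod_C}(-, M)$), so a left-exact sequence of such is automatically a fibre sequence, and in fact extends to a cofibre sequence of spectra. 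Dualising — i.e. using that $\Mod_C$ is stable and the Yoneda functor $\Map_{\Mod_C}(-, M)$ is exact and jointly conservative as $M$ ranges over all $C$-modules — this fibre sequence of mapping spectra corresponds to a cofibre sequence
\[
C \otimes_B L_{B/A} \to L_{C/A} \to L_{C/B}
\]
in $\Mod_C$, as desired. (A cleaner packaging: apply the functor $L_{(-)/A}$ to $A \to B \to C$, use that $L_{C/A}$ sits in a cofibre sequence built from $L_{B/A}$ and $L_{C/B}$ by unwinding the definitions $L_{S/R} = \cofib(S \otimes_R L_R \to L_S)$ and the analogous absolute statement, then run the octahedral axiom / a diagram chase in the stable $\infty$-category.)

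The main obstacle — really the only subtlety — is making precise that the relevant diagram of spaces is a genuine fibre sequence and that this transports to a cofibre sequence of modules; this is where stability of $\Mod_C$ and the representability of the cotangent complex do the essential work, and one must be slightly careful that all the base-change identifications ($L_{B/A} \otimes_B C$ versus $L_{(B\otimes_?)/?}$, and the relative-versus-absolute definitions) are compatible. Everything else is a formal consequence of the adjunction $\cot_R \dashv \sqz_R$ and its relative variant $\cot_{R//S} \dashv \sqz_{R//S}$ together with the composite formula $\cot_{R//S}(B) \simeq \cot_S(S \otimes_R B)$ recorded earlier. One can alternatively cite \cite[Section 25.3.2]{lurie2016spectral} directly, where this cofibre sequence is proved in exactly this generality, and simply note that the simplicial-commutative cotangent complex used here is the one appearing there.
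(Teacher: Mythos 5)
Your argument is correct in substance, but note that the paper itself offers no proof of this proposition: it is stated as a standard fact, with the pointer to \cite[Sections 25.3.1, 25.3.2]{lurie2016spectral} given at the start of the subsection doing the work (the result is \cite[Proposition 25.3.3.3]{lurie2016spectral}, and the analogue for $\EE_\infty$-rings is \cite[Theorem 7.3.3.5]{lurie2014higher}). What you have written is essentially a reconstruction of that standard proof, and the two routes you sketch are both viable. The corepresentability route is fine: for connective $M$ the identification of $\Map_{(\SCR_B)_{/C}}(C,C\oplus M)$ with the fibre of the restriction map over the trivial derivation is exactly the mapping-space formula for undercategories, using $B\oplus M\simeq B\times_C(C\oplus M)$, and since all three modules in the sequence are connective, testing against connective $M$ and its suspensions suffices to promote the natural fibre sequences of mapping spectra to a cofibre sequence in $\Mod_C$. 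Two small slips: the restriction is along $g\colon B\to C$, not along $f$, and there is no ``projection $C\oplus M\to B\oplus M$'' --- the map goes the other way, and the correct statement is that an $A$-algebra map $B\to C\oplus M$ over $C$ factors through the pullback $B\oplus M$. Also be aware that the paper's universal property is only stated (and only makes sense) for connective $M$, so the phrase ``as $M$ ranges over all $C$-modules'' should be ``over all connective $C$-modules and their shifts.'' Given the paper's own definition of $L_{S/R}$ as $\cofib(S\otimes_R L_R\to L_S)$, your parenthetical alternative is actually the shortest complete proof: base-change the absolute cofibre sequences along $A\to B\to C$ and apply the octahedral axiom to $C\otimes_A L_A\to C\otimes_B L_B\to L_C$.
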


 {Note that the cofibre sequence allows us to write the functor $\cot_{R//S}$ in the equivalent form 
\[\cot_{R//S}(B)=\cofib(L_{S/R}[-1]\to L_{S/B}[-1])\] as explained in }\cite[Remark 1.1.1.7]{lurie2014higher}.

This construction of the cofibre sequence is functorial in the following sense:

\begin{proposition} \label{threemaps} {
There is a functor of $\infty$-categories  $\SCR_{A//C} \rightarrow \Fun(\Delta^2, \Mod_C)$ sending an object $(A \rightarrow B \rightarrow C) \in \SCR_{A//C}$ to the 
canonical cofibre sequence $C\otimes_{B} L_{B/A} \rightarrow L_{C/A} \rightarrow L_{C/B} $ from \Cref{lem:triangle} and a morphism 
$ (A \rightarrow B_1 \rightarrow B_2 \rightarrow C) \in \Fun(\Delta^1, \SCR_{A//C})$ to the canonical map of cofibre sequences  
$$\xymatrix{
C\otimes_{B_1} L_{B_1/A}  \ar[r] \ar[d] &  L_{C/A}  \ar[d]  \ar[r] & L_{C/B_1} \ar[d]\\ 
C\otimes_{B_2} L_{B_2/A}  \ar[r] & L_{C/A}  \ar[r]   & L_{C/B_2} }. $$}

\end{proposition}

 {\begin{proof} {
Write $\Fun'(\Delta^1,  \SCR_{A//C}) \subset \Fun(\Delta^1,  \SCR_{A//C})$ for the full subcategory of arrows sending $1 \in \Delta^1$ to $(A \xrightarrow{ } C \xrightarrow{\id} C) \in \SCR_{A//C}$.  This object is final,  which means that 
evaluation at $0\in  \Delta^1$  provides an equivalence 
$\Fun'(\Delta^1,  \SCR_{A//C}) \simeq  \SCR_{A//C}$.  Picking an inverse from a contractible space of choices gives rise to a functor 
$ \SCR_{A//C} \rightarrow \Fun(\Delta^1,  \SCR_{A//C}) $ sending 
$(A \xrightarrow{} B \rightarrow C) $ to $   (A \xrightarrow{ } B \xrightarrow{}C \xrightarrow{\id} C) $.}

 {Postcomposing it with the cotangent fibre functor $\cot_{A//C}$ from \Cref{cotfib}
gives the  auxiliary  functor $\SCR_{A//C} \rightarrow  \Fun(\Delta^1,  \Mod_C)$ sending $(A \xrightarrow{} B \rightarrow C)$ to the arrow $C \otimes_B L_{B/A} \rightarrow L_{C/A}$.}

 {Finally,  let us write $ \Fun'(\Delta^2, \Mod_C) \subset  \Fun(\Delta^2, \Mod_C)$ for the full subcategory spanned by all cofibre sequences.
Restriction to $\Delta^1 \simeq \Delta^{\{0,1\}} \subset \Delta^2$ defines an equivalence 
$ \Fun'(\Delta^2, \Mod_C)\xrightarrow{\simeq} \Fun(\Delta^1, \Mod_C) $ sending a cofibre sequence $(M \rightarrow M' \rightarrow M'')$ to $(M \rightarrow M')$.}

 {Postcomposing with its inverse,  chosen again from a contractible space of choices as in \cite[Remark 1.1.1.7]{lurie2014higher},  we obtain the desired functor 
$\SCR_{A//C} \rightarrow \Fun(\Delta^2,\Mod_C) $ sending $(A \rightarrow B \rightarrow C)$ to $(C \otimes_B L_{B/A} \rightarrow L_{C/A} \rightarrow L_{C/B}),$
where we have  identified the cofibre of $C \otimes_B L_{B/A} \rightarrow L_{C/A}$ with $L_{C/B}$ using \Cref{lem:triangle}.}
\end{proof}}

\subsection{Formal moduli problems}Any reasonably geometric deformation problem over a field $K$ gives rise to a formal moduli problem. To formalise this notion, we need a family of augmented simplicial commutative $K$-algebras corresponding to  the derived infinitesimal thickenings of $\Spec(K)$:
\begin{definition}[Artinian $K$-algebras] \label{artinian}
A simplicial commutative $K$-algebra $A$ is \textit{Artinian} if 
\begin{enumerate}
 \item   {The $K$-algebra $\pi_0(A)$ is an Artinian ring with residue field, as a $K$ algebra, being $K$;}
\item  {The $K$-vector space $\pi_\ast(A) = \oplus_{n=0}^\infty\pi_n(A)$} is  finite-dimensional.
\end{enumerate}
We write $\SCR^{\art}_K \subset \SCR_K^{\aug}$ for the full subcategory spanned by all  {(simplicial) Artinian $K$-algebras $A$, along with the canonical augmentation map given by the composite $A\to \tau_{\leq 0}(A)=\pi_0(A)\to K$.}
\end{definition}
We can now recall Lurie's higher categorical framework for  formal deformation \mbox{functors, cf.\ \cite{lurie2011derivedX}:}
\begin{definition}  \label{fmpdef}
	A \emph{formal moduli problem} is a functor $X:\SCR^{\art}_K\to \mathcal{S}$ such that
	\begin{enumerate}
		\item $X(K)\simeq \ast$ is contractible;
		\item  For any pullback square \vspace{-15pt}
		$$
		\begin{tikzcd}
		A_3\ar[r]\ar[d] & A_2\ar[d]\\
		A_1\ar[r] & A_0\vspace{-5pt}
		\end{tikzcd}
		$$
		where $\pi_0(A_2)\to \pi_0( A)$ and $\pi_0(A_1)\to \pi_0(A)$ are surjective,
	applying $X$ gives a pullback\vspace{-3pt}
		$$
		\begin{tikzcd}
		X(A_3)\ar[r]\ar[d] & X(A_2)\ar[d]\\
		X(A_1)\ar[r] & X(A_0).
		\end{tikzcd}\vspace{-3pt}
		$$
	\end{enumerate}	
	We denote the $\infty$-category of formal moduli problems over $K$ by $\mathrm{Moduli}_{K}$.
\end{definition}

\begin{example}\label{ex:functor_points}
	Let $A$ be a commutative ring which is an augmented $K$-algebra.  Then $A$ gives a formal moduli problem via the functor of points:  $\Spf(A):\SCR^{\art}_{K//K}\to \mathcal{S}, \ R\mapsto \Map_{\SCR_{K//K}}(A,R)$.\vspace{-5pt}
\end{example}

In characteristic zero,   Lurie \cite{lurie2011derivedX} and Pridham \cite{pridham2010unifying}  
showed that formal moduli problems  are controlled by  differential graded Lie algebras.
Partition Lie algebras were introduced in \cite{brantner2019deformation} to generalise this statement to base fields of arbitrary characteristic.
Note that the link between Lie algebras and formal deformation theory only becomes fully realised if one works in the   setting of derived algebraic geometry.

We will now describe the relation between  formal moduli problems and  Lie \mbox{algebras in more detail. } To begin with, recall that 
given a formal moduli problem $X$ over a field $K$, we can associate its tangent fibre \mbox{$T_X\in \Mod_K$,} which, as a spectrum, satisfies 
$$\Omega^{\infty-n}(T_X) =X(K\oplus K[n]). $$ Here $K\oplus K[n]$ is the trivial square-zero extension of $K$ by a copy of $K$ in
(homological) degree $n$.

In fact, $T_X\in \Mod_K$ is characterised by a natural equivalence 
\begin{equation} \label{functorialtangent} \Map_K(V^\ast, T_X) \simeq  X( K \oplus V),  \end{equation}
where $V$ varies over perfect connective $K$-modules , and  $(-)^\ast$ denotes $K$-linear duality.

The key observation is that $T_X$ can be endowed the structure of a \textit{partition Lie algebra}, that is,  an algebra over a certain 
monad $\Alg_{\Lie_{K}^\pi}$ on the derived $\infty$-category $\Mod_K$ of chain \mbox{complexes over $K$. }
We will describe this monad in more detail in \Cref{extending} below.
The main result of \cite{brantner2019deformation} uses this construction to generalise  the Lurie--Pridham theorem to arbitrary characteristics:

\begin{theorem}[\cite{brantner2019deformation}, Theorem 1.11] \label{plieeq}
	If $K$ is a field,  the functor $\mathrm{Moduli}_{K} \rightarrow \Mod_K$ sending a formal moduli problem $X$ to its tangent fibre $T_X$ refines to an 
 equivalence of $\infty$-categories $$\mathrm{Moduli}_{K}\simeq \Alg_{\Lie_{K}^\pi}.$$ 
\end{theorem}
  
\subsection{ {Extending monads and functors}}\label{extending}
We will briefly outline the higher categorical construction of the partition Lie algebra monad $\Lie_K^\pi$ presented in 
 \cite{brantner2019deformation}, which we will  generalise to the case of Lie algebroids in \Cref{threeone} of the main text.
For an alternative, more explicit, construction of partition Lie algebras using point set models, we refer to  \mbox{\cite[Theorem 5.33, Construction 5.34]{brantner2021pd}.}
 
To construct the monad $\Lie_K^\pi$, we first note that the (contravariant) tangent fibre functor $\SCR_{K//K}^{\op} \rightarrow \Mod_{K,\leq 0}$, 
$A \mapsto \cot(A)^\vee= (k \otimes_A L_{A/k})^\vee$ from  augmented simplicial commutative $K$-algebras to  chain complexes over $K$ is part of an adjunction. \vspace{4pt}

The associated monad $T$ on $\Mod_{K, \leq 0}$ sends
$V \in \Mod_K$ to $\displaystyle L^\vee_{K/K \oplus V^\vee}[1] \simeq (K \myotimes{K \oplus V^\vee} L_{K \oplus V^\vee/K})^{\vee}$. 

However,   $T$ is \textit{not} the right monad for the purposes of deformation theory,
as it does not preserve sifted colimits. 
To overcome this obstacle, we replace $T$ by a more well-behaved monad $\Lie_K^\pi$.

\begin{construction}[Partition Lie algebra monad]\label{Plamonad}
We briefly outline the construction of $\Lie_K^\pi$, which appears,  in a more abstract language,  in the proof of \cite[Corollary 5.46]{brantner2019deformation}.
\begin{enumerate}
\item First, we check that  the monad $T$ preserves the full subcategory $\Mod_{K,\leq 0}^{\ft} \subset \Mod_K$ of chain complexes $V$ which are coconnective and of finite type (cf.\ e.g.\  \cite[Proposition 3.2.14]{lurie2004derived}).
Hence  $T|_{\Mod_{K,\leq 0}^{\ft}}$ {acquires the structure of a monad}.

The proof of \cite[Proposition 5.49]{brantner2019deformation} gives a  description of this restriction $T|_{\Mod_{K,\leq 0}^{\ft}}$: if 
$V^\bullet$ is a cosimplicial $K$-module whose associated chain complex $\Tot(V^\bullet)$ is  \mbox{of finite type, then}
 $$T|_{\Mod_{K,\leq 0}^{\ft}}(\Tot(V^\bullet))  \simeq \bigoplus_{n} \Tot \left(\widetilde{C}^\bullet(\Sigma |\Pi_n|^\diamond,K) \otimes (V^\bullet)^{\otimes n}\right)^{\Sigma_n}.$$  
Here $\widetilde{C}^\bullet(-,K)$ are  the  $K$-valued  cosimplices of the (doubly suspended) $n^{th}$  partition complex (cf.\ e.g.\ \cite{arone2018action}),  the functor $(-)^{\Sigma_n}$ takes strict fixed points,  and the tensor product is  {computed in cosimplicial $K$-modules.}

\item Using this explicit description,  it is not hard to check that the functor $T|_{\Mod_{K,\leq 0}^{\ft}}$ is
 
\begin{itemize}
\item \textit{right complete}  which   means that
the canonical map $\colim_n T(\tau_{\leq -n} V) \xrightarrow{\simeq} T(V)$ is an equivalence  for all $V\in \Mod_{K,\leq 0}^{\ft}$;
\item \textit{preserves finite coconnective geometric realisations}, which 
  means that if $V_\bullet$ is a simplicial object  in $\Mod_{K,\leq 0}^{\ft}$ which is $m$-skeletal (for some $m$) and satisfies with $|V_\bullet| \in \Mod_{K,\leq 0}^{\ft}$, then the canonical map $|T(V_\bullet)| \xrightarrow{\simeq} T(|V_\bullet|)$ is an equivalence.\end{itemize}
\item  Next, we show  in \cite[Proposition 3.16]{brantner2019deformation} 
that restriction induces an equivalence 
$$\End_\Sigma(\Mod_K) \xrightarrow{\simeq} \Fun_{\sigma}'(\Mod_{K, \leq 0}^{\ft}, \Mod_K) $$
between the full subcategory $\End_\Sigma(\Mod_K)  \subset  \End (\Mod_K)$ of sifted-colimit-preserving endofuctors  of $\Mod_K$ and the full subcategory 
$ \Fun_{\sigma}'(\Mod_{K, \leq 0}^{\ft}, \Mod_K) \subset  \Fun(\Mod_{K, \leq 0}^{\ft}, \Mod_K)$ of  right complete 
functors which preserve finite coconnective geometric realisations.

Let us  write $\End_\Sigma^{\Mod_{K,\leq 0}^{\ft}} \subset \End(\Mod_K)$ for the full subcategory of sifted-colimit-preserving endofuctors  of $\Mod_K$ which preserve $\Mod_{K,\leq 0}^{\ft}$, and let $\End_\sigma'(\Mod_{K,\leq 0}^{\ft})\subset \End(\Mod_{K,\leq 0}^{\ft})$ be the full subcategory of right complete endofunctors of 
$\Mod_{K,\leq 0}^{\ft}$ which preserve finite coconnective geometric realisations.

Then the above equivalence implies the that the following restriction functor is an equivalence as well (cf.\ \cite[Corollary 3.17]{brantner2019deformation}):
$$\End_\Sigma^{\Mod_{K,\leq 0}^{\ft}} (\Mod_K) \xrightarrow{\simeq} \End_\sigma^{'}(\Mod_{K,\leq 0}^{\ft})$$ 
\item Using this equivalence, we extend the monad $T|_{\Mod_{K,\leq 0}^{\ft}}$ to  obtain the  monad 
 $\Lie_K^\pi$ on $\Mod_K$.
This monad  $\Lie_K^\pi$ preserves
  filtered colimits and geometric realisations, and if 
$V^\bullet$ is a cosimplicial $K$-module with associated chain complex $\Tot(V^\bullet)$, then
 $$\Lie_K^{\pi} (\Tot(V^\bullet))  \simeq \bigoplus_{n} \Tot \left(\widetilde{C}^\bullet(\Sigma |\Pi_n|^\diamond,K) \otimes (V^\bullet)^{\otimes n}\right)^{\Sigma_n}.$$   
\end{enumerate}
\end{construction}

\newpage

\newpage

\newcommand*{\myovline}[2]{\overbracket[#2][-1pt]{#1}}

\newpage

\section{Partition Lie Algebroids}\label{sec:plads}
Let $F/K$ be a finite purely inseparable field extension. Write $L_{F/K}^\vee[1]$ for the shifted   dual of its relative cotangent complex, so that $\pi_1(L_{F/K}^\vee[1]) \cong \Der_K(F)$ consists of \mbox{$K$-linear derivations of $F$.}
In this section, we will  define $F/K$-partition Lie algebroids and establish some of their key \mbox{properties,} drawing inspiration from
the partition Lie algebras introduced in \cite{brantner2019deformation}.

\subsection{Constructing partition Lie algebroids} \label{threeone} Partition Lie algebroids will consist of \vspace{-1pt} arrows $$(\mathfrak{g} \rightarrow L_{F/K}^\vee[1])$$
with additional structure. More formally, there will be a monad $\LieAlgd^\pi_{F/K}$  on the \vspace{-1pt}   \mbox{overcategory} $$\left(\Mod_F\right)_{/L_{F/K}^\vee[1]}$$ whose  algebras will be the desired $F/K$-partition Lie algebroids. These algebraic structures will later allow us to classify intermediate fields $K \subset E \subset F$. To construct our monad, we will prove:

\begin{theorem}[The $F/K$-partition Lie algebr{oid} monad]\label{maindef} \ 
\begin{enumerate}
\item The relative tangent fibre \vspace{-2pt}  $$\cot_{K//F}^\vee: \SCR_{K//F} ^{\op} \rightarrow \Mod_{F, \leq 0} \vspace{-2pt} \ \ \ \ \ \ \ \ \ \ \ \   \ \ \ \ \ \  \ \ \ \ \ \ $$ $$  \ \ \ \ \ \  \ \ \ \ \ \  \ \ \ \ \ \ \  B \mapsto   \fib(L_{F/B}^\vee[1] \rightarrow L_{F/K}^\vee[1]) $$  admits a left adjoint  $V \mapsto F\oplus V^\vee$. 
Let $\myovline{T}{1.0pt}  :\Mod_{F, \leq 0} \rightarrow \Mod_{F, \leq 0}$ be the induced monad. \vspace{3pt}

\item There is a unique monad $T$ on $\Mod_F$ which agrees with $\myovline{T}{1.0pt} $ on $\Mod_{F,\leq 0}^{\ft}$ and moreover preserves filtered colimits and geometric realisations.  Write $\mathcal{C} $ for the \mbox{$\infty$-category of $T$-algebras.}\vspace{3pt}

\item The   forgetful functor $\mathcal{C} \rightarrow \Mod_F$ factors over the fibre functor \mbox{$\fib: (\Mod_{F})_{/L_{F/K}^\vee[1]} \rightarrow \Mod_F$}
via a canonical monadic right adjoint $$\mathcal{C} \rightarrow  (\Mod_{F})_{/L_{F/K}^\vee[1]},$$ which gives rise to a   sifted-colimit-preserving monad $\LieAlgd^\pi_{F/K}$ on $(\Mod_{F})_{/L_{F/K}^\vee[1]}$.
\end{enumerate}
\end{theorem}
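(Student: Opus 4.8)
The plan is to construct everything by transporting the known monad $\liep$ (i.e.\ $\cot_F^\vee \circ \sqz_F$ on $\Mod_F$, from \Cref{pliecon}) along the overcategory adjunctions attached to the base change $K \to F$, and then Beck-monadicity does the rest.

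First I would establish part (1). The functor $\cot_{K//F}^\vee$ sends $B \in \SCR_{K//F}$ to $\fib(L_{F/B}[1]^\vee \to L_{F/K}[1]^\vee)$; unwinding the fundamental cofibre sequence (\Cref{lem:triangle}) applied to $K \to B \to F$, this fibre is $(F\otimes_B L_{B/K})^\vee = \cot_{K//F}(B)^\vee$, the $F$-linear dual of the relative cotangent fibre. So the claim is that $B \mapsto \cot_{K//F}(B)^\vee$ has left adjoint $V \mapsto F \oplus V^\vee$. This follows by dualising the adjunction $\cot_{K//F} \dashv \sqz_{K//F}$ already recorded in the preliminaries: for $V \in \Mod_{F,\leq 0}^{\ft}$, $V^\vee$ is connective of finite type, and
$$\Map_{\SCR_{K//F}}(B, F\oplus V^\vee) = \Map_{\SCR_{K//F}}(B, \sqz_{K//F}(V^\vee)) = \Map_{\Mod_F}(\cot_{K//F}(B), V^\vee) = \Map_{\Mod_F}(V, \cot_{K//F}(B)^\vee),$$
using finiteness (\Cref{cnoethcot}) to turn the last duality into an honest adjunction on coconnective finite-type modules. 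The induced monad $\overline{T}$ on $\Mod_{F,\leq 0}^{\ft}$ is $V \mapsto \cot_{K//F}(F\oplus V^\vee)^\vee$; one then extends it to all of $\Mod_{F,\leq 0}$ exactly as in \Cref{pliecon}, since every coconnective module is a sifted colimit of finite-type coconnective ones and the formula is sifted-colimit-compatible.

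Next, part (2): there is a unique sifted-colimit-preserving monad $T$ on $\Mod_F$ agreeing with $\overline{T}$ on $\Mod_{F,\leq 0}^{\ft}$. This is the same extension lemma used to construct $\liep$ in \cite{brantner2019deformation}: $\Mod_F$ is freely generated under sifted colimits by $\Mod_{F,\leq 0}^{\ft}$ (or an appropriate compact generating subcategory), and a monad structure extends uniquely along such a free cocompletion once one checks the structure maps are determined on the generators; I would simply cite the relevant extension result from [op.\ cit.]. Write $\mathcal{C} = \Alg_T$.

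Finally, part (3), where the real content sits. The forgetful functor $\mathcal{C} \to \Mod_F$ is monadic by Barr--Beck--Lurie (it preserves limits and $T$-split colimits, and is conservative). To upgrade the target to $(\Mod_F)_{/L_{F/K}^\vee[1]}$, I would produce a natural transformation exhibiting every free $T$-algebra $T(V)$ as equipped with a canonical map $T(V) \to L_{F/K}^\vee[1]$: indeed $L_{F/K}^\vee[1] = \cot_F(F\otimes_K F)^\vee$ carries its $\liep$-structure (\Cref{firstex}(2) / \Cref{pliecon}(4)) and the base-change map $F \to F\otimes_K F$ induces, on tangent fibres, a comparison from the ``$F/K$-relative'' construction to the absolute one; concretely the fibre sequence $\cot_{K//F}(B)^\vee \to L_{F/B}^\vee[1] \to L_{F/K}^\vee[1]$ furnishes, for $B = F\oplus V^\vee$, a natural arrow $\overline{T}(V) \to L_{F/K}^\vee[1]$, which extends to $T$ by sifted colimits. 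This makes the forgetful functor factor through $\fib\colon (\Mod_F)_{/L_{F/K}^\vee[1]} \to \Mod_F$. One then checks the induced functor $\mathcal{C} \to (\Mod_F)_{/L_{F/K}^\vee[1]}$ is again monadic via Barr--Beck--Lurie: conservativity follows because $\fib$ is conservative and $\mathcal{C} \to \Mod_F$ already is; limits and split geometric realisations are preserved because $\fib$ preserves all limits and colimits and $\mathcal{C} \to \Mod_F$ preserves the relevant ones. The resulting monad on $(\Mod_F)_{/L_{F/K}^\vee[1]}$ is $\LieAlgd^\pi_{F/K}$, and it preserves sifted colimits because both $T$ and $\fib$ do.

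\emph{Main obstacle.} The delicate point is part (3): constructing the canonical anchor map $T(-) \to L_{F/K}^\vee[1]$ \emph{coherently} (as a map of monads, or at least a module-like structure compatible with the monadic bar resolution), rather than just objectwise, and then verifying that the lifted functor $\mathcal{C} \to (\Mod_F)_{/L_{F/K}^\vee[1]}$ genuinely satisfies the Barr--Beck hypotheses after the base has changed to an overcategory. I expect this to require carefully threading the fundamental cofibre sequence of \Cref{lem:triangle} through the relevant bar constructions and using \Cref{threemaps} for the necessary functoriality.
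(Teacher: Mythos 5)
Your parts (1) and (2) do follow the paper's route: (1) is the dualised adjunction $(\cot_{K//F}\dashv\sqz_{K//F})$, and (2) invokes the extension machinery of \cite{brantner2019deformation}. Be aware, though, that the cited extension result is not a free-sifted-cocompletion statement: to apply \cite[Corollary 3.17]{brantner2019deformation} one must check that $\overline{T}$ preserves $\Mod_{F,\leq 0}^{\ft}$, commutes with finite coconnective geometric realisations, and is right complete. The paper verifies these via a natural cofibre sequence
$$L_{F/K}^\vee \rightarrow \overline{T}(V) \rightarrow \liep(V),$$
obtained by dualising the fundamental cofibre sequence for $K\to F\oplus V^\vee\to F$ and comparing with the known properties of $\liep$ (\Cref{basiccof}). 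This sequence is the one ingredient your plan never isolates, and it is also exactly what dissolves the ``main obstacle'' you flag in part (3).

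For part (3), the step as written would fail. The arrow $\overline{T}(V)\to L_{F/K}^\vee[1]$ you propose to extract from the fibre sequence is the composite $\fib\to L_{F/B}^\vee[1]\to L_{F/K}^\vee[1]$, which is canonically nullhomotopic; and in any case a map \emph{out of} $\overline{T}(V)$ points the wrong way for the forgetful functor to factor through $\fib$. For that factorisation one needs the underlying module of a $T$-algebra to be the \emph{fibre} of the anchor, i.e.\ one needs a map \emph{from} $L_{F/K}^\vee$ \emph{into} $\overline{T}(V)$; the source of the anchor of the free algebroid is $\cofib(L_{F/K}^\vee\to\overline{T}(V))\simeq\liep(V)$, not $\overline{T}(V)$ itself. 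The paper's argument uses precisely the connecting map $L_{F/K}^\vee\to\overline{T}(V)$ of the displayed cofibre sequence: it is a natural transformation from the \emph{constant} functor $L_{F/K}^\vee$ to $T$, so no map of monads and no bar-resolution coherence is required --- every $T$-algebra is canonically pointed by $L_{F/K}^\vee$, the forgetful functor factors through the undercategory $(\Mod_F)_{L_{F/K}^\vee/}$, whose forgetful functor creates limits, filtered colimits and geometric realisations by \Cref{limcolimunder}, and the crude Barr--Beck--Lurie theorem gives monadicity there. The stable equivalence $(\Mod_F)_{L_{F/K}^\vee/}\simeq(\Mod_F)_{/L_{F/K}^\vee[1]}$ (fibre/cofibre) then transports everything to the overcategory. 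With that reorganisation your Barr--Beck checks go through; without it, the lift you describe is not defined.
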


Assuming this result, we can introduce the main definition of this paper:

\begin{definition}[Partition Lie algebroids]\label{palgebroiddef} An \textit{$F/K$-partition Lie algebroid} is an algebra for the monad $\LieAlgd^\pi_{F/K}$ on 
$(\Mod_{F})_{/L_{F/K}^\vee[1]}$.  
 Write  $\Alg_{\LieAlgd^\pi_{F/K} }\simeq \mathcal{C} $ for the   resulting $\infty$-category.
The underlying object of a partition Lie algebroid is usually denoted by $(\mathfrak{g} \rightarrow L_{F/K}^\vee[1])$.
\end{definition}

In our proof of \Cref{maindef} and elsewhere, we will need to compute limits and colimits in undercategories (and dually overcategories).
This is  possible by the following standard result:

\begin{lemma}[Limits and colimits in undercategories] \label{limcolimunder}  
	Let $x$ be an object in  an \mbox{$\infty$-category $\mathcal{C}$.} The forgetful functor $ \mathcal{C}_{x/}\rightarrow \mathcal{C}$ 
creates  small colimits indexed by \mbox{contractible categories and all small limits.}
\end{lemma}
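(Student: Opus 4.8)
The plan is to handle limits and colimits separately, in each case transporting the relevant (co)limit problem in $\mathcal{C}_{x/}$ to a (co)limit problem in $\mathcal{C}$ by means of the associativity of the slice construction, and then reading off which hypotheses are needed.

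For limits there is nothing subtle, and I would just record the standard argument. A diagram $p\colon K\to\mathcal{C}_{x/}$ is the same data as a diagram $\bar p\colon K\to\mathcal{C}$ together with a cone over it with apex $x$; if $\bar p$ admits a limit, that cone factors through the limit cone, and one checks this exhibits a limit of $p$ which is computed by the forgetful functor $U\colon\mathcal{C}_{x/}\to\mathcal{C}$. Running the same argument starting from an arbitrary cone over $p$ shows $U$ also reflects limit cones, so $U$ creates all small limits, with no hypothesis on $K$. (This is the dual of the equally standard fact that the projection $\mathcal{C}_{/y}\to\mathcal{C}$ from an overcategory creates small colimits, cf.\ \cite[\S1.2.13]{lurie2009higher}.)

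For colimits, which is where the contractibility hypothesis enters, I would argue as follows. Given $p\colon K\to\mathcal{C}_{x/}$ with $K$ a contractible category, unwind $p$ to a diagram $q\colon K^{\triangleleft}=\{v\}\star K\to\mathcal{C}$ sending $v$ to $x$. The associativity of slices (\cite[\S1.2.9]{lurie2009higher}, \cite[\S4.2.1]{lurie2009higher}) identifies $(\mathcal{C}_{x/})_{p/}$ with $\mathcal{C}_{q/}$, so a colimit of $p$ in $\mathcal{C}_{x/}$ is precisely a colimit of $q$ in $\mathcal{C}$. The key point is then that the inclusion $j\colon K\hookrightarrow K^{\triangleleft}$ is cofinal exactly because $K$ is contractible: by Joyal's cofinality criterion (\cite[\S4.1]{lurie2009higher}) it suffices to check that for each object $b$ of $K^{\triangleleft}$ the comma category $K\times_{K^{\triangleleft}}(K^{\triangleleft})_{b/}$ is weakly contractible; for $b$ lying in $K$ this category has the initial object $\id_b$, and for $b$ equal to the cone point $v$ it is $K$ itself, which is contractible by hypothesis. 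Cofinality of $j$ yields $\colim_{\mathcal{C}}q\simeq\colim_{\mathcal{C}}(q\circ j)=\colim_{\mathcal{C}}(Up)$, one existing iff the other does and compatibly, so $U$ creates colimits indexed by contractible categories.

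The main obstacle I anticipate is purely the bookkeeping of the slice-associativity equivalence $(\mathcal{C}_{x/})_{p/}\simeq\mathcal{C}_{q/}$, together with the precise description of the unwound diagram $q$ on $\{v\}\star K$; once that identification is correctly set up, the cofinality computation above isolates exactly the contractibility assumption, and the creation statements follow formally. As a sanity check (and to see the hypothesis cannot be weakened), one notes that $U$ fails to create the empty colimit — the initial object of $\mathcal{C}_{x/}$ is $(x\xrightarrow{\id}x)$, not $(x\to\emptyset_{\mathcal{C}})$ — and fails to create coequalizers, matching the fact that $\emptyset$ and $\bullet\rightrightarrows\bullet$ are not contractible.
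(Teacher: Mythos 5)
Your proof is correct and is essentially the paper's argument with the citations unpacked: the paper simply invokes the dual of \cite[Proposition 1.2.13.8]{lurie2009higher} for limits and \cite[Proposition 4.4.2.9]{lurie2009higher} together with \cite[Proposition 2.1.2.1]{lurie2009higher} for colimits, and your cofinality computation for $K\hookrightarrow \{v\}\star K$ is precisely the content of the cited colimit statement. The only point to flag is that ``contractible category'' must be read as ``weakly contractible nerve'' (which is how filtered categories and $\Delta^{op}$ qualify), exactly as your Joyal-criterion check uses it.
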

  \begin{proof}
		The claim about limits follows by the dual of \cite[Proposition 1.2.13.8]{lurie2009higher}.   The claim about colimits follows from \cite[Proposition 4.4.2.9]{lurie2009higher} and \cite[Proposition 2.1.2.1]{lurie2009higher}.		
	\end{proof}

To prove \Cref{maindef}, we will first establish a variant of \cite[Example 6.14]{brantner2019deformation}, which will then allow us to deduce the theorem from corresponding results in [op.cit.].

\begin{proposition}\label{basiccof}
 {There is a natural cofibre sequence of functors $\Mod_{F,\leq 0}^{\ft}\to \Mod_F$ given by}
$$ L_{F/K}^\vee \rightarrow \myovline{T}{1.0pt}  \rightarrow \Lie_{F}^\pi,\vspace{2pt} $$
for  $L_{F/K}^\vee$   the constant functor and  $\Lie_{F}^\pi$ the  partition Lie algebra monad, \mbox{cf.\ \cite[Definition 5.47]{brantner2019deformation}.}
\end{proposition}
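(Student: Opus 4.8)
The plan is to unwind the definition of $\myovline{T}{1.0pt}$ via the composite adjunction $\cot_{K//F}^\vee \dashv (V \mapsto F \oplus V^\vee)$ and relate it to the absolute construction $\liep(V) = L_{F/F\oplus V^\vee}^\vee[1]$ from \Cref{pliecon}(2). Concretely, for $V \in \Mod_{F,\leq 0}^{\ft}$ the module $F \oplus V^\vee$ is an object of $\SCR_{K//F}$, and by definition
$$\myovline{T}{1.0pt}(V) \simeq \cot_{K//F}^\vee(F \oplus V^\vee) = \fib\bigl(L_{F/F\oplus V^\vee}^\vee[1] \rightarrow L_{F/K}^\vee[1]\bigr) = \fib\bigl(\liep(V) \rightarrow L_{F/K}^\vee[1]\bigr).$$
So the content of the proposition is twofold: first, that the map $\liep(V) \to L_{F/K}^\vee[1]$ appearing here is nullhomotopic (equivalently, that the anchor map on the basic/free algebroid on a zero arrow vanishes, matching \Cref{maincons}(5)); and second, that once we know this, the fibre sequence $\fib \to \liep(V) \to L_{F/K}^\vee[1]$ rotates to the asserted cofibre sequence $L_{F/K}^\vee \to \myovline{T}{1.0pt}(V) \to \liep(V)$, using $\fib(0 \colon A \to B) \simeq A \oplus B[-1]$ and the shift $L_{F/K}^\vee[1][-1] = L_{F/K}^\vee$. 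The naturality in $V$ is automatic since every map in sight is induced by a natural transformation of functors on $\Mod_{F,\leq 0}^{\ft}$.

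The first step in detail: the structure map $L_{F/B}^\vee[1] \to L_{F/K}^\vee[1]$ for $B = F \oplus V^\vee$ is dual to the map $L_{F/K}[1] \to L_{F/B}[1]$ coming from \Cref{lem:triangle} applied to $K \to B \to F$ — but this is not the relevant map; rather we want the map to the Kodaira–Spencer partition Lie algebra $L_{F/K}^\vee[1]$ of \Cref{firstex}(2). I would identify this map using that $B = F \oplus V^\vee$ is augmented over $F$, giving a retraction $F \to B \to F$, and argue that the composite $\Spec F \to \Spec B \to \Spec F$ being the identity forces the induced map on Kodaira–Spencer moduli problems (hence tangent fibres) to factor through the trivial one; equivalently, the map $F \otimes_B L_B \to L_F$ relevant here is split by the augmentation, so the fibre over $L_{F/K}^\vee[1]$ splits off a copy of $L_{F/K}^\vee$. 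This is precisely the reasoning behind \cite[Example 6.14]{brantner2019deformation}, and I expect to follow it closely, transporting it across the equivalence $\moduli_F \simeq \Alg_{\liep}$ of \Cref{plieeq}.

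The main obstacle will be making the splitting canonical and natural in $V$ at the level of $\infty$-categories rather than just on homotopy groups — i.e. producing the cofibre sequence as a genuine (co)fibre sequence of functors $\Mod_{F,\leq 0}^{\ft} \to \Mod_F$, not merely a degreewise short exact sequence. The cleanest route is to exhibit $\myovline{T}{1.0pt}$ itself as a composite of the functor $V \mapsto (\liep(V) \to L_{F/K}^\vee[1])$ (with the canonical, possibly nonzero-looking anchor) followed by $\fib$, then show that anchor is canonically null by comparing with the augmentation $B \to F$; the augmentation provides a section of $\SCR_{K//F} \ni B \mapsto$ its image in $\SCR_{F//F}$, and functoriality of $\cot$ (via \Cref{threemaps}) gives the splitting coherently. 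Once the null-homotopy is in hand as data rather than a property, rotating the fibre sequence is formal and naturality is inherited from that of the fundamental cofibre sequence. I would also remark that coconnectivity and finite type of $V$ are used only to stay in the range where $\liep(V) = L_{F/F\oplus V^\vee}^\vee[1]$ on the nose, per \Cref{pliecon}(2), so that $\myovline{T}{1.0pt}$ and the absolute $\liep$ can be compared without passing to colimits.
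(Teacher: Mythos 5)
Your identifications are all correct, and your auxiliary claim --- that the anchor map $L_{F/B}^\vee[1]\to L_{F/K}^\vee[1]$ for $B=F\oplus V^\vee$ is canonically null, because $K\to B$ factors as $K\to F\to B$ compatibly with the augmentation, so that $L_{F/K}\to L_{F/B}$ factors through $L_{F/F}\simeq 0$ --- is true. But the logical structure of your argument is off: the asserted cofibre sequence does \emph{not} require that nullhomotopy. In the stable $\infty$-category $\Mod_F$, any fibre sequence $\fib(\rho)\to Y\xrightarrow{\rho} Z$ rotates to a cofibre sequence $Z[-1]\to \fib(\rho)\to Y$, whether or not $\rho$ is null. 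Applied to $\rho\colon L_{F/B}^\vee[1]\to L_{F/K}^\vee[1]$ this gives exactly $L_{F/K}^\vee\to \overline{T}(V)\to \liep(V)$, naturally in $V$ because the defining fibre sequence is. This is in essence the paper's proof: extend the fundamental cofibre sequence of \Cref{lem:triangle} for $K\to B\to F$ one step to the left to obtain $L_{F/B}[-1]\to F\otimes_B L_{B/K}\to L_{F/K}$, apply $F$-linear duality, and identify the middle and outer terms using \Cref{maindef}(1) and \Cref{pliecon}(2).

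Consequently, the step you single out as ``the main obstacle'' --- producing the nullhomotopy coherently and naturally in $V$ --- is a self-inflicted problem: it would only be needed for the stronger statement that the sequence \emph{splits}, $\overline{T}(V)\simeq L_{F/K}^\vee\oplus\liep(V)$, which the proposition does not assert and the paper never uses. Since that coherence argument is precisely the part of your proposal left as a sketch, your write-up is incomplete as it stands; dropping it and invoking the rotation directly closes the gap immediately. One smaller correction: the map $L_{F/B}^\vee[1]\to L_{F/K}^\vee[1]$ appearing in the definition of $\cot_{K//F}^\vee$ \emph{is} the ($F$-linearly dualised, shifted) map $L_{F/K}\to L_{F/B}$ from the fundamental sequence; no Kodaira--Spencer structure is relevant at this stage, since $\overline{T}$ is a monad on plain chain complexes and the proposition concerns only underlying objects of $\Mod_F$.
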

\begin{proof}
Given $V\in \Mod_{F, \leq 0}^{\ft}$,  write $B = F\oplus V^\vee$ for \vspace{2pt} the trivial square-zero extension of $F$ by $V^\vee$, considered as an object of $\SCR_{K//F}$. Extending the  fundamental cofibre sequence in \Cref{lem:triangle} to the left, we obtain a cofibre sequence $L_{F/B}[-1]  \rightarrow (F \otimes_B L_{B/K}) \rightarrow L_{F/K}.$
It induces the asserted natural cofibre sequence after taking the $F$-linear duals, because we have \mbox{$\myovline{T}{1.0pt} (V) \simeq (F \otimes_B L_{B/K})^\vee$}   and $\Lie_{F}^\pi(V) \simeq (L_{F/B}[-1] )^\vee$ by construction of these two monads.\vspace{-1pt}

\end{proof}

\begin{proof}[Proof of \Cref{maindef} ] Statement $(1)$ follows\vspace{3pt} directly from the definition of  {$\cot_{K//F}$}. 

For $(2)$, we first show that the functor $\myovline{T}{1.0pt} $ preserves the subcategory $\Mod_{F, \leq 0}^{\ft}\subset \Mod_F$,  commutes with finite coconnective geometric realisations and is   right complete  {(cf.\ \Cref{Plamonad}(2)).}

Indeed, this follows  from the cofibre sequence in \Cref{basiccof} and  the corresponding claims for the functor $\Lie_{F}^\pi$, which are
 \mbox{established in    \cite[Corollary 5.46]{brantner2019deformation}.} We then deduce from    {the second equivalence in  \Cref{Plamonad}(3) (cf.\ \cite[Corollary 3.17]{brantner2019deformation})}   that   $\myovline{T}{1.0pt} |_{\Mod_{F, \leq 0}^{\ft}}$ extends uniquely to 
a \mbox{sifted-colimit-preserving monad\vspace{3pt} $T$ on $\Mod_F$}. 

For $(3)$, note that the cofibre sequence in \Cref{basiccof} induces a  transformation of functors\vspace{-2pt} $$ L_{F/K}^\vee \rightarrow T.\vspace{-2pt}$$
 {by applying the first equivalence in \Cref{Plamonad}(3) (cf.\ 
\cite[Proposition 3.16]{brantner2019deformation}).}
Hence, every $T$-algebra receives a canonical map from  $L_{F/K}^\vee $. We obtain a \mbox{factorisation of the forgetful} functor \mbox{$\mathcal{C} :=\Alg_T  \rightarrow \Mod_F$}   over the forgetful functor \mbox{$U: (\Mod_F)_{ L_{F/K}^\vee/}\xrightarrow{} \Mod_F$,  $( L_{F/K}^\vee \rightarrow V)\mapsto V$.}

By \Cref{limcolimunder}, the forgetful functor $U$ creates  small limits,  filtered colimits,  and  geometric realisations, because 
filtered categories and $\mathbf{\Delta} ^{\op}$ are contractible categories.
Since the forgetful functor  $\mathcal{C}   \rightarrow \Mod_F$  {is conservative} and preserves small limits and sifted colimits, 
 this implies that the same holds true for $\mathcal{C}  \rightarrow (\Mod_F)_{ L_{F/K}^\vee/}$. 
The (crude) Barr--Beck--Lurie theorem (cf.\ \cite[Theorem 4.7.0.3]{lurie2014higher}) therefore implies that $$\mathcal{C}  \rightarrow (\Mod_F)_{ L_{F/K}^\vee/}$$ is a sifted-colimit-preserving monadic right adjoint.\vspace{3pt} 
  
 {The fibre functor $\fib: (\Mod_{F})_{/L_{F/K}^\vee[1]} \rightarrow \Mod_F$,  $(f:M\rightarrow L_{F/K}^\vee[1]) \mapsto \fib(f)$ lifts, by \cite[Theorem 1.1.2.14]{lurie2014higher}, to a canonical  equivalence} $$ (\Mod_{F})_{/L_{F/K}^\vee[1]} \simeq  (\Mod_{F})_{L_{F/K}^\vee/}$$
$$ (f:M\rightarrow L_{F/K}^\vee[1]) \mapsto (L_{F/K}^\vee \rightarrow \fib(f))$$
$$ (\cofib(g) \rightarrow L_{F/K}^\vee[1])\mapsfrom (g:L_{F/K}^\vee \rightarrow N).\ \  \ $$
We  obtain the following square:\vspace{0pt}
\begin{equation}\label{square} 
\begin{gathered} $$\xymatrix{
 \mathcal{C} \ar[r] \ar[d]  &  (\Mod_{F})_{/L_{F/K}^\vee[1]} \ar[ld]^\simeq \ar[d] \\ 
(\Mod_{F})_{L_{F/K}^\vee/} \ar[r] & \Mod_F}.$$\end{gathered}
 \end{equation}

The top horizontal map is therefore a sifted-colimit-preserving monadic right adjoint, defining the sifted-colimit-preserving monad $\LieAlgd^\pi_{F/K}$ on \vspace{-3pt}the $\infty$-category $(\Mod_{F})_{/L_{F/K}^\vee[1]}$. 
\end{proof}

We now extend the natural cofibre sequence from \Cref{basiccof}:
\begin{corollary}\label{basiccofextended}
 {
There is a 
natural cofibre sequence  $$ L_{F/K}^\vee \rightarrow {T}  \rightarrow \Lie_{F}^\pi.$$}
\end{corollary}
\begin{proof}
 {Using the equivalence  
$\End_\Sigma(\Mod_K) \xrightarrow{\simeq} \Fun_{\sigma}'(\Mod_{K, \leq 0}^{\ft}, \Mod_K)$
described in \Cref{Plamonad}(3) (cf.\  \cite[Proposition 3.16]{brantner2019deformation}), we can
extend the cofibre sequence $L_{F/K}^\vee \rightarrow \myovline{T}{1.0pt}   \rightarrow \Lie_{F}^\pi $}   {from  \Cref{basiccof}.  Here, we use that the involved functors are right-complete and preserve finite coconnective geometric realisations by  \cite[Corollary 5.46]{brantner2019deformation} and \Cref{maindef}(2),  respectively.}

 {This gives a 
sequence $ L_{F/K}^\vee \rightarrow {T}    \rightarrow \Lie_{F}^\pi$ of sifted-colimit-preserving endofunctors of $\Mod_K$.}
 {We write  $C$ for the cofibre of $ L_{F/K}^\vee \rightarrow {T}$ in $\End(\Mod_K)$ and note that as a 
colimit of sifted-colimit-preserving functors, $C$ also  preserves sifted colimits. 
The universal property of $C$ gives   a natural transformation 
$C \rightarrow  \Lie_{F}^\pi.$
Restricting the cofibre sequence $ L_{F/K}^\vee \rightarrow {T}    \rightarrow C$ to $\Mod_{K,\leq 0}^{\ft}$ gives  a cofibre sequence of functors $\Mod_{K,\leq 0}^{\ft} \rightarrow \Mod_K$, and so the map $C \rightarrow  \Lie_{F}^\pi$ restricts to an equivalence on  $\Mod_{K,\leq 0}^{\ft}$. But by the equivalence in  \Cref{Plamonad}(3), this implies that the map $C \rightarrow  \Lie_{F}^\pi$  is an equivalence on all of $\Mod_K$.}
\end{proof}

Property $(1)$ of \Cref{maincons} in the introduction follows by construction, as $\LieAlgd^\pi_{F/K}$ preserves filtered colimits and geometric realisations. For property $(5)$, we prove:

 \begin{proposition}
On objects with vanishing anchor map, the functor $\LieAlgd^\pi_{F/K}$ \mbox{satisfies}
$$\LieAlgd^\pi_{F/K}\left(V\xrightarrow{0} L_{F/K}^\vee[1]\right) \simeq \left( \liep(V) \rightarrow L_{F/K}^\vee[1]\right).$$
\end{proposition}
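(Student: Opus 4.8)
The plan is to reduce the statement to the already-established cofibre sequence of \Cref{basiccof} together with the structural description of $\LieAlgd^\pi_{F/K}$ obtained in the proof of \Cref{maindef}. First I would recall that, by construction, $\LieAlgd^\pi_{F/K}$ is the monad on $(\Mod_F)_{/L_{F/K}^\vee[1]}$ that one gets by transporting the monad $T$ on $\Mod_F$ across the equivalence $(\Mod_F)_{/L_{F/K}^\vee[1]} \simeq (\Mod_F)_{L_{F/K}^\vee/}$ of \eqref{square}; concretely, if we write $\LieAlgd^\pi_{F/K}(M \xrightarrow{f} L_{F/K}^\vee[1])$, its underlying object is the arrow classified by the map $L_{F/K}^\vee \to T(M)$ coming from the natural transformation $L_{F/K}^\vee \to T$ of \Cref{basiccof} (composed, under the equivalence, with the data of $f$). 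So the content of the proposition is precisely that when the structure map $f$ is the zero map, the anchor of $\LieAlgd^\pi_{F/K}(V \xrightarrow{0} L_{F/K}^\vee[1])$ is the composite $\liep(V) \to L_{F/K}^\vee[1]$ coming from the cofibre sequence.

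The key steps, in order, would be: (i) Reduce to $V \in \Mod_{F,\leq 0}^{\ft}$, using that both sides commute with the sifted colimits (filtered colimits and geometric realisations) that build a general $V$ from coconnective finite-type ones — this is property (1) of \Cref{maincons} for the left side, and the fact that $\liep$ commutes with sifted colimits (\Cref{pliecon}(1)) together with the observation in \Cref{maincons}(5) that any $(W \to L_{F/K}^\vee[1])$ is a geometric realisation of objects of the form $(V \xrightarrow{0} L_{F/K}^\vee[1])$ with $V$ coconnective for the right side. (ii) For such $V$, unwind the construction: $T(V) = \myovline{T}{1.0pt}(V) \simeq (F \otimes_B L_{B/K})^\vee$ with $B = F \oplus V^\vee$, and the cofibre sequence $L_{F/K}^\vee \to \myovline{T}{1.0pt}(V) \to \liep(V)$ of \Cref{basiccof} identifies $\liep(V)$ as the cofibre, hence (shifting) identifies the fibre of the anchor map $\myovline{T}{1.0pt}(V)[1] \to L_{F/K}^\vee[1]$ with $L_{F/K}^\vee$, i.e.\ exhibits the arrow $(\liep(V) \to L_{F/K}^\vee[1])$ as the image under the equivalence $(\Mod_F)_{L_{F/K}^\vee/} \simeq (\Mod_F)_{/L_{F/K}^\vee[1]}$ of $(L_{F/K}^\vee \to \myovline{T}{1.0pt}(V))$. (iii) Match this with the definition of $\LieAlgd^\pi_{F/K}$ applied to $(V \xrightarrow{0} L_{F/K}^\vee[1])$: under the equivalence, the object $(V \xrightarrow{0} L_{F/K}^\vee[1])$ corresponds to the trivial map $L_{F/K}^\vee \to V$ (the one factoring through $0$), and applying $T$ to $V$ and using the structure transformation $L_{F/K}^\vee \to T$ gives exactly $(L_{F/K}^\vee \to T(V))$ since the $V$-component was zero; transporting back yields $(\liep(V) \to L_{F/K}^\vee[1])$, as desired. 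One should check naturality of all these identifications so that the equivalence is an equivalence of functors, not merely objectwise.

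The main obstacle I anticipate is step (iii): being careful about what "vanishing structure morphism" means after passing through the equivalence $(\Mod_F)_{/L_{F/K}^\vee[1]} \simeq (\Mod_F)_{L_{F/K}^\vee/}$, and verifying that the monad multiplication/unit of $\LieAlgd^\pi_{F/K}$ genuinely restricts, on the full subcategory of objects with zero anchor, to that of $\liep$ — i.e.\ that the splitting of \Cref{basiccof} is compatible with the monad structures and not just the underlying functors. Concretely, one wants the composite $L_{F/K}^\vee \to T \to T \circ T$ versus the two ways of building the multiplication to agree on the zero-anchor locus; this should follow formally from the fact that the natural transformation $L_{F/K}^\vee \to T$ in \Cref{basiccof} is itself induced by the cotangent-complex functoriality (\Cref{threemaps}) applied to $K \to F \to B$, but spelling it out requires tracing through the identifications $\myovline{T}{1.0pt}(V) \simeq (F \otimes_B L_{B/K})^\vee$ and $\liep(V) \simeq (L_{F/B}[-1])^\vee$ with some care. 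Everything else is a routine colimit-interchange and adjunction argument.
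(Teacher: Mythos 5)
Your proposal follows essentially the same route as the paper's (very short) proof: the paper observes that the left adjoint of $\fib\colon (\Mod_F)_{/L_{F/K}^\vee[1]} \to \Mod_F$ sends $V$ to $(V \xrightarrow{0} L_{F/K}^\vee[1])$, so the free algebroid on this object is the free $T$-algebra on $V$, whose image under the equivalence in \eqref{square} is the cofibre of $L_{F/K}^\vee \to T(V)$, identified with $\liep(V)$ by \Cref{basiccof}; your steps (ii)--(iii) amount to the same computation, and your reduction (i) is the same sifted-colimit extension already implicit in the construction of $T$. One identification in your step (iii) should be corrected: under the equivalence $(\Mod_F)_{/L_{F/K}^\vee[1]} \simeq (\Mod_F)_{L_{F/K}^\vee/}$, the object $(V \xrightarrow{0} L_{F/K}^\vee[1])$ corresponds to the coproduct inclusion $L_{F/K}^\vee \to L_{F/K}^\vee \oplus V$ (the fibre of the zero map splits), not to a map $L_{F/K}^\vee \to V$; it is precisely because this is the value at $V$ of the left adjoint to the forgetful functor $(\Mod_F)_{L_{F/K}^\vee/} \to \Mod_F$ that the free algebroid on it is the free $T$-algebra on $V$ rather than some pushout. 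Finally, your anticipated obstacle about compatibility of monad multiplications is not actually needed here: the proposition only identifies the value of the underlying endofunctor $\LieAlgd^\pi_{F/K}$ on objects with vanishing structure morphism, and makes no claim that $\liep$ sits inside $\LieAlgd^\pi_{F/K}$ as a monad.
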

\begin{proof} 
 {Recall   that we say that an 
anchor map vanishes  if it is nullhomotopic (in $\Mod_F$).} By the \vspace{-3pt}universal property of fibres,  we observe that $\fib: (\Mod_{F})_{/L_{F/K}^\vee[1]} \xrightarrow{} \Mod_F$ admits a left adjoint, which sends   a chain \vspace{-2pt}complex $V\in \Mod_F$ to the zero morphism  $(V \xrightarrow{0}L_{F/K}^\vee[1] )$. Writing $\LieAlgd^\pi_{F/K} (V\xrightarrow{0} L_{F/K}^\vee[1]) \simeq (X\rightarrow L_{F/K}^\vee[1] )$, we observe from \eqref{square}  that $X$ is the cofibre of the canonical morphism $L_{F/K}^\vee \rightarrow T(V)$, which is $\liep(V) $  {by \Cref{basiccofextended}.}
\end{proof}

\begin{remark} {
For $F=K$,  we have $L_{F/K} = 0$, and the resulting equivalence $\Mod_F \simeq (\Mod_F)_{/L_{F/K}^\vee[1]}$, $V \mapsto (V \xrightarrow{} 0)$
identifies the monads $\Lie_{F}^\pi $ on $\Mod_F$ and  $\LieAlgd^\pi_{F/K}$ on $(\Mod_F)_{L_{F/K}^\vee[1]}$.
Hence the theory of $F/K$-partition Lie algebroids reduces to the theory of partition Lie algebras for $F=K$.
} 
\end{remark}

\subsection{Koszul duality} \label{KDsec} As before, let $F/K$ be a finite purely inseparable field extension.
The main aim of this section is to construct an adjunction 
$$\mathfrak{D}: \SCR_{K//F} ^{\op} \rightleftarrows \LieAlgd^\pi_{F/K}: C^\ast $$
implementing Koszul duality, and show it restricts to an equivalence on suitably finite objects.\vspace{4pt}

First, we check that the conditions of \Cref{cNdef} are unchanged by base change along $K \subset F$.

\begin{lemma}\label{tensornoetherian}
 {Let $F/K$ be a finite extension of fields.  Then $B\in \SCR_{K}$ is Noetherian if and only if $R=F\otimes_KB\in \SCR_F$ is Noetherian.}
\end{lemma}
\begin{proof}
{First we reduce to the case where $B$ is discrete.  Since $K\to F$ is  flat, we have $\pi_i(F\otimes_K B)\cong F\otimes_K \pi_i(B)$.  As 
 faithfully flat maps are closed under base change and $K\to F$
 is faithfully flat,  we see that $\pi_0(B)  \rightarrow F\otimes_K \pi_0(B)$ is faithfully flat as well.  By \cite[03C4]{stacks-project},  this implies that $F\otimes_K \pi_i(B)$ is a finite $F\otimes_K \pi_0(B)$-module if and only if $\pi_i(B)$ is a finite $\pi_0(B)$-module.  So from now on we may assume that $B$ is discrete.}

Assume first that $B$ is discrete and Noetherian.  We can write $F = K[X_1,\ldots,X_n]/I$ for some suitably chosen ideal $I$, using that $F/K$ is finite. 
The Hilbert basis theorem therefore implies that  $F \otimes_K B$ receives a surjection from a Noetherian ring, and is therefore itself Noetherian. 
Conversely, for any ascending chain of ideals $I_0 \subset I_1 \subset \ldots$ in $B$, the chain $F\otimes_K I_0 \subset F\otimes_K I_1 \subset \ldots$ in 
$F \otimes_K B$ must stabilise. Since $K \hookrightarrow F$ is faithfully flat, this implies that the original chain must stabilise too, hence $B$ is Noetherian.
\end{proof}

\begin{corollary}\label{finite}
 {If $F/K$ is a finite extension of fields,  and $B\in \SCR_{K//F}^{}$ is Noetherian, then 
$$ F \otimes_B L_{B/K} \in \Mod_{F,\geq 0}^{\ft}$$
is connective and of finite type.}
\end{corollary}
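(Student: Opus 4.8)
The plan is to reduce to the $\EE_\infty$- or simplicial absolute statement that the cotangent complex of an Artinian (indeed complete local Noetherian) augmented algebra over a field is connective and of finite type, and then transport this along the base change $K \subset F$. Concretely: by \Cref{cNprop}, $A := F \otimes_K B$ is a complete local Noetherian object of $\SCR_{F//F}$; its residue field is $F$, so completeness plus Noetherianity gives that $A$ is a filtered limit (along the $\mathfrak{m}_A$-adic tower) of Artinian $F$-algebras $A/\mathfrak{m}_A^n$ in $\SCR_F^{\art}$. One then wants $\cot_F(A) = F \otimes_A L_{A/F} \in \Mod_{F,\geq 0}^{\ft}$, and finally one identifies $F \otimes_A L_{A/F}$ with $F \otimes_B L_{B/K}$.

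For the last identification, I would use base change for the cotangent complex: for the pushout square of simplicial commutative rings obtained from $K \to F$ and $K \to B$, we have $L_{A/F} \simeq A \otimes_B L_{B/K}$ (since $A = F \otimes_K B$ and cotangent complexes are stable under flat — indeed arbitrary — base change in this derived setting, cf.\ the functoriality in \Cref{lem:triangle} and \Cref{threemaps} combined with $L_{F/K}$ contributing nothing after the relevant cofibre, or more directly the standard base-change equivalence). Tensoring down to $F$ over $A$ then gives $F \otimes_A L_{A/F} \simeq F \otimes_A (A \otimes_B L_{B/K}) \simeq F \otimes_B L_{B/K}$, which is exactly the object in the statement. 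So it suffices to prove connectivity and finite type for $\cot_F(A)$ with $A \in \SCR_{F//F}^{\cN}$.

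Connectivity of $\cot_F(A) = F \otimes_A L_{A/F}$ is immediate: $L_{A/F}$ is connective for any map of simplicial commutative rings, and $F \otimes_A (-)$ preserves connectivity. For finite type, the cleanest route is to invoke \Cref{cnoethcot}, which already asserts precisely that if $A \in \SCR_F^{\aug}$ is Noetherian then $\cot(A) \in \Mod_{F,\geq 0}^{\ft}$; here $A = F \otimes_K B$ is Noetherian by \Cref{cNprop}, so this applies directly. (If one prefers a hands-on argument: truncate, use that $L_{A/F}$ for $A = F[x_1,\dots,x_m]/I$ sits in a cofibre sequence $I/I^2 \to \Omega^1 \otimes A \to L$, and each $\pi_i$ is finitely generated because $\pi_0(A)$ is Noetherian and the higher homotopy of $A$ is finitely generated; but \Cref{cnoethcot} packages exactly this.)

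The main obstacle is bookkeeping rather than conceptual: one must make sure the base-change equivalence $L_{A/F} \simeq A \otimes_B L_{B/K}$ is applied correctly over simplicial commutative rings (not merely $\EE_\infty$-rings) — this is standard and follows from the $\mathcal{P}_\Sigma$-universal properties since both sides are sifted-colimit-preserving and agree on polynomial algebras — and that the hypotheses of \Cref{cNprop} and \Cref{cnoethcot} are genuinely met, the key input being that $F/K$ purely inseparable forces $F \otimes_K \pi_0(B)$ to remain local (handled inside \Cref{cNprop}). Everything else is formal.
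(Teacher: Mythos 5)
Your proposal is correct and follows essentially the same route as the paper: identify $F \otimes_B L_{B/K}$ with the cotangent fibre of the Noetherian augmented $F$-algebra $F \otimes_K B$ (the paper does this via the composite-of-left-adjoints formula $\cot_{K//F}(B) \simeq \cot_F(F\otimes_K B)$ rather than by explicitly invoking derived base change, but these amount to the same identification), then apply \Cref{cNprop} and \Cref{cnoethcot}. The aside about writing $A$ as a limit of Artinian quotients is unnecessary once \Cref{cnoethcot} is invoked, but it does no harm.
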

\begin{proof}
We have an equivalence $F \otimes_B L_{B/K} \simeq \cot(F \otimes_K B) =   L_{F/(F \otimes_K B)}[-1]$. Since  $F \otimes_K B$ is a  Noetherian augmented $F$-algebra, \Cref{cnoethcot}  implies the result.
\end{proof}

\begin{proposition} \label{cNprop}
 {Let $F/K$ be a finite purely inseparable field extension.  An object $B\in \SCR_{K//F}$ is  {complete local Noetherian} if and only if the augmented $F$-algebra $R=F \otimes_K B \in  \SCR_{F//F}$ has this property.}
\end{proposition}
\begin{proof}
It follows from \Cref{tensornoetherian} that $B$ is Noetherian if and only if $R$ is Noetherian.  Therefore it remains to prove that $\pi_0(B)$ is a complete local (discrete) ring if and only if $\pi_0(R)\simeq F\otimes_K\pi_0(B)$ is a complete local ring.  Here we used the faithful flatness of $K\to F$.  Therefore we may assume that $B$ and $F\otimes_K B$ are both discrete. 

The main theorem of \cite{MR360568}  shows that $R$ is local if and only if $B$ and $F\otimes_K (B/\mathfrak{m})$ are both local, where $\mathfrak{m}$ is the maximal ideal of $B$.  So it remains to show that if $B$ is local then $F\otimes_K (B/\mathfrak{m})$ is local, which follows from \cite[Proposition]{MR360568} since $F/K$ is purely inseparable.

Finally we deal with completeness.  Consider the exact sequence 
$$0\to B\to \hat{B}\to M\to 0,$$ where $B$ is complete if and only if $M=0$.  Tensoring this sequence with $F$ we obtain
$$0\to R\to \varprojlim_k \left(R/\mathfrak{m}_B^kR\right)\to F\otimes_K M\to 0.$$
Note that $M=0$ if and only if $F\otimes_K M=0$ by faithful flatness of $F/K$.  This imples that $R$ is $\mathfrak{m}_B$-complete if and only if $B$ is $\mathfrak{m}_B$-complete.   Next, we claim that the two systems of ideals $(\mathfrak{m}_BR)^k$ and $\mathfrak{m}_{R}^k$ are cofinal. Then it will follow that $R$ is $\mathfrak{m}_B$-complete if and only if it is $\mathfrak{m}_{R}$-complete by \cite[0319]{stacks-project}, and we are done. 

To show the cofinality, let $e$ be the exponent of $F/K$.
Given $x = \sum_{i=1}^n f_{i} \otimes b_{i} \in \mathfrak{m}_R$, we can write 
$x^{p^e} $ as $ \sum_{i=1}^n f_{i}^{p^e} \otimes b_{i}^{p^e}$, which belongs to $ \mathfrak{m}_R^{p^e} \cap (K \otimes_K B)  \subset K \otimes_K \mathfrak{m}_B \subset  \mathfrak{m}_B R .$
Hence  $ \mathfrak{m}_R^{p^e} \subset  \mathfrak{m}_BR$, and more generally  
$(\mathfrak{m}_R^k)^{p^e} \subset  (\mathfrak{m}_BR)^k $.
Conversely, it is clear that $(\mathfrak{m}_BR)^k \subset \mathfrak{m}_R^k$ for all $k$ since $\mathfrak{m}_R$ is maximal.

\end{proof}

 {
To construct the Koszul duality functor on simplicial commutative $K$-algebras over $F$,  we will follow the 
 strategy of \mbox{\cite[Construction 4.50]{brantner2019deformation}}.   First, let us record a simple  \mbox{categorical observation:}
\begin{proposition}
Let $c$ be an object in a compactly generated $\infty$-category $\mathcal{C}$.
Then an object  $(x\rightarrow c) \in \mathcal{C}_{/c}$ is compact if and only if $x\in \mathcal{C}$ is compact. Moreover, $\mathcal{C}_{/c}$ is compactly generated.
\end{proposition}}
\begin{proof} {
First,   observe that if $D: I \rightarrow \mathcal{C}_{/c}$, $i \mapsto (y_i \rightarrow c)$ is  a   diagram in $\mathcal{C}_{/c}$, the colimit of $D$ is given by the canonical morphism $( \colim_{I} y_i \rightarrow c)$ out  of the colimit of the \mbox{diagram $\tilde{D}: I \rightarrow \mathcal{C}$, $i \mapsto y_i$ in $\mathcal{C}$.}}

 {Now assume that we are given  $(x\xrightarrow{\epsilon } c) \in \mathcal{C}_{/c}$ with $x\in \mathcal{C}$ is compact. Given any other object $(y\rightarrow c) \in \mathcal{C}_{/c}$,  there is an equivalence
$\Map_{\mathcal{C}_{/c}}(x\rightarrow c,  y\rightarrow c )  \simeq \fib_{\epsilon} \left(\Map_{\mathcal{C}}(x,y) \rightarrow \Map_{\mathcal{C}}(x,c) \right).$
As filtered colimits commute with finite limits in spaces, this implies that $(x\rightarrow c) $ is  compact.}

 {Conversely, assume that $(x\xrightarrow{  } c) \in \mathcal{C}_{/c}$ is compact. Given a filtered diagram $D: I  \rightarrow \mathcal{C}, \  i \mapsto y_i$, we define a filtered diagram $\widetilde{D}:  I  \rightarrow \mathcal{C}_{/c},   \ i \mapsto (y_i \times c \rightarrow c)$ and obtain the  commutative square
 $$\xymatrix{
 \colim_I \Map_{\mathcal{C}_{/c}}(x\rightarrow c,  \ y_i \times c \rightarrow c) \ar[d]^{\simeq}  \ar[r]^{\ \ \  \ \ \ \ \ \ \   \simeq} & \colim_I   \Map_{\mathcal{C}}(x ,   y_i )  \ar[d] \\ 
\Map_{\mathcal{C}_{/c}}(x\rightarrow c,  \ \colim_I  (y_i \times c \rightarrow c))\ar[r]^{\ \ \  \ \ \ \ \ \ \   \simeq} &  \Map_{\mathcal{C}}(x ,   \colim_I y_i ).}$$
To see that the lower horizontal map is an equivalence, we use that
filtered colimits commute with finite limits
 in compactly generated $\infty$-categories.  As $D$ was arbitrary,  we see that $x\in \mathcal{C}$ is compact.}\vspace{2pt}

 {To see that $\mathcal{C}_{/c}$ is compactly generated,  let us fix an object $(x\rightarrow c)$ and
write $\mathcal{D}_0 \subset  \mathcal{C}_{/c}$ and $\widetilde{\mathcal{D}}_0 \subset  (\mathcal{C}_{/c})_{/(x\rightarrow c)}$
  for the full subcategories of compact objects  in $\mathcal{C}$ and $\mathcal{C}_{/c}$
 mapping to $x$ and $(x\rightarrow c)$,  respectively.
We note that $\colim_{(y\rightarrow c) \in  \widetilde{\mathcal{D}}_0} (y \rightarrow c) \simeq (
(\colim_{y \in   {\mathcal{D}}_0} y) \rightarrow c) \simeq (x \rightarrow c)$.}
\end{proof}
We may therefore deduce:

\begin{corollary}\label{corcompact} {
The $\infty$-category $\SCR_{K//F}$ is compactly generated.  An object $B\in \SCR_{K//F}$ is compact if and only if, as a $K$-algebra,  $B$ is a retract of a finitely presented $K$-algebra.}
\end{corollary}

\begin{notation} {
Write $ \SCR_{K//F}^{\wafp} \subset \SCR_{K//F}$ for the full subcategory spanned by all $B$ with $$\cot_{K//F}(B) \in \Mod_{F,\geq 0}^{\ft}.$$
Combining \Cref{corcompact} and \cite[Proposition 3.2.14]{lurie2004derived}, we see that $\SCR_{K//F}^{\wafp}$ contains all compact objects of $ \SCR_{K//F}$, and by \Cref{finite}, it also contains all Noetherian objects.}
\end{notation}

\begin{construction}[Koszul duality adjunction]\label{constructadjunction}\ 

First,   note that by  construction of the monad $\myovline{T}{1.0pt} $ in 
 \Cref{maindef} (1), there is  {a canonical functor $$\SCR_{K//F}^{\op}\rightarrow \Alg_{\myovline{T}{1.0pt}}\ ,\    B \mapsto  \cot_{K//F}(B)^\vee.  $$ 
 Taking the opposite gives a functor $\SCR_{K//F} \rightarrow \Alg_{\myovline{T}{1.0pt} } ^{\op}.$}
  
  {
As the above functor sends objects in $ \SCR_{K//F}^{\wafp}$ to  modules in $\Mod_{F, \leq 0}^{\ft}$,  
and since the monads  $\myovline{T}{1.0pt} $ and $T$ are canonically equivalent  on $\Mod_{F, \leq 0}^{\ft}$, restriction   gives rise to a functor  
    $$\mathfrak{D}: \SCR_{K//F}^{\wafp} \rightarrow \Alg_{T} ^{\op} \simeq \Alg_{\LieAlgd^\pi_{F/K}} ^{\op}.$$  
 {Since
$\SCR_{K//F}$ is compactly generated (cf.\  \Cref{corcompact})
and 
 $\cot_{K//F}$ preserves filtered colimits,  the functor 
$\cot_{K//F}$ and its restriction to $ \SCR_{K//F}^{\wafp}$ are both  left Kan extended from  \mbox{compact objects.}}
As $T$ preserves filtered colimits, this implies that the functor $\mathfrak{D}$  above is valso left Kan extended from these compact objects.}
 {We can therefore
left Kan extend further to all of $\SCR_{K//F}$ to obtain a colimit-preserving functor  $$\mathfrak{D}: \SCR_{K//F}  \rightarrow   \Alg_{\LieAlgd^\pi_{F/K}} ^{\op}.$$
Its right adjoint is called the \emph{Chevalley--Eilenberg complex} and written as  $$C^\ast:  \Alg_{\LieAlgd^\pi_{F/K}} ^{\op}  \rightarrow \SCR_{K//F}.$$}
\end{construction}
\ 

Koszul duality does not lose any information on  complete local Noetherian $K$-algebras over $F$:
\begin{theorem} \label{AffineKD}
The adjunction $(\mathfrak{D}\dashv C^\ast)$ from \Cref{constructadjunction} restricts to a contravariant  \mbox{equivalence}
$$\SCR_{K//F}^{\cN} \simeq  (\mathcal{D}_0) ^{\op}$$
between the $\infty$-category of complete local Noetherian objects in $\SCR_{K//F}$ and  the full subcategory 
$$\mathcal{D}_0 \subset \Alg_{\LieAlgd^\pi_{F/K}} $$
consisting of all  $(\mathfrak{g} \xrightarrow{\rho}   L_{F/K}^\vee[1])$ 
for which   $\fib(\rho)\in \Mod_{F,\leq 0}^{\ft}$ is coconnective and \mbox{of finite type.}
\end{theorem}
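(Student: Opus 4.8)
### Proof Plan for Theorem \ref{AffineKD}

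The plan is to bootstrap the statement from the corresponding equivalence for partition Lie algebras over $F$, namely the affine Koszul duality equivalence $\SCR_{F//F}^{\cN} \simeq (\mathcal{E}_0)^{op}$ of \cite[Theorem 1.4 / Section 4]{brantner2019deformation}, where $\mathcal{E}_0 \subset \Alg_{\Lie_F^\pi}$ consists of those partition Lie algebras whose underlying complex is coconnective of finite type. First I would observe that by \Cref{maindef}(3) and the square \eqref{square}, an object of $\Alg_{\LieAlgd^\pi_{F/K}}$ is the same datum as an object $(\mathfrak{g} \xrightarrow{\rho} L_{F/K}^\vee[1])$ together with a $T$-algebra structure, and equivalently (via the monadic forgetful functor $\mathcal{C} \to (\Mod_F)_{L_{F/K}^\vee/}$) a partition-Lie-algebra-over-$F$ structure on $\fib(\rho)$ compatible with the canonical map $L_{F/K}^\vee \to \fib(\rho)$. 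Dually, on the algebra side, \Cref{cNprop} identifies $\SCR_{K//F}^{\cN}$ with the full subcategory of $\SCR_{F//F}^{\cN}$ (via $B \mapsto F\otimes_K B$) spanned by those complete local Noetherian augmented $F$-algebras $A$ which descend, i.e. are of the form $F \otimes_K B$; and \Cref{finite} guarantees the cotangent fibre of such $B$ lands in $\Mod_{F,\leq 0}^{\ft}$, so $\mathfrak{D}$ is well-defined on this subcategory.

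The key step is to check that the adjunction $(\mathfrak{D} \dashv C^\ast)$ of \Cref{constructadjunction} is \emph{compatible} with the absolute ($F$-linear) Koszul duality adjunction under these two identifications. Concretely, for $B \in \SCR_{K//F}^{\cN}$, the underlying partition $F$-Lie algebra of $\mathfrak{D}(B)$ should be the tangent fibre of the formal moduli problem associated to $F \otimes_K B$; this follows because $\cot_{K//F}(B) \simeq \cot_F(F\otimes_K B)$ (recorded in the preliminaries), so $\myovline{T}{1.0pt}(V) \simeq (F\otimes_B L_{B/K})^\vee$ really is computing the absolute construction after base change. Thus the triangle of right adjoints (forget to $(\Mod_F)_{L_{F/K}^\vee/}$, then forget $L_{F/K}^\vee$) carries $\mathfrak{D}$ over $K$ to $\mathfrak{D}$ over $F$ composed with $F\otimes_K(-)$, and dually $C^\ast$ over $K$ is computed by $C^\ast$ over $F$ followed by the (partially defined) inverse $A \mapsto B$ of base change. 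Since the absolute adjunction is an equivalence on complete local Noetherian objects by [op.cit.], and since both the condition ``$\fib(\rho)$ coconnective of finite type'' on the Lie side and the condition ``$A = F\otimes_K B$ for some complete local Noetherian $B$'' on the ring side are detected by the respective forgetful functors, the restricted adjunction is fully faithful on both sides with matching essential images, hence an equivalence.

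The main obstacle I anticipate is matching the essential images precisely: I must show that the subcategory $\mathcal{D}_0 \subset \Alg_{\LieAlgd^\pi_{F/K}}$ cut out by ``$\fib(\rho) \in \Mod_{F,\leq 0}^{\ft}$'' corresponds, under absolute Koszul duality, exactly to those $A \in \SCR_{F//F}^{\cN}$ of the form $F \otimes_K B$ with $B \in \SCR_{K//F}^{\cN}$ — that is, I need a clean characterisation of the descended rings. For the forward inclusion this is \Cref{finite}; for the reverse, given $(\mathfrak{g} \xrightarrow{\rho} L_{F/K}^\vee[1])$ with coconnective finite-type fibre, applying absolute Koszul duality to $\fib(\rho)$ with its induced $\Lie_F^\pi$-structure and the map from $L_{F/K}^\vee$ produces a complete local Noetherian augmented $F$-algebra $A$ equipped with a map $F \otimes_K F \to A$ under $F$ (dual to $L_{F/K}^\vee \simeq \cot(F\otimes_K F)^\vee \to \fib(\rho)$); I then need to argue this extra structure exhibits $A$ as $F \otimes_K B$ for a unique $B \in \SCR_{K//F}^{\cN}$, using \Cref{cNprop} together with faithfully flat descent along $K \hookrightarrow F$ (which is where pure inseparability of $F/K$ enters, via the nilpotence of $\mathfrak{m}_R^{p^e} \subset \mathfrak{m}_B R$ argument already used in the proof of \Cref{cNprop}). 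Once descent of the ring is established, functoriality and the adjunction identities pin down that this $B$ is $C^\ast(\mathfrak{g})$, completing the identification and hence the equivalence.
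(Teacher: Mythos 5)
Your central reduction is the claim that, via \Cref{maindef}(3), a partition Lie algebroid is ``equivalently a partition-Lie-algebra-over-$F$ structure on $\fib(\rho)$ compatible with the canonical map $L_{F/K}^\vee \to \fib(\rho)$'', i.e.\ that $\Alg_{\LieAlgd^\pi_{F/K}}$ embeds into $(\Alg_{\Lie^\pi_F})_{L_{F/K}^\vee/}$. This is a genuine gap: \Cref{maindef}(3) only exhibits $\mathcal{C}$ as monadic over $(\Mod_F)_{L_{F/K}^\vee/}$ for the monad induced by $T$, and the cofibre sequence $L_{F/K}^\vee \to \myovline{T}{1.0pt}(V) \to \Lie_F^\pi(V)$ of \Cref{basiccof} shows that $T$ differs from the ``free $F$-linear partition Lie algebra under $L_{F/K}^\vee$'' monad; the extra structure carried by a $T$-algebra (informally, the failure of the bracket to be $F$-linear, cf.\ \Cref{maincons}(4), whose fibre functor is a forgetful functor, not an equivalence) is precisely what is lost in your reformulation. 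The same problem resurfaces on the ring side: $B \mapsto F\otimes_K B$ is \emph{not} fully faithful from $\SCR_{K//F}^{\cN}$ into augmented $F$-algebras, even after remembering the single map $F\otimes_K B \to F\otimes_K F$. Recovering $B$ from $F\otimes_K B$ requires the full cosimplicial descent datum along the faithfully flat map $K \to F$ (the whole Amitsur conerve), not one level of it; a one-step fibre $\mathrm{fib}\bigl(\Map(B_1, F\otimes_K B_2) \to \Map(B_1, F\otimes_K F)\bigr)$ does not compute $\Map_{\SCR_{K//F}}(B_1,B_2)$. Your appeal to ``faithfully flat descent'' at the end therefore cannot be executed with only the map $F\otimes_K F \to A$ in hand, and nothing in your plan supplies the higher coherences.

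For contrast, the paper avoids this entirely: it never factors through $F$-linear Koszul duality plus a marked map. Instead it proves directly that the adjunction $\cot_{K//F} \dashv \sqz_{K//F}$ is comonadic on $\SCR_{K//F}^{\cN}$ via the Barr--Beck--Lurie theorem --- checking that split totalisations exist, are preserved, and that the functor is conservative, using that $F$ is finite free over $K$ to commute $F\otimes_K(-)$ past totalisations and \Cref{cNprop} to descend the complete local Noetherian condition --- and then identifies $\coAlg_{\cot_{K//F}\circ\, \sqz_{K//F}}(\Mod_{F,\geq 0}^{\ft}) \simeq \Alg_T(\Mod_{F,\leq 0}^{\ft}) = \mathcal{D}_0$. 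The full $T$-algebra (equivalently, comodule) structure is exactly the descent datum your approach discards. If you want to salvage a bootstrapping strategy, you would need to show that the monad $T$ encodes the entire cobar resolution of $B$ along $K \to F$, which is in substance the comonadicity statement the paper proves.
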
 
\begin{proof} 
First, we prove that the adjunction   $$\cot_{K//F}:\SCR_{K//F}^{\cN} \leftrightarrows \Mod_{F,\geq 0}^{\ft}: \sqz_{K//F} $$ is comonadic  by verifying  the conditions of the Barr--Beck--Lurie theorem \cite[Theorem 4.7.3.5]{lurie2014higher};
 {note that the associated comonad is related to the monad  $\myovline{T}{1.0pt}$ on $\Mod_{F,\leq 0}^{\ft}$ (cf.\ \Cref{maindef})
via linear duality.}
To this end, assume that $B^\bullet  \in \SCR_{K//F}^{\cN}$ is a cosimplicial diagram for which $$\cot_{K//F}(B^\bullet) \simeq \cot_F(F \otimes_K B^\bullet) \in \Mod_{F, \geq 0}^{\ft}$$ admits a splitting. Applying \cite[Theorem 4.20.(1)]{brantner2019deformation}  to the setup of $\SCR_{F}^{\aug}$ (as explained in \cite[Section $5.2$]{brantner2019deformation}), we see that the limit  $\Tot(F \otimes_K B^\bullet) \in \SCR_F^{\aug}$ exists, is computed in $\Mod_F$, and belongs to $\SCR_F^{\cN}$. Moreover, we obtain  a canonical   equivalence
$$\cot_F(\Tot(F\otimes_{K} B^\bullet )) \xrightarrow{\ \simeq \ } \Tot( \cot_F(F \otimes_K B^\bullet) ).$$

The map $\label{equiv} F \otimes_K \Tot(B^\bullet)  \rightarrow \Tot(F \otimes_K B^\bullet) $
is an equivalence because  {the totalisation preserves finite colimits and $F$ is a finite free $K$-module.} 
 We deduce from \Cref{cNprop} that $ \Tot(B^\bullet)\in \SCR_{K//F}^{\cN}$ is complete local Noetherian,   
 computed in $\Mod_K$, and satisfies $$\cot_F(F\otimes_{K} \Tot(B^\bullet )) \xrightarrow{\ \simeq \ } \Tot( \cot_F(F \otimes_K B^\bullet) ).$$

Finally, the functor  $\cot_F(F\otimes_{K} (-)): \SCR_{K//F}^{\cN} \rightarrow \Mod_F $ is conservative 
as this is evidently true for  $F \otimes_K (-): \SCR_{K//F}^{\cN} \rightarrow \SCR_{F//F}^{\cN} $  and also holds for 
$\cot_F: \SCR_{F//F}^{\cN} \rightarrow \Mod_F$ by the proof of \cite[Theorem 4.20]{brantner2019deformation} applied to the setting in Section 5.2 of [op.cit.].   {Note that this is where completeness is crucially used.}  \vspace{3pt}

To finish the proof,  {let us observe the following triangle of adjunctions:
$$\xymatrix@+2pc{
\SCR_{K//F} \ar@<.0ex>[rd]_{\sqz_{K//F}(-^\vee)\ \ \ \ } \ar@<.5ex>[r]^{\mathfrak{D} \  }   &\ar@<.5ex>[l]^{C^\ast}  \Alg_{\LieAlgd^\pi_{F/K}} ^{\op}  \ar@<2ex>[d]_{\Free\ } \\ 
 &  \ar@<1ex>[lu]_{ \ \  \ \cot_{K//F}^\vee\ \ \ \ }  \ar@<-1.0ex>[u]_{\ \Forget} \Mod_F^{\op}}$$
For   $V\in \Mod_{F,\leq 0}^{\ft} $, the counit of the adjunction $\mathfrak{D} \dashv C^\ast$ gives a map $ \Free(V) \rightarrow \mathfrak{D}(C^\ast(\Free(V))) $  in $ \Alg_{\LieAlgd^\pi_{F/K}} $, which 
is an equivalence because applying the conservative forgetful functor   gives the equivalence 
$\Forget( \Free(V)) = T(V) \xrightarrow{\simeq} \myovline{T}{1.0pt}(V) = \cot_{K//F}^\vee( \sqz_{K//F}^\vee(V)). $
Since the composite $ C^\ast(\Free(V)) \rightarrow C^\ast( \mathfrak{D}(C^\ast(\Free(V))))  \rightarrow  C^\ast(\Free(V))$ of unit and counit is always an equivalence,  and we may therefore deduce that  the unit 
$$  B  \rightarrow
C^\ast(\mathfrak{D}(B))$$ is an  equivalence for all $B = \sqz_{K//F}(V^\vee)$ with $V\in \Mod_{F,\leq 0}^{\ft}.$} 

For a general $B\in \SCR_{K//F}^{\cN}$, we combine   the comonadicity established above with the  equivalence
 \mbox{$(-)^\vee: \Mod^{\ft}_{F, \leq 0} \simeq (\Mod^{\ft}_{F, \geq 0}) ^{\op}$}
to write  $B \simeq \Tot(B^\bullet)$ as a totalisation preserved by $\mathfrak{D}$ of a diagram of trivial square-zero extensions. 
Here, we use the  {cobar} resolution coming from the  {comonadic}  adjunction $\cot_{K//F}:\SCR_{K//F}^{\cN} \leftrightarrows \Mod_{F,\geq 0}^{\ft}: \sqz_{K//F} $, cf.\ \cite[Proposition 4.7.3.3]{lurie2014higher}.\vspace{2pt}

We may therefore deduce that   the unit $$\eta_B: B\rightarrow C^\ast (\mathfrak{D}(B))$$ is an equivalence  for all $B\in \SCR_{K//F}^{\cN}$, which in turn implies that $\mathfrak{D}|_{\SCR_{K//F}^{\cN}}$ is fully faithful.
 {Indeed, this last implication follows from a well-known categorical argument which we  recall for the reader's convenience.
Given
$B_1, B_2 \in \SCR_{K//F}^{\cN}$,  naturality of the unit shows that the composite  
\begin{equation*} \label{eq1}
\begin{split}
\Map_{\SCR_{K//F}}(B_1, B_2)& \xrightarrow{ \ \      \mathfrak{D}(-)  \ } \Map_{\Alg_{\LieAlgd^\pi_{F/K}}}\left(\mathfrak{D}(B_2), \mathfrak{D}(B_1)\right) \\
 & \xrightarrow{ \ \    C^\ast(-) \ } \Map_{\SCR_{K//F}}\left(C^\ast(\mathfrak{D}(B_1)),C^\ast(\mathfrak{D}(B_2))\right) \\
 & \xrightarrow{\ - \ \circ \ \eta_{B_1}} \Map_{\SCR_{K//F}}\left( B_1,  C^\ast(\mathfrak{D}(B_2))\right) 
\end{split}
\end{equation*} 
is given by postcomposition with the unit $\eta_{B_2}: B_2\rightarrow C^\ast (\mathfrak{D}(B_2))$ and therefore an equivalence.
As   the second two arrows compose to an equivalence by the defining property of adjunctions (cf.\ \cite[Section 5.2.2]{lurie2009higher})),  the first map is an equivalence and so  $\mathfrak{D}|_{\SCR_{K//F}^{\cN}}$ is fully faithful.
}

To identify the essential image of $\mathfrak{D}|_{\SCR_{K//F}^{\cN}}$, we unravel the definitions to factor this functor as a 
chain of equivalences,   once more using the comonadicity result established above:
$$\SCR_{K//F}^{\cN} \simeq \coAlg_{\cot_{K//F}\circ \sqz_{K//F}}(\Mod_{F,\geq 0}^{\ft})
\simeq \Alg_T(\Mod_{F,\leq 0}^{\ft}) \simeq \mathcal{D}_0.$$
\end{proof}

\subsection{An interlude on hypercoverings}  

 {The aim of this section and the next is to relate our partition Lie algebroids with parition Lie algebras and formal moduli problems via two natural functors described in  \Cref{maincons} (3) and (4).  Since this requires more knowledge of derived algebraic geometry than we have used thus far, the reader interested in the Galois correspondence may wish to skip to section 4 in which we finish the proof of \Cref{fundamental_theorem}.}

To construct functors on partition Lie \mbox{algebroids,} we will use the theory of hypercoverings, which allows us to proceed in two steps. First, we  define a functor on certain small Lie algebroids which admit \mbox{an interpretation in terms of rings,}
second, we extend it via distinguished  simplicial  resolutions known as hypercoverings.

The theory of hypercoverings originated in the work of Verdier   \cite{artin1972theorie}, and have since   been revisited by many authors, most recently also in a   higher categorical setting (cf.\ e.g.\ \cite{dugger2004hypercovers},  \cite[Section 3.2]{toen2005homotopical}, \cite[Section 6.5.3]{lurie2009higher} \cite[Section  7.2.1]{lurie2014higher}, or \cite[Appendix]{brantner2019deformation}).

Rather than delving into the general theory, we shall only discuss  hypercoverings  in the 
context of partition Lie algebroids, relying on the  \vspace{2pt} more general results established  in \cite[Appendix]{brantner2019deformation}.

 Given a finite purely inseparable field extension $F/K$, we will use two distinguished classes of objects to build general Lie algebroids. \vspace{-3pt}

\begin{notation} \label{smallalgebroids}
Write $\mathcal{C}_0 \subset \mathcal{C}_1 \subset  \mathcal{C}:=\Alg_{\LieAlgd^\pi_{F/K} }$ 
for the full subcategories of free algebras
$$\LieAlgd^\pi_{F/K}\left(V\xrightarrow{0} L_{F/K}^\vee[1]\right), $$ 
where $V \in \Mod_F$ is  assumed to be perfect coconnective for $ \mathcal{C}_0$ and  coconnective for $ \mathcal{C}_1$, respectively.
\end{notation}
We introduce the class of morphisms which one can think of as some kind of  covers:
\begin{definition}[Coconnective anchor surjections] A map  of $F/K$-partition Lie algebroids 
$$\xymatrix{
\mathfrak{g}_1  \ar[r] \ar[rd]_{\rho_1} &  \mathfrak{g}_2 \ar[d]^{\rho_2} \\ 
& L_{F/K}^\vee[1]}$$

is a \textit{coconnective anchor surjection} if it induces a surjection $\pi_i(\fib(\rho_1)) \rightarrow \pi_i(\fib(\rho_2))$ \mbox{for all $i \leq 0$.}
\end{definition}

We define a  distinguished family of simplicial resolutions:
\begin{definition}[Hypercoverings]\label{hyperhyper}
Let $Z = (\mathfrak{g} \rightarrow L_{F/K}^\vee[1])$ be an $F/K$-partition Lie algebroid. \mbox{A \textit{hypercovering} for} $Z$ is an augmented simplicial $F/K$-partition Lie algebroid $$X_\bullet \rightarrow Z $$
 satisfying the following conditions:
\begin{enumerate}
\item Each object  $X_n$ belongs to  $ \mathcal{C}_1$;
\item Each map $X_i \rightarrow Z$ is a coconnective anchor surjection;
\item All matching objects $M_n(X_\bullet)$ and all latching objects $L_n(X_\bullet)$ exist in $(\mathcal{C}_{/Z})_{}'$, the $\infty$-category
of all $F/K$-partition Lie algebroids mapping to $Z$ via an anchor surjection  (cf.\ \cite[Definition 8.1]{brantner2019deformation} for a definition of matching and latching objects).
\item Each natural map $X_n \rightarrow M_n(X_\bullet)$ is a coconnective anchor surjection. Each natural map $L_n(X_\bullet) \rightarrow X_n$ expresses $X_n$ as a coproduct of $L_n(X_\bullet)$ with an object in the subcategory $(\mathcal{C}_{1/Z})' 
\subset (\mathcal{C}_{/Z})_{}'$, spanned by all $X \rightarrow Z$ with $X\in \mathcal{C}_1$.
\end{enumerate}
\end{definition} 
Such hypercoverings exist in abundance: 
\begin{proposition}[Hypercoverings  and left  Kan extensions] \label{hypkan}
Any    $Z\in \Alg_{\LieAlgd^\pi_{F/K}}$ admits a hypercovering $X_\bullet \rightarrow Z$, which is in fact  a colimit diagram.
If $\mathcal{D}$ is a  presentable $\infty$-category and   $F: \Alg_{\LieAlgd^\pi_{F/K}} \rightarrow \mathcal{D}$ is left Kan extended from $\mathcal{C}_0$, we have a natural equivalence 
$|F(X_\bullet)| \simeq F(Z). $
Moreover, any sifted-colimit-preserving functor $\Alg_{\LieAlgd^\pi_{F/K}} \rightarrow \mathcal{D}$  is left Kan extended from $\mathcal{C}_0$.
\end{proposition}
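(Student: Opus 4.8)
The plan is to deduce all three assertions from the general theory of hypercoverings developed in \cite[Appendix]{brantner2019deformation}, the only substantive work being to check that our situation meets its hypotheses. Recall from \Cref{maindef} that $\mathcal{C} = \Alg_{\LieAlgd^\pi_{F/K}}$ is monadic over $(\Mod_F)_{/L_{F/K}^\vee[1]}$ along a conservative forgetful functor preserving sifted colimits, and that $\LieAlgd^\pi_{F/K}$ preserves filtered colimits and geometric realisations; here $\mathcal{C}_0$ plays the role of the ``compact free generators'' and $\mathcal{C}_1$ of the objects permitted in the levels of a hypercovering. The first point I would establish is that $\mathcal{C}_1$ lies in the filtered-colimit closure of $\mathcal{C}_0$, and that $\mathcal{C}_0$ generates $\mathcal{C}$ under sifted colimits: over the field $F$ every coconnective module splits as a direct sum of shifts of $F$, hence is a filtered colimit of perfect coconnective modules, and $V \mapsto \LieAlgd^\pi_{F/K}(V \xrightarrow{0} L_{F/K}^\vee[1])$ is a composite of left adjoints (\Cref{maincons}(5)), so it preserves filtered colimits; moreover, combining the monadic bar resolution of an arbitrary algebra with \Cref{maincons}(5) and the fact that the free-algebra functor preserves geometric realisations shows that every object of $\mathcal{C}$ is a geometric realisation of objects of $\mathcal{C}_1$.

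Next I would construct a hypercovering of a given $Z = (\mathfrak{g} \xrightarrow{\rho} L_{F/K}^\vee[1])$ by the standard skeleton-by-skeleton induction inside the $\infty$-category $(\mathcal{C}_{/Z})'$ of partition Lie algebroids mapping to $Z$ through a coconnective anchor surjection. Having built $\sk_{n-1}X_\bullet$, with latching object $L_n(X_\bullet)$ and matching object $M_n(X_\bullet)$ (whose existence is part of the generation statement above, as in \cite[Section 8]{brantner2019deformation}), one sets $X_n := L_n(X_\bullet) \amalg \LieAlgd^\pi_{F/K}(V_n \xrightarrow{0} L_{F/K}^\vee[1])$ for a coconnective $V_n$ chosen large enough that the resulting map out of $X_n$ is a coconnective anchor surjection onto the relevant matching object over $Z$. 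Such a $V_n$ exists because a free algebra on a coconnective module can realise any surjection on $\pi_{\leq 0}$ of the anchor fibre of a fixed target, which follows from the explicit description of free algebras and their anchor fibres in \Cref{maincons}(5) and \Cref{basiccof} together with a descending induction on homotopical degree. This produces a hypercovering in the sense of \Cref{hyperhyper}; the general machinery of [op.\ cit.] then identifies the augmentation $|X_\bullet| \to Z$ with an equivalence, so $X_\bullet \to Z$ is a colimit diagram, essentially because on anchor fibres it becomes an ordinary hypercovering in $\Mod_{F,\leq 0}$, for which descent holds.

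For the Kan extension statements, suppose $F \colon \mathcal{C} \to \mathcal{D}$ is left Kan extended from $\mathcal{C}_0$. Then $F(Z) \simeq \colim_{(\mathcal{C}_0)_{/Z}} F|_{\mathcal{C}_0}$, and the hypercovering just constructed provides --- by the hypercovering theorem of [op.\ cit.] --- a cofinal resolution of this colimit by the objects $X_n \in \mathcal{C}_1$, each of which is in turn a filtered colimit of objects of $\mathcal{C}_0$ cofinal over $(\mathcal{C}_0)_{/X_n}$ by compactness; this yields $|F(X_\bullet)| \simeq F(Z)$. For the final sentence, given a sifted-colimit-preserving $G \colon \mathcal{C} \to \mathcal{D}$ I would check that the canonical map $\Lan_{\mathcal{C}_0 \hookrightarrow \mathcal{C}}(G|_{\mathcal{C}_0}) \to G$ is an equivalence: it is one on $\mathcal{C}_0$ by construction; both sides preserve filtered colimits --- the source by the properties of $\Lan$ from $\mathcal{C}_0$ established in [op.\ cit.] --- hence it is an equivalence on $\mathcal{C}_1$; and both sides preserve geometric realisations, so the colimit diagrams $X_\bullet \to Z$ force it to be an equivalence throughout $\mathcal{C}$. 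In particular $G$ is left Kan extended from $\mathcal{C}_0$.

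The step I expect to be the real obstacle is the verification in the first paragraph: making precise the sense in which $\mathcal{C}_0$ generates $\mathcal{C}$, and that left Kan extension from $\mathcal{C}_0$ is well behaved (preserves sifted colimits, and matching and latching objects exist in $(\mathcal{C}_{/Z})'$). This is delicate precisely because we work in a coconnective, ``pro-coherent'' setting in which the naive claim that $\mathcal{C}$ is compactly generated by $\mathcal{C}_0$ is false; the way around this is to exploit the duality $(-)^\vee \colon \Mod_{F,\leq 0}^{\ft} \simeq (\Mod_{F,\geq 0}^{\ft})^{op}$ together with the fact (\Cref{maindef}) that $\LieAlgd^\pi_{F/K}$ is reconstructed by left Kan extension from its restriction to finite-type coconnective modules --- which is exactly the framework of \cite[Appendix]{brantner2019deformation}. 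Once that bookkeeping is in place, the rest is formal manipulation of colimits.
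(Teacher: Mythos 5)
Your proposal is correct and follows essentially the same route as the paper: both reduce the statement to the general hypercovering machinery of the Appendix of \cite{brantner2019deformation}, applied to the sifted-colimit-preserving monad $T$ on $\Mod_F$ via the identification of $\mathcal{C}_0\subset\mathcal{C}_1$ and the coconnective anchor surjections with the corresponding data on anchor fibres. The only difference is one of presentation — you re-derive by hand (the skeletal induction, the choice of $V_n$, descent on anchor fibres) what the paper obtains by directly citing Construction 8.9, Lemma 8.6, and Example 8.12 of [op.\ cit.] after transporting along the equivalence $\Alg_{\LieAlgd^\pi_{F/K}}\simeq\Alg_T$.
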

\begin{proof}
Recall the   sifted-colimit-preserving monad $T$ on $\Mod_F$ constructed in 
\Cref{maindef} (2).  {Starting with the  set 
 $\mathcal{F}_0$ of all $T$-algebras of the form  $T(V)$ with $V\in \Perf_{F, \leq 0}$ perfect coconnective, \cite[Construction 8.9]{brantner2019deformation} gives a weakly orthogonal pair $(\mathcal{F}_1, \mathscr{S})$ in the sense of  Definition 8.4 in [op.cit.].   
Here $ \mathcal{F}_1$ is the class of objects which are coproducts of objects in  $ \mathcal{F}_0$; hence $\mathcal{F}_1$ consists of all $T$-algebras  of the form $T(V)$ with $V\in \Mod_{F, \leq 0}$.
The class of morphisms $\mathscr{S}$ consists of all $f: X_1\rightarrow X_2$ in $\Alg_T$
for which the induced map $\pi_0\Map_{\Alg_T}(F,X_1) \rightarrow \pi_0 \Map_{\Alg_T}(F,X_2)$ is surjective for all $F\in \mathcal{F}_0 $. Unravelling the definitions, we see that $\mathscr{S}$ is
given by the family of morphisms of $T$-algebras $X_1 \rightarrow X_2$ which induce surjections on $\pi_i$ for all $i\leq 0$. }

By  Lemma 8.6 in [op.cit.], any $B\in \Alg_T$ admits an $(\mathcal{F}_1,\mathscr{S})$-hypercovering $A_\bullet \rightarrow B$ in the sense of Definition 8.7 in [op.cit.]. We explain in Example 8.12 in [op.cit.] that $A_\bullet \rightarrow B$ is a  colimit diagram, that any  $G: \Alg_T \rightarrow \mathcal{D}$  which is left Kan extended from $\mathcal{F}_0$ satisfies $|G(A_\bullet)| \simeq G(B)$, and    that any sifted-colimit-preserving functor $F: \Alg_T \rightarrow \mathcal{D}$  is left \mbox{Kan extended from $\mathcal{F}_0$.}
 
Finally, we recall the \vspace{-3pt} equivalence 
$\Alg_{\LieAlgd^\pi_{F/K}} \simeq \Alg_T(\Mod_F)$ from \Cref{maindef} (3),  lifting the fibre functor $(\mathfrak{g} \xrightarrow{\rho} L_{F/K}^\vee[1]) \mapsto \fib(\rho)$. Unraveling the definitions, we use diagram \eqref{square} in the proof of \Cref{maindef} to observe that $\mathcal{C}_0 \subset  \mathcal{C}_1$ and $S$
correspond to 
$\mathcal{F}_0 \subset  \mathcal{F}_1$ and $\mathscr{S}$  under this equivalence, which implies the various  assertions.
\end{proof}

We can now deduce a convenient extension  result:
\begin{proposition}\label{functor_extensions}
Let $U: \mathcal{D} \rightarrow \mathcal{E}$ be a functor of presentable $\infty$-categories creating sifted colimits.
Let $\mathcal{C}_0\subset \mathcal{C}$ be as in \Cref{smallalgebroids},  and assume we are given a functor $G_0: \mathcal{C}_0 \rightarrow  \mathcal{D}$    for which the composite  $U \circ G_0: \mathcal{C}_0 \rightarrow   \mathcal{E}$ admits a sifted-colimit-preserving extension   \mbox{$ {H}: \mathcal{C} \rightarrow   \mathcal{E}$.} 
Then $G_0$ admits  a unique sifted-colimit-preserving extension  $ G: \mathcal{C} \rightarrow \mathcal{D}$  filling the following diagram:
$$\xymatrix{
 \mathcal{C}_0   \ar[r]^{G_0} \ar@{^{(}->}[d] &  \mathcal{D} \ar[d]^{U} \\ 
\mathcal{C}  \ar[r]_{ {H}}\ar@{-->}[ru]^G& \mathcal{E}.}$$

\end{proposition}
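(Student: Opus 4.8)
The plan is to build the extension $G$ by left Kan extension from $\mathcal{C}_0$ and then use the hypercovering machinery of \Cref{hypkan} to verify that the result is genuinely sifted-colimit-preserving and that it lifts $K$ through $U$. First I would form the left Kan extension $G := \Lan_{\mathcal{C}_0 \hookrightarrow \mathcal{C}} G_0 : \mathcal{C} \rightarrow \mathcal{D}$, which exists since $\mathcal{D}$ is presentable and $\mathcal{C}_0$ is (essentially) small. Composing with $U$ and using that $U$ preserves colimits (being sifted-colimit-creating between presentable categories, hence in particular preserving sifted colimits; general colimits are handled by the usual decomposition) we get $U \circ G \simeq \Lan_{\mathcal{C}_0 \hookrightarrow \mathcal{C}}(U \circ G_0)$. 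On the other hand $K$ is by hypothesis a sifted-colimit-preserving extension of $U \circ G_0$, so by the last sentence of \Cref{hypkan} it too is left Kan extended from $\mathcal{C}_0$. Two functors out of $\mathcal{C}$ that are both left Kan extended from $\mathcal{C}_0$ and agree on $\mathcal{C}_0$ are canonically equivalent, hence $U \circ G \simeq K$, which gives the commutativity of the triangle.

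Next I would check that $G$ preserves sifted colimits. This is where \Cref{hypkan} does the real work: given $Z \in \mathcal{C}$, choose a hypercovering $X_\bullet \rightarrow Z$, which by \Cref{hypkan} is a colimit diagram with all $X_n$ built (as geometric realisations/filtered colimits) from objects of $\mathcal{C}_0$, and such that any functor left Kan extended from $\mathcal{C}_0$ carries $X_\bullet$ to a colimit diagram. Since $G$ is left Kan extended from $\mathcal{C}_0$, we get $|G(X_\bullet)| \simeq G(Z)$, and since $U$ preserves these geometric realisations and $U \circ G \simeq K$ is sifted-colimit-preserving, one can bootstrap to show $G$ itself commutes with sifted colimits: concretely, for a sifted diagram $p : I \rightarrow \mathcal{C}$ one resolves $\colim p$ and each $p(i)$ by hypercoverings, uses that $G$ agrees with the (sifted-colimit-preserving, by \Cref{hypkan}) left Kan extension on all the pieces assembled from $\mathcal{C}_0$, and that $U$ detects equivalences to conclude $G(\colim p) \simeq \colim G(p)$. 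Alternatively — and more cleanly — one observes that $U$ creates sifted colimits, so it suffices to know $U \circ G \simeq K$ preserves them, which it does by hypothesis; this immediately forces $G$ to preserve sifted colimits and makes the bootstrapping unnecessary.

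Finally, uniqueness: any sifted-colimit-preserving functor $\mathcal{C} \rightarrow \mathcal{D}$ is automatically left Kan extended from $\mathcal{C}_0$ by the last sentence of \Cref{hypkan}, so any sifted-colimit-preserving $G'$ extending $G_0$ satisfies $G' \simeq \Lan_{\mathcal{C}_0 \hookrightarrow \mathcal{C}} G_0 \simeq G$, and the identification is compatible with the data already fixed. The main obstacle is the middle step — making precise that $G$, defined purely as a left Kan extension, actually preserves sifted colimits rather than merely filtered colimits or geometric realisations separately; the cleanest route is to exploit that $U$ creates sifted colimits, reducing the claim to the corresponding property of $K$, so that \Cref{hypkan} is needed only to identify $K$ and $U\circ G$ as both being left Kan extended from $\mathcal{C}_0$ and hence equivalent. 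Some care is also required because $\mathcal{C}_0$ consists of free algebras on perfect coconnective modules, so one must make sure the relevant Kan extensions are computed along the fully faithful inclusion $\mathcal{C}_0 \hookrightarrow \mathcal{C}$ and that \Cref{hypkan} applies verbatim, which it does since that proposition is stated for exactly this pair $\mathcal{C}_0 \subset \mathcal{C}$.
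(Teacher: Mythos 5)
Your overall architecture (define $G$ as the left Kan extension of $G_0$, compare it with $K$, then use the creation of sifted colimits by $U$ to transfer preservation from $K$ to $G$, and get uniqueness from \Cref{hypkan}) matches the paper's, but there is a genuine gap at the pivotal step where you claim $U\circ G \simeq \Lan_{\mathcal{C}_0\hookrightarrow\mathcal{C}}(U\circ G_0)$. Your justification --- that $U$ preserves colimits because it creates sifted ones and ``general colimits are handled by the usual decomposition'' --- does not hold: a functor creating sifted colimits need not preserve coproducts (the forgetful functor from algebras over a monad, which is exactly what $U$ is in the two applications of this proposition, is the standard counterexample), and the pointwise formula for $\Lan_{\mathcal{C}_0}(G_0)$ computes $G(Z)$ as a colimit over the comma category $(\mathcal{C}_0)_{/Z}$, which is not sifted. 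The same issue undermines your ``cleaner'' variant, where you invoke \Cref{hypkan} to say $U\circ G$ is left Kan extended from $\mathcal{C}_0$: that proposition applies to sifted-colimit-preserving functors, and whether $U\circ G$ preserves sifted colimits is essentially what is being proved, so the argument is circular as stated.

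The paper's proof repairs exactly this point. Since $K$ \emph{is} left Kan extended from $\mathcal{C}_0$ (by \Cref{hypkan}, as it preserves sifted colimits by hypothesis) and $(U\circ G)|_{\mathcal{C}_0}\simeq U\circ G_0 \simeq K|_{\mathcal{C}_0}$, the universal property yields only a natural transformation $\alpha: K\to U\circ G$, not an equivalence. One then checks that $\alpha$ is an equivalence in two stages in which only sifted colimits ever appear: first on $\mathcal{C}_1$, because every object there is a filtered colimit of objects of $\mathcal{C}_0$ and $G$, $U$, $K$ all preserve filtered colimits ($G$ because it is left Kan extended from compact objects, $U$ because filtered colimits are sifted); then on an arbitrary $Z$ by choosing a hypercovering $X_\bullet\to Z$ with terms in $\mathcal{C}_1$, for which $|G(X_\bullet)|\simeq G(Z)$ by \Cref{hypkan}, $|K(X_\bullet)|\simeq K(Z)$ since $K$ preserves realisations, and $U$ preserves the realisation because it is sifted. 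Once $\alpha$ is known to be an equivalence, your remaining steps (transferring sifted-colimit-preservation from $K$ to $G$ along the creation property of $U$, and uniqueness via \Cref{hypkan}) go through exactly as in the paper.
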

\begin{proof}
The left Kan extension $G := \Lan_{\mathcal{C}_0}^{\mathcal{C}}(G_0)$  
   to $\mathcal{C}$  exists by \cite[4.3.2.14]{lurie2009higher} \mbox{as  $\mathcal{D}$ is presentable.}  {We will now verify that  $G$  satisfies the conclusion in the proposition, thereby proving 
 existence.}

As $ {H}: \mathcal{C} \rightarrow \mathcal{E}$ preserves sifted colimits, it is left Kan extended from $\mathcal{C}_0$ by \Cref{hypkan}, and the equivalence  $ {H}|_{\mathcal{C}_0} \simeq U \circ G_0$ extends to a natural transformation 
\mbox{$\alpha:  {H} \rightarrow U \circ G$.}
As $G$ is left Kan extended from its values on compact objects, it preserves filtered colimits, and the same holds true for $U$ and $ {H}$. Since any object in $\mathcal{C}_1$ is a filtered colimit of objects in $\mathcal{C}_0$, we deduce that 
$\alpha$ is an equivalence on $\mathcal{C}_1$.
Given a general  $Z\in \mathcal{C}$. We use \Cref{functor_extensions} to
pick a hypercovering $X_\bullet \rightarrow Z$
, which  is also a colimit diagram. We then consider the following commuting square:

$$\xymatrix{
| {H}(X_\bullet)|  \ar[r] \ar[d] &   {H}(Z) \ar[d] \\ 
|(U\circ G)(X_\bullet)|  \ar[r]& (U\circ G)(Z)}$$

The top horizontal map is an equivalence since  $ {H}$ preserves geometric realisations, 
the bottom  since $U$ preserves realisations and $|G(X_\bullet)| \simeq G(Z)$ by \Cref{hypkan}, and the left vertical  map since $\alpha$ is an equivalence on all objects in $\mathcal{C}_1$ by our previous considerations. 
Hence $\alpha:  {H}  \xrightarrow{\simeq} U \circ G$ is an equivalence, which implies that $G$ preserves sifted colimits as $ {H}$ preserves  and $U$ creates them. 

The functor $G$ is unique as any sifted-colimit-preserving functor $\mathcal{C} \rightarrow \mathcal{D}$ must be left Kan extended from $\mathcal{C}_0$ by \Cref{hypkan}.\vspace{-2pt} 
\end{proof}

\subsection{Functors on partition Lie algebroids}
 {The main goal of this section is to construct two  functors  
$ \Alg_{\LieAlgd^\pi_{F/K}} \rightarrow (\Alg_{\Lie^\pi_{F}})_{L_{F/K}^\vee/}, $
and $\Alg_{\LieAlgd^\pi_{F/K}} \rightarrow (\Alg_{\Lie^\pi_{K}})_{/L_{F/K}^\vee[1]}$
on partition Lie algebroids which enhance  the fiber functor and forgetful functor as described in \Cref{maincons} (4) and (3), respectively. This illustrates} that  partition Lie algebroids really do behave like a derived version of classical Lie algebroids. 
We start  with the fibre functor described in \mbox{\Cref{maincons} $(4)$:}

\begin{proposition}[Fibre of the anchor]\label{fibre_anchor}
Consider the functor 
 \mbox{$ \fib: \Alg_{\LieAlgd^\pi_{F/K}}\hspace{-9pt} \rightarrow (\Mod_F)_{L_{F/K}^\vee/}$} sending $(\mathfrak{g} \xrightarrow{\rho} L_{F/K}^\vee[1])$ to $\fib(\rho)$. Then $\fib$ admits a  {canonical} sifted-colimit-preserving lift
	$$ \Alg_{\LieAlgd^\pi_{F/K}} \rightarrow (\Alg_{\Lie^\pi_{F}})_{L_{F/K}^\vee/}, $$
	where $L_{F/K}^\vee$  is the $F$-partition Lie \vspace{3pt} algebra of \Cref{firstex} (1)
\end{proposition}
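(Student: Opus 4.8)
The plan is to invoke the extension principle of \Cref{functor_extensions}. Take $\mathcal{D} := (\Alg_{\Lie^\pi_F})_{L_{F/K}^\vee/}$ and $\mathcal{E} := (\Mod_F)_{L_{F/K}^\vee/}$, and let $U\colon \mathcal{D}\to\mathcal{E}$ be the forgetful functor. Since the monad $\Lie^\pi_F$ on $\Mod_F$ preserves sifted colimits, the monadic forgetful functor $\Alg_{\Lie^\pi_F}\to\Mod_F$ creates them; combining this with \Cref{limcolimunder} applied to the two undercategories (sifted diagram categories being weakly contractible) shows that $U$ creates sifted colimits, and both $\mathcal{D}$ and $\mathcal{E}$ are presentable. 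Under the equivalence $(\Mod_F)_{/L_{F/K}^\vee[1]}\simeq(\Mod_F)_{L_{F/K}^\vee/}$ from the proof of \Cref{maindef}, the functor $\fib$ in the statement is the forgetful functor $\Alg_{\LieAlgd^\pi_{F/K}}\to(\Mod_F)_{L_{F/K}^\vee/}$, which preserves sifted colimits by \Cref{maindef}(3). So it is enough to build a functor $G_0\colon\mathcal{C}_0\to\mathcal{D}$ on the free algebroids of \Cref{smallalgebroids} together with an equivalence $U\circ G_0\simeq\fib|_{\mathcal{C}_0}$; \Cref{functor_extensions} will then produce the required sifted-colimit-preserving lift.

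To construct $G_0$: by \Cref{constructadjunction}, a free algebroid $\mathfrak{g}_V := \LieAlgd^\pi_{F/K}(V\xrightarrow{0}L_{F/K}^\vee[1])$ with $V\in\Perf_{F,\leq 0}$ equals $\mathfrak{D}(B_V)$ for the trivial square-zero extension $B_V := F\oplus V^\vee\in\SCR_{K//F}$. Base change along $K\to F$ turns $B_V$ into an augmented $F$-algebra $F\otimes_K B_V\in\SCR_{F}^{\aug}$ (with augmentation $F\otimes_K B_V\to F\otimes_K F\to F$), and applying the tangent-Lie-algebra functor of \Cref{pliecon}(4) attaches to it a partition Lie algebra over $F$; unwinding the proof of \Cref{basiccof}, its underlying $F$-module is $\cot_F(F\otimes_K B_V)^\vee\simeq\cot_{K//F}(B_V)^\vee\simeq\overline{T}(V)$. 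Since $F$ is the terminal object of $\SCR_{K//F}$, the augmentations $B_V\to F$ form a natural transformation; applying $F\otimes_K(-)$ and the tangent-Lie-algebra functor yields a natural map of $F$-partition Lie algebras from the tangent Lie algebra of $F\otimes_K F$, which is $L_{F/K}^\vee$ by \Cref{firstex}(1), into that of $F\otimes_K B_V$. This defines the functor $G_0\colon\mathcal{C}_0\to\mathcal{D}$.

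Next one identifies $U\circ G_0$ with $\fib|_{\mathcal{C}_0}$. The underlying arrow of $G_0(\mathfrak{g}_V)$ in $\mathcal{E}$ is the map $L_{F/K}^\vee\to\overline{T}(V)$ obtained as the $F$-linear dual of the transitivity map $F\otimes_{B_V}L_{B_V/K}\to L_{F/K}$ for $K\to B_V\to F$; by the proof of \Cref{basiccof} this is precisely the canonical structure morphism exhibiting $\overline{T}(V)$ as the fibre of the anchor map of $\mathfrak{g}_V$, equipped with its canonical map from $L_{F/K}^\vee$. Chasing through the identifications gives $U\circ G_0\simeq\fib|_{\mathcal{C}_0}$, and \Cref{functor_extensions} then supplies a sifted-colimit-preserving functor $G\colon\Alg_{\LieAlgd^\pi_{F/K}}\to\mathcal{D}$ extending $G_0$ with $U\circ G\simeq\fib$ — this is the desired lift. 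For the uniqueness clause, any sifted-colimit-preserving lift of $\fib$ is left Kan extended from $\mathcal{C}_0$ by \Cref{hypkan}, and restricts on $\mathcal{C}_0$ to $G_0$, since the compatible $F$-partition Lie structure on $\fib$ of a free algebroid $\mathfrak{D}(B_V)$ is pinned down by the construction above; hence it agrees with $G$.

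The main point requiring care is the compatibility asserted in the third paragraph: one must check that the map on tangent Lie algebras induced by the augmentation $F\otimes_K B_V\to F\otimes_K F$ agrees, on underlying $F$-modules, with the dual transitivity map appearing in \Cref{basiccof}. This is a diagram chase using the transitivity cofibre sequences of \Cref{lem:triangle} and their functoriality \Cref{threemaps}, together with the base-change identity $\cot_{K//F}(B)\simeq\cot_F(F\otimes_K B)$. Everything else — presentability, $U$ creating sifted colimits, and $\fib$ preserving them — is routine given the results already in place.
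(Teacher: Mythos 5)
Your proposal is correct and follows essentially the same route as the paper: both identify the free algebroids in $\mathcal{C}_0$ with trivial square-zero extensions $F\oplus V^\vee$ via Koszul duality, base-change along $K\to F$ to land in augmented $F$-algebras over $F\otimes_K F$, apply the tangent partition-Lie-algebra construction (the paper packages your "augmentation gives a natural map from the tangent Lie algebra of $F\otimes_K F$" as a composite through $\bigl((\SCR_{F//F}^{\cN})_{/F\otimes_K F}\bigr)^{\mathrm{op}}$ and \cite[Corollary 5.46]{brantner2019deformation}), verify that the underlying object is $\fib(\rho)$ using the identification $\fib(\rho)\simeq (F\otimes_B L_{B/K})^\vee\simeq L_{F/F\otimes_K B}^\vee[1]$, and conclude with \Cref{functor_extensions}. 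Your extra care about presentability, $U$ creating sifted colimits, and the uniqueness clause via \Cref{hypkan} is consistent with, and slightly more explicit than, the paper's argument.
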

\begin{proof}
 By \Cref{AffineKD}, the assignment $R \mapsto \mathfrak{D}(R) = (L_{F/R}^\vee[1] \rightarrow L_{F/K}^\vee[1]) \in \Alg_{\LieAlgd^\pi_{F/K}} $ 
induces a canonical equivalence 
\mbox{$(\SCR_{K//F}^{\cN}) ^{\op}  \xrightarrow{\simeq}\mathcal{D}_0$} with inverse
$\mathfrak{g} \mapsto C^\ast(\mathfrak{g})$. Here
 $\mathcal{D}_0$ consists of all $F/K$-partition Lie algebroids 
   $(\mathfrak{g} \xrightarrow{\rho} L_{F/K}^\vee [1])\in \Alg_{\LieAlgd^\pi_{F/K}} $ which satisfy \vspace{-2pt} \mbox{$\fib(\rho) \in \Mod_{F, \leq 0}^{\ft}$.}
Note that $\mathcal{D}_0$ contains the full subcategory   $\mathcal{C}_0 \subset \mathcal{D}_0 $ of free algebroids on objects $(V \xrightarrow{0} L_{F/K}[1])$ with \vspace{3pt} $V \in \Perf_{F,\leq 0}$, cf.\ \Cref{smallalgebroids}. 
Let us now consider the following composite functor $G_0$:\vspace{-5pt}
$$\mathcal{C}_0 \hookrightarrow  \mathcal{D}_0 \xrightarrow{\simeq}  (\SCR_{K//F}^{\cN})^{\mathrm{op}} \xrightarrow{B \mapsto  F \otimes_K B} \left((\SCR_{F//F}^{\cN})_{/F\otimes_K F}\right)^{\mathrm{op}} \xrightarrow{R \mapsto  ( L_{F/F\otimes_{_K} F}^\vee[1] \rightarrow L_{F/R}^\vee[1])}    (\mathrm{Alg}_{\liep})_{L_{F/K}^\vee/ },$$
where the final map is defined using the equivalence contained in \vspace{3pt} \cite[Corollary 5.46]{brantner2019deformation}.

 {
Write $\Forget: (\mathrm{Alg}_{\liep})_{L_{F/K}^\vee/ }\rightarrow \Mod_F,  (L_{F/K}^\vee \rightarrow M) \mapsto M$ for the forgetful functor.
Using 
\Cref{threemaps}, we  observe that the following diagram commutes up to homotopy:\vspace{10pt}
$$\xymatrix@+2pc{ 
  \mathcal{C}_0 \subset \mathcal{D}_0 \ar[rd]_{\id}  \ar[r]^{C^\ast}_{\simeq}& (\SCR_{K//F}^{\cN})^{\mathrm{op}} \ar[rrd]_[@!-14.65]*+<-4.3em>{  _{ \ \ \ \ \   \ \ \ \ \  \  \ \ \ \ \ \  \ \ \ \ \ \  \ \ \ \ \ B \mapsto \cot_{K//F}(B)^\vee \simeq \cot_F(F\otimes_K B)^\vee \simeq ( F\otimes_B L_{B/K})^\vee}\  \ \ }     \ar@<+1ex>[rd]_[@!-28.2]*+<-3.5em>{  _{ \  \ \ \ \ \  \  \ \ \ \ \ \  \ \ \ \ \ \  \ \ \ \ \ B \mapsto (L_{F/B}^\vee[1] \rightarrow L_{F/K}^\vee[1] )}\  \ \ }     \ar[r]^{  B \mapsto  F \otimes_K B \ \ \ \ \ }  \ar[d]_{\mathfrak{D}}^{\simeq }  &\left( (\SCR_{F//F}^{\cN})_{/F\otimes_K F} \right) ^{\mathrm{op}}\ar[r]^{ }\ar@<+1ex>[rd]^[@!-27]*+<-2em>{  _{ R \mapsto \cot_F(R)^{\vee} = L_{F/R}^\vee[1]}\  \ \ }  &  (\mathrm{Alg}_{\liep})_{L_{F/K}^\vee/ }\ar[d]^{\Forget} \\ 
 &  \mathcal{D}_0\ar[r]&   (\Mod_F)_{/L_{F/K}^\vee[1]} \ar[r]_{\fib}  & \Mod_{F}.}\vspace{10pt}$$
 Hence  $\Forget\circ G_0$ simply takes the fibre of the anchor map, and therefore extends to a sifted-colimit-preserving functor   $ {H} : \Alg_{\LieAlgd^\pi_{F/K}} \rightarrow \Mod_F$ sending $(\mathfrak{g} \xrightarrow{\rho} L_{F/K}^\vee[1])$ to $\fib(\rho)$. 
  \Cref{functor_extensions} then gives the desired sifted-colimit-preserving lift.  }
\end{proof}

To construct the  forgetful functor $U: \Alg_{\LieAlgd^\pi_{F/K}} \rightarrow (\Alg_{\Lie^\pi_{K}})_{/L_{F/K}^\vee[1]}$  {appearing}
 {in \Cref{maincons} $(3)$}, we will   consider 
 Kodaira--Spencer type formal moduli problems for $K$-schemes. 
To this end, we use the setup  of \cite[19.4]{lurie2016spectral} in a derived (rather than spectral) setting. 

\mbox{More precisely,  let}
$$\mathrm{Var}^+_\simeq:\SCR\to  {\mathcal{S}}$$  
denote the functor sending $B\in \SCR$ to the underlying Kan complex $\mathrm{Var}^+_\simeq(B)$ of $ \mathrm{Var}^+(B)$, the 
(essentially small) $\infty$-category of  maps of derived Deligne-Mumford stacks
$$Z\to \Spec(B)$$  which are proper, flat, and locally of almost finite presentation.  
 {Here a morphism 
$B_1 \rightarrow B_2$ is sent to the functor $\mathrm{Var}^+(B_1) \rightarrow \mathrm{Var}^+(B_2)$ 
obtained by pulling back a given $(Z\to \Spec(B_1)) \in \mathrm{Var}^+(B_1) $  along the morphism  $\Spec(B_2) \rightarrow \Spec(B_1)$.}

Given $B\in \SCR$ and a point $\eta \in \mathrm{Var}^+_\simeq(B)$ corresponding to  $Z\to \Spec(B)$, we may encode derived deformations of $Z$ by  the functor 
\begin{equation} \label{bigdef}
\Def^+_\eta : \SCR_{/B} \rightarrow  {\mathcal{S}}, \  A \mapsto  \mathrm{Var}^+_\simeq(A) \times_{\mathrm{Var}^+_\simeq(B)}\{\eta\}.\end{equation}
 {This functor is cohesive  (cf.\ \cite[Definition 3.4.1]{lurie2004derived})
 as this holds  for the functor 
 $\mathrm{Var}^+_\simeq$  by (the derived version of)    {\cite[Theorem 19.4.0.2]{lurie2016spectral}}. 
Note that the forgetful functor 
$\SCR_{/B} \rightarrow \SCR$   creates pullbacks. }

Any finite purely inseparable field extension $K\subset F$ gives a 
map of schemes  $\Spec(F)\to\Spec(K)$  {which is 
proper,
flat,
and locally of almost finite presentation, and 
\mbox{hence gives   a  point $\eta \in \mathrm{Var}^+_\simeq(K)$. }}
\ 

Restricting $\Def^+_\eta$ to  the $\infty$-category $\SCR_{K}^{\art}$ from \Cref{artinian}, we obtain a functor
$$\Def_{F/K}:\SCR_{K}^{\art} \rightarrow \mathcal{S}, \ \ A \mapsto  \Def_{F/K}(A):= \mathrm{Var}^+_\simeq(A) \times_{\mathrm{Var}^+_\simeq(K)}\{\eta\},  $$
which is  {the} formal moduli problem (over $K$)  encoding infinitesimal deformations of the $K$-scheme $\Spec(F)$, cf.\  \cite[Remark 19.4.4.1]{lurie2016spectral}.
The cited  remark also  proves the well-known fact that the tangent fibre of $\Def_{F/K}$ is given by $L_{F/K}^\vee [1]$. By \cite{brantner2019deformation}, Theorem 1.11], this tangent fibre is moreover equipped with the structure of a  partition Lie algebra  controlling the {formal moduli problem $\Def_{F/K}$,} as 
was already asserted in  \Cref{firstex} $(2)$.
With this in hand, we can show: 
\begin{proposition}[Forgetful functor]
	The forgetful\vspace{1pt}  functor $\Alg_{\LieAlgd^\pi_{F/K}}  \rightarrow (\Mod_K)_{/L_{F/K}^\vee[1]}$
sending an algebroid $(\mathfrak{g} \rightarrow L_{F/K}^\vee[1])$ to its underlying object in   $(\Mod_K)_{/L_{F/K}^\vee[1]}$ 
	lifts  {canonically} to a sifted-colimit-preserving functor 
	$$U: \Alg_{\LieAlgd^\pi_{F/K}} \rightarrow (\Alg_{\Lie^\pi_{K}})_{/L_{F/K}^\vee[1]},$$
	where $L_{F/K}^\vee[1]$  is the $K$-partition Lie algebra of \Cref{firstex} (2).
\end{proposition}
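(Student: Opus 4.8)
The plan is to run the extension machinery of \Cref{functor_extensions} in the same way as in the proof of \Cref{fibre_anchor}, but with a Kodaira--Spencer moduli problem \emph{over $K$} in place of the relative one over $F$. Put $\mathcal{D} = (\Alg_{\Lie^\pi_K})_{/L_{F/K}^\vee[1]}$ and $\mathcal{E} = (\Mod_K)_{/L_{F/K}^\vee[1]}$, with $U \colon \mathcal{D} \to \mathcal{E}$ the forgetful functor. First I would check that $U$ creates sifted colimits: the forgetful functor $\Alg_{\Lie^\pi_K} \to \Mod_K$ creates them because $\Lie^\pi_K$ preserves sifted colimits, cf.\ \Cref{pliecon}(1), and passing to the slice over a fixed object preserves this by \Cref{limcolimunder}, since sifted index categories are contractible. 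The sifted-colimit-preserving functor $K \colon \mathcal{C} := \Alg_{\LieAlgd^\pi_{F/K}} \to \mathcal{E}$ required as input to \Cref{functor_extensions} is the evident one: compose the monadic forgetful functor $\mathcal{C} \to (\Mod_F)_{/L_{F/K}^\vee[1]}$ of \Cref{maindef}(3) with restriction of scalars along $K \hookrightarrow F$; the first preserves sifted colimits by construction and the second preserves all colimits.

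The real content is the construction of a functor $G_0 \colon \mathcal{C}_0 \to \mathcal{D}$ with $U \circ G_0 \simeq K|_{\mathcal{C}_0}$, where $\mathcal{C}_0 \subset \mathcal{C}$ is as in \Cref{smallalgebroids}. As in the proof of \Cref{fibre_anchor}, by \Cref{AffineKD} the subcategory $\mathcal{C}_0$ embeds into $\mathcal{D}_0 \simeq (\SCR_{K//F}^{\cN})^{\mathrm{op}}$, on which the underlying arrow of the algebroid attached to $B$ is precisely $(L_{F/B}^\vee[1] \to L_{F/K}^\vee[1])$. So it suffices to upgrade this arrow --- functorially in $B \in \SCR_{K//F}^{\cN}$ --- to an arrow of $K$-partition Lie algebras. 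To this end I would attach to $B$, equivalently to the fixed map $\Spec(F) \to \Spec(B)$ over $K$, the relative Kodaira--Spencer functor $\Def_{F/B} \colon \SCR_K^{\art} \to \mathcal{S}$ obtained from $\Def_{F/K}$ by additionally recording, on top of a deformation of the $K$-scheme $\Spec(F)$ over $A$, a lift of the map $B \to F$; concretely $\Def_{F/B}(A)$ is the fibre over $\{B \to F\}$ of the resulting space of lifts. One checks $\Def_{F/B}(K) \simeq \ast$ and that $\Def_{F/B}$ is infinitesimally cohesive, being built as an iterated fibre product from $\Def_{F/K}$ (which is infinitesimally cohesive by the derived analogue of \cite[Theorem 19.4.0.2]{lurie2016spectral} recalled before the statement) and from corepresentable mapping-space functors. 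Hence $\Def_{F/B}$ is a formal moduli problem over $K$, the forgetful transformation $\Def_{F/B} \to \Def_{F/K}$ places it in $(\moduli_K)_{/\Def_{F/K}}$, and the whole assignment is contravariantly functorial in $B$. Through \Cref{plieeq} this yields $G_0 \colon \mathcal{C}_0 \hookrightarrow \mathcal{D}_0 \simeq (\SCR_{K//F}^{\cN})^{\mathrm{op}} \to (\moduli_K)_{/\Def_{F/K}} \simeq \mathcal{D}$. Finally, computing the tangent fibre of $\Def_{F/B}$ from the fundamental cofibre sequence of \Cref{lem:triangle} for $K \to B \to F$ identifies the underlying arrow of $G_0(\mathfrak{g} \xrightarrow{\rho} L_{F/K}^\vee[1])$ with $(L_{F/B}^\vee[1] \to L_{F/K}^\vee[1])$ and its structure map with the anchor $\rho$, so that $U \circ G_0 \simeq K|_{\mathcal{C}_0}$.

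With these inputs, \Cref{functor_extensions} produces a unique sifted-colimit-preserving functor $G \colon \mathcal{C} \to \mathcal{D}$ extending $G_0$ and lifting $K$; this $G$ is the desired functor $U$ of the Proposition. For the uniqueness clause, any sifted-colimit-preserving lift of the forgetful functor is left Kan extended from $\mathcal{C}_0$ by \Cref{hypkan}, hence determined by its restriction to $\mathcal{C}_0$, and one checks on the explicit square-zero generators of $\mathcal{C}_0$ that this restriction is forced to be $G_0$.

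I expect the main obstacle to be the middle paragraph: making the relative Kodaira--Spencer functor $\Def_{F/B}$ precise, verifying its infinitesimal cohesiveness and functoriality in $B$, and --- the delicate point --- computing its tangent fibre and checking that the induced structure map to $\Def_{F/K}$ recovers exactly the anchor $\rho$ rather than some twist of it. The restriction to complete local Noetherian $B$ (and to perfect coconnective $V$ in $\mathcal{C}_0$) is what keeps all cotangent complexes of finite type, cf.\ \Cref{finite}, so that everything stays within the $\infty$-categories where \Cref{plieeq} and the Koszul-duality equivalence \Cref{AffineKD} apply.
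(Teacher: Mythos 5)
Your proposal is correct and follows essentially the same route as the paper: the paper likewise constructs $G_0$ on $\mathcal{C}_0\simeq$ (free algebroids inside) $(\SCR_{K//F}^{\cN})^{\mathrm{op}}$ via a relative Kodaira--Spencer formal moduli problem $\Def_{F/B/K}$ over $K$ (defined there as $A\mapsto \Def^+_\eta(A\otimes_K B)$, which agrees with your description of deformations of $\Spec(F)$ over $A$ together with a lift of $B\to F$), verifies infinitesimal cohesiveness, computes the tangent fibre to be $L_{F/B}^\vee[1]$ with structure map the anchor, and then extends by \Cref{functor_extensions}. The points you flag as delicate (cohesiveness, the tangent-fibre computation, and matching the structure map with $\rho$) are exactly the steps the paper carries out in detail.
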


\begin{proof}[Proof]  
We will again use  the equivalence in \Cref{plieeq}. First, \mbox{we will construct  a functor} $$\Def_{F/\bullet/K}:(\SCR_{K//F}^{\cN}) ^{\op}\to (\mathrm{Moduli}_{K})_{/\Def_{F/K}}$$
which sends a given $B\in \SCR_{K//F}^{\cN}$  {with maximal ideal $\mathfrak{m}$}
to  {the formal moduli problem   (over $K$)  encoding}  {compatible families of deformations of the morphisms  $\Spec(F) \rightarrow \Spec(B/\mathfrak{m}^n)$ which 
hold the targets $\Spec(B/\mathfrak{m}^n)$ fixed.}

  {To formalise this, let us write $ \SCR_{K//F}^{\art} \subset \SCR_{K//F}^{\cN}$  for the full subcategory spanned by all $B$ with $\dim_K (\pi_\ast(B)) < \infty$.  Let us fix some $B \in  \SCR_{K//F}^{\art}$ and write 
$ \theta_{B} \in  \mathrm{Var}^+_\simeq(K)^{\Delta^1}$
for  the $K$-morphism  $\Spec(F)\rightarrow \Spec(B)$;  note that 
 $\Spec(F)$ and $\Spec(B)$ indeed  belong to $\mathrm{Var}^+_\simeq(K)$.}

 {
Write $\eta_{B }\in \mathrm{Var}^+_\simeq(K)$ for the $K$-morphism $\Spec(B) \rightarrow \Spec(K)$ and 
$$\triv_{B}(A) = \Spec(A) \times_{\Spec(K)} \Spec( B ) \in   \mathrm{Var}^+_\simeq(A) \times_{ \mathrm{Var}^+_\simeq(K)}\{\eta_B\}$$  for the trivial deformation of  $\Spec( B )$ to $A$.
We note that the object $\triv_{B }(A)$ is picked out  $\ast \simeq    \mathrm{Var}^+_\simeq(K) \times_{ \mathrm{Var}^+_\simeq(K)}\{\eta_B\}
\xrightarrow{ \mathrm{Var}^+_\simeq(K \rightarrow A)\times \id }   \mathrm{Var}^+_\simeq(A) \times_{ \mathrm{Var}^+_\simeq(K)}\{\eta_B\}$, so  it \vspace{3pt} depends \mbox{functorially on $A$.}} 

 {
We  now 
consider the functor 
 $$\Def_{F/B/K}:\SCR_K^{\art} \rightarrow \mathcal{S}$$  
$$ A \mapsto\fib_{\triv_B(A)} \left(  \mathrm{Var}^+_\simeq(A)^{\Delta^1} \times_{ \mathrm{Var}^+_\simeq(K)^{\Delta^1}}\{\theta_{B }\} \xrightarrow{\mathrm{ev}_1}  
  \mathrm{Var}^+_\simeq(A) \times_{ \mathrm{Var}^+_\simeq(K)}\{\eta_{B }\}
\right).$$}

 {
Informally,  $\Def_{F/B/K }$ sends $A \in \SCR_K^{\art}$ to the space of all pullback diagrams
$$\xymatrix{\Spec(F) \ar[r] \ar[d]^{\theta_{B}} & \widetilde{Z}\ar[d]\\ \Spec(B)\ar[r] &\Spec(A) \times_{\Spec(K)} \Spec(B)}.$$
}

 {We now claim that $\Def_{F/B/K} $ satisfies the axioms of a 
 formal moduli problem over $K$.  }

 {First, we note that $\Def_{F/B/K}(K) $ is evidently a contractible space.}
 Next,    fix  a  pullback square 
		$$\xymatrix{A_3\ar[r] \ar[d] & A_2\ar[d]\\ A_1\ar[r] &A_0 }$$ in $\SCR_K^{\art}$
for which the morphisms  $\pi_0(A_1)\to\pi_0(A_0)$ and $\pi_0(A_2)\to\pi_0(A_0)$ are surjective. 
 {As $\mathrm{Var}^+_\simeq(-)$ is 
cohesive   by (the derived version of)    {\cite[Theorem 19.4.0.2]{lurie2016spectral},  applying $\mathrm{Var}^+_\simeq(-)$
to the above square gives a pullback in spaces.  Since the functor $\Def_{F/B/K}(-)$ is built from $\mathrm{Var}^+_\simeq(-)$ by operations which preserve pullbacks, we see that the following square is a pullback   in $\mathcal{S}$:
$$\xymatrix{\Def_{F/B/K}(A_3)\ar[r] \ar[d] & \Def_{F/B/K}(A_2)\ar[d]\\ \Def_{F/B/K}(A_1)\ar[r] &\Def_{F/B/K}(A_0) .}$$}
Hence $\Def_{F/B/K} $ is a formal moduli problem.}

 {The assignment $B \mapsto \Def_{F/B/K}\in \moduli_{K} \subset \Fun(\SCR_K^{\art},\mathcal{S})$ is contravariantly functorial in $B$. Indeed,  this follows from the defining pullback diagram 
$$\xymatrix{\Def_{F/B/K}\ar[r] \ar[d] & \mathrm{Var}^+_\simeq(K) \times_{ \mathrm{Var}^+_\simeq(K)}\{\eta_B\}\ar[d]^{ \mathrm{Var}^+_\simeq(K \rightarrow A)\times \id }\\ 
\mathrm{Var}^+_\simeq(A)^{\Delta^1} \times_{ \mathrm{Var}^+_\simeq(K)^{\Delta^1}}\{\theta_B\}  \ar[r]^{\mathrm{ev}_1} &\mathrm{Var}^+_\simeq(A) \times_{ \mathrm{Var}^+_\simeq(K)}\{\eta_B\} .}$$
since $\theta_B$ and $\eta_B = \mathrm{ev}_1(\theta_B)$ depend contravariantly  functorially on $B$.} 
\ \\ 
 {Hence we obtain a functor $\Def_{F/\bullet/K}: \SCR_{K//F}^{\art,\op} \rightarrow \moduli_K$}

 {We now consider the $\infty$-category $$\Pro(\SCR_{K//F}^{\art})^{\op}    \subset \Fun( \SCR_{K//F}^{\art}, \mathcal{S}) $$ of finite-limit-preserving functors $\SCR_{K//F}^{\art}\rightarrow \mathcal{S}$.  
Given $B \in \SCR_{K//F}^{\cN}$,  the functor $$\Map_{\SCR_{K//F}}(B,-): \SCR_{K//F}^{\art}\rightarrow \mathcal{S}$$
belongs to $\Pro(\SCR_{K//F}^{\art})^{\op}$, and this  assignment gives a functor $Y:  (\SCR_{K//F}^{\cN})^{\op} \rightarrow  \Pro(\SCR_{K//F}^{\art})^{\op} $.}

 { 
Since  $\Pro(\SCR_{K//F}^{\art})^{\op}  =  \Ind( \SCR_{K//F}^{\art,\op}) $,  we can use the universal property of the $\Ind$-construction (cf.\ \cite[Proposition 5.3.5.10]{lurie2009higher}) to extend the functor $\Def_{F/\bullet/K}: \SCR_{K//F}^{\art,\op} \rightarrow \moduli_K$
  in a filtered-colimit-preserving way to a functor  $\Pro(\SCR_{K//F}^{\art})^{\op}  \rightarrow \moduli_K$. Precomposing with the Yoneda functor $Y:  (\SCR_{K//F}^{\cN})^{\op}  \rightarrow  \Pro(\SCR_{K//F}^{\art})^{\op}$ gives an extension 
$ (\SCR_{K//F}^{\cN})^{\op}   \rightarrow \moduli_K  $
of $\Def_{F/\bullet/K} $  from  $\SCR_{K//F}^{\art,\op}$ to   $(\SCR_{K//F}^{\cN})^{\op}$.  As this functor  sends  $K$ to $\Def_{F/K}$, we obtain a lift
$$ \Def_{F/\bullet/K} : (\SCR_{K//F}^{\cN})^{\op} \rightarrow (\moduli_{K})_{/\Def_{F/K}}.$$Note that we have slightly abused notation by also using the name  $ \Def_{F/\bullet/K}$ for this \vspace{5pt}  new functor.}

  {We will now show that the tangent fibre of the formal moduli problem $ X:=\Def_{F/B/K}$ is  $L_{F/B}^\vee[1]$.}
Tangent fibres of   formal moduli problems   deforming   morphisms under constraints
are well-known to experts.
We  outline the main steps of the computation   for the reader's convenience, and refer to \cite[Proposition 6.4.19]{nuiten2018lie} or \vspace{1pt}  \cite[Proposition 3.11]{porta2020non} for further details.

First,  let us assume that $B \in \SCR_{K//F}^{\art}$ is Artinian.  Recall from \eqref{functorialtangent} that the tangent fibre $T_X\in \Mod_K$ is characterised by a natural equivalence 
$$  \   \   \   \ \Map_K(V^\ast, T_X) \simeq  X( K\oplus V)\  ,    \   \ V\in \Perf_{K, \geq 0}$$
 where $(-)^\ast$ denotes $K$-linear duality. \vspace{3pt}In what follows below,  will write  $(-)^\vee$ for \mbox{$F$-linear duality.}
By \Cref{finite} and the fundamental cofibre sequence,  
$L_{F/B}$ is of finite type.  We therefore obtain a chain of natural \vspace{3pt} equivalences  for any
 $V\in \Perf_{K, \geq 0}$:
 {\begin{align*}
\Map_K( V^\ast, L_{F/B}^\vee[1]) & \simeq \Map_F(( F\otimes_K V)^{\vee} , L_{F/B}^\vee[1]))& \\
 & \simeq \Omega^\infty \left( ( F \otimes_K V) \otimes_F L_{F/B}^\vee[1]\right)& \\
	&\simeq \Map_F(L_{F/B}, F\otimes_K V[1] )&
	\end{align*}}
 To show that $T_{X} \simeq L_{F/B}^\vee[1]$, it therefore suffices to construct  a 
 \mbox{natural equivalence of spaces}
$$\Map_F(L_{F/B}, F\otimes_K V[1] ) \simeq \Def_{F/B/K}(K\oplus V) \  ,    \   \ V\in \Perf_{K, \geq 0}.\vspace{1pt} $$
 {Indeed,  using the universal property of the cotangent complex and the fact that $B\rightarrow F$ is a map over $K$,   we can identify the space of 
 $F$-linear maps \mbox{$L_{F/B} \rightarrow F \otimes_K V[1]$} with the fibre of the map
$$ \Map_{\SCR_K} \left(F, F \oplus (F \otimes_K V[1]) \right) \rightarrow \Map_{\SCR_K}  \left(B, F \oplus (F \otimes_K V[1])\right) \times_{\Map_{\SCR_K}  \left(B, F\right) } \Map_{\SCR_K}  \left(F, F\right) $$
over the point $$(B \xrightarrow{(\epsilon,0)} F \oplus (F \otimes_K V[1]),  \  \id_F  );$$ here $\epsilon: B \rightarrow F$ is the \vspace{3pt} structure \mbox{morphism of $B$.}}

  {Equivalently,   the space of 
 $F$-linear maps \mbox{$L_{F/B} \rightarrow F \otimes_K V[1]$}  can be identified with the  
space of maps    \mbox{$ \alpha:\Spec(F\oplus(F\otimes_K V[1]))\to\Spec(F)$ } rendering commutative the following diagram: }

$$\xymatrix{
\Spec(F) \ar[r] \ar[d] & \Spec(F \oplus (F \otimes_K V[1]) )  \ar[d]  \ar@{-->}[r]^{ \ \ \ \ \ \ \ \ \ \  \alpha}&\Spec(F)\ar[d]\\
\Spec(B) \ar[r] \ar[d] & \Spec(B \oplus (B \otimes_K V[1]) )  \ar[r] \ar[d] &  \Spec(B)\ar[d]\\ 
\Spec(K)  \ar[r] & \Spec(K \oplus V[1] )  \ar[r]   &  \Spec(K).}\vspace{-2pt}$$\\
Here horizontal composites are identity maps, and the middle and lower horizontal maps on the right correspond  to 
$(\id_B, 0)$
 and 
$(\id_K, 0)$, respectively. \vspace{1pt}
 
 {The top right square is a homotopy pullback,  and we therefore obtain two equivalences 
$$  \Spec(B \oplus (B \otimes_K V[1]) ) \hspace{-4pt}  \mytimes{ \Spec(B)}  \hspace{-4pt}\Spec(F)   \xleftarrow{{(\id_F,0)}}  \Spec(F \oplus (F \otimes_K V[1]) )  \xrightarrow{\alpha}  \Spec(B \oplus (B \otimes_K V[1]) ) \hspace{-4pt}  \mytimes{ \Spec(B)} \hspace{-4pt} \Spec(F) $$}
from which we obtain  a point in the space
$X(K) \times_{X(K\oplus V[1])} X(K) \simeq \Omega(X(K \oplus V[1]))\simeq X(K \oplus V),  $
 of   {auto}morphisms of the trivial deformation.\\
 {The resulting map 
$ \Map_F(L_{F/B}, F\otimes_K V[1] ) \rightarrow \Omega(X(K \oplus V[1])) \simeq X(K \oplus V[1])$ is an equivalence,   
and  we deduce that  $T_{\Def_{F/B/K}} \simeq L_{F/B}^\vee[1]$. A diagram chase now
shows that the map}  \vspace{-1pt}
$$\Def_{F/B/K} \rightarrow \Def_{F/K/K}\simeq \Def_{F/K}\vspace{2pt}$$ induces the natural morphism $L_{F/B}^\vee[1] \rightarrow L_{F/K}^\vee[1]$ on tangent fibres for all $B \in \SCR_{K//F}^{\art}$ Artinian.
But this in fact holds true for all $B\in \SCR_{K//F}^{\cN}$, because 
 \Cref{AffineKD} implies  that 
 the functor $(\SCR_{K//F}^{\cN})^{\op}\rightarrow (\Mod_{F, \leq 0}^{\ft})_{/ L_{F/K}^\vee [1]}$, $B \mapsto (L_{F/B}^\vee [1] \rightarrow L_{F/K}^\vee [1])$ preserves filtered colimits.\vspace{5pt}

Returning to the  statement of the theorem,  we will now construct the forgetful functor  \vspace{-3pt}
$$U: \Alg_{\LieAlgd^\pi_{F/K}} \rightarrow (\Alg_{\Lie^\pi_{K}})_{/L_{F/K}^\vee[1]}. \vspace{-3pt}$$
As in\vspace{-2pt}  \Cref{fibre_anchor}, let 
$\mathcal{D}_0$ contain  all 
   $(\mathfrak{g} \xrightarrow{\rho} L_{F/K}^\vee [1])\in \Alg_{\LieAlgd^\pi_{F/K}} $ with \mbox{$\fib(\rho) \in \Mod_{F, \leq 0}^{\ft}$.} Let $\mathcal{C}_0 \subset \mathcal{D}_0$ consist of all\vspace{2pt} free algebroids on objects $(V \xrightarrow{0} L_{F/K}[1])$ with $V \in \Perf_{F,\leq 0}$.
Recall that if 
$(\mathfrak{g} \xrightarrow{\rho} L_{F/K}^\vee [1])\in \mathcal{D}_0$  corresponds to $B \in \SCR_{K//F}^{\cN} $
under the equivalence in \Cref{AffineKD}, then 
 the underlying object of $(\mathfrak{g} \xrightarrow{\rho} L_{F/K}^\vee [1])$ is   $(L_{F/B}^\vee [1] \xrightarrow{\rho} L_{F/K}^\vee [1])$. 
We  form the\vspace{-2pt} \mbox{composite functor $G_0$:}
		$$ \mathcal{C}_0 \subset \mathcal{D}_0\simeq(\SCR_{K//F}^{\cN})^\mathrm{op} \xrightarrow{\Def_{F/\bullet/K}} (\mathrm{Moduli}_{K})_{/\Def_{F/K}}\xrightarrow{\mathrm{[BM19, 1.11]}}(\Alg_{\Lie_{K}})_{/L_{F/K}^\vee[1]}
		 $$
On the other hand, the\vspace{-2pt} \vspace{-2pt}  composite $$ \mathcal{D}_0\simeq(\SCR_{K//F}^{\cN})^\mathrm{op} \xrightarrow{} (\Alg_{\Lie_{K}})_{/L_{F/K}^\vee[1]}\xrightarrow{\Forget} (\Mod_F)_{/L_{F/K}^\vee[1]}\vspace{-2pt} 
$$ sends $B\in \mathcal{D}_0 $ to the object  $(L_{F/B}^\vee [1] \xrightarrow{\rho} L_{F/K}^\vee [1])\in (\Mod_F)_{/L_{F/K}^\vee[1]}$, by our above computation of the map of tangent complexes $T_{\Def_{F/B/K}} \rightarrow T_{\Def_{F/K}}$.
Hence $\Forget \circ G_0$ naturally extends to 
a sifted-colimit-preserving functor   $ {H} : \Alg_{\LieAlgd^\pi_{F/K}} \rightarrow (\Mod_K)_{/L_{F/K}^\vee[1]}$
and   \Cref{functor_extensions} provides  the desired sifted-colimit-preserving extension $G$ of $G_0$. 
\end{proof}
\newpage

 \newpage
\section{The Fundamental Theorem}\label{sec:fundamental_theorem}

To establish a  Galois correspondence for finite  purely inseparable field extensions $F/K$, we will use partition Lie algebroids (cf.\ \Cref{palgebroiddef}) as a natural substitute for  the restricted Lie algebroids appearing in 
Jacobson's exponent one\vspace{3pt} correspondence in \Cref{Jacob}.
 
In \Cref{AffineKD}, we have seen  that the natural\vspace{-2pt} tangent fibre functor
$$\mathfrak{D}: \SCR_{K//F} ^{\op} \rightarrow \Alg_{\LieAlgd^\pi_{F/K}} $$
lifting the assignment $ B \mapsto (L_{F/B}^\vee [1] \rightarrow L_{F/K}^\vee [1])$ 
becomes fully faithful after restriction to $(\SCR_{K//F}^{\cN}) ^{\op}$, the subcategory of complete local Noetherian objects\vspace{3pt} in $(\SCR_{K//F}) ^{\op}$.

As intermediate fields $K\subset E \subset F$ are in particular objects of  $\SCR_{K//F}^{\cN}$, we obtain a description of intermediate fields in terms of partition Lie algebroids.
To  complete the proof of  \Cref{fundamental_theorem}, it therefore suffices to characterise the essential image of $\mathfrak{D}|_{\fields_{K//F}}$, the 
 restriction of $\mathfrak{D}$ to the full subcategory
$\fields_{K//F} \subset  \SCR_{K//F}$  spanned by  intermediate fields. Note that as homomorphisms between fields are injective, 
$\fields_{K//F}$   in fact forms a poset.

\subsection{The homotopical algebra of field extensions}
Before characterising this essential image, we will review several  elementary facts concerning the   \vspace{3pt}  cotangent \mbox{complex of field extensions $F/K$.}

We start by recalling a \vspace{-3pt}  well-known and fundamental computational tool, \mbox{cf.\ e.g.\ \cite[\href{https://stacks.math.columbia.edu/tag/08SJ}{Tag 08SJ}]{stacks-project}:}
\begin{lemma}\label{regsurj}
If $f:A \rightarrow B$ is a  surjective map of commutative rings whose kernel $I$ is generated by a regular sequence, then the relative cotangent complex is given by $$L_{B/A} \simeq I/I^2[1].$$
\end{lemma}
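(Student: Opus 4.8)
The plan is to reduce the statement to the single standard computation of the cotangent complex of a retraction, and to use regularity only through a Tor--vanishing. Since $f$ is surjective we have $\pi_0(L_{B/A}) = \Omega^1_{B/A} = 0$, so $L_{B/A}$ is $1$-connective; moreover $\pi_1(L_{B/A}) \cong I/I^2$ for any surjection. Hence it suffices to prove $\pi_i(L_{B/A}) = 0$ for $i \geq 2$: then $L_{B/A}$ has homology concentrated in a single degree, so it is canonically equivalent to $\pi_1(L_{B/A})[1] = (I/I^2)[1]$, and $I/I^2$ is free over $B$ of rank $n$, the length of the regular sequence. (One can also bypass the general fact $\pi_1(L_{B/A})\cong I/I^2$: the computation below produces an abstract equivalence $L_{B/A}\simeq B^{\oplus n}[1]$, and since $I/I^2\cong B^{\oplus n}$ for a regular sequence this already yields $L_{B/A}\simeq (I/I^2)[1]$; identifying it with the map $[i]\mapsto di\otimes 1$ is then a routine check against the universal derivation.)

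Write $I = (t_1,\dots,t_n)$ and $P := A[x_1,\dots,x_n]$, and consider the two $A$-algebra maps $\phi,\psi : P \to A$ given by $\phi(x_i) = 0$ and $\psi(x_i) = t_i$. I claim the square
$$\xymatrix{ P \ar[r]^{x_i\mapsto t_i} \ar[d]_{x_i\mapsto 0} & A \ar[d] \\ A \ar[r] & B }$$
is a pushout of simplicial commutative rings, i.e.\ the derived relative tensor product $A \otimes_P A$, with $P$ acting through $\phi$ on one side and $\psi$ on the other, is discrete and equals $B$. Indeed $A = P/(x_1 - t_1,\dots,x_n - t_n)$, and $(x_i - t_i)_i$ is a regular sequence in $P$, so the Koszul complex on $(x_1-t_1,\dots,x_n-t_n)$ resolves $A$ over $P$; applying $(-)\otimes_P A$ along $\phi$ turns it into the Koszul complex on $(t_1,\dots,t_n)$ over $A$, which resolves $B$ \emph{precisely because} $(t_1,\dots,t_n)$ is a regular sequence. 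Hence $\operatorname{Tor}^P_{>0}(A,A) = 0$ and $A\otimes_P A \simeq B$. This Tor--vanishing is the one and only place regularity enters, and I expect it to be the main point to get right, together with keeping track of which leg of the square is $\phi$ and which is $\psi$.

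Finally I would apply cobase change for the cotangent complex to this pushout square (standard; see \cite[Section 7.3.3]{lurie2014higher} or \cite[Chapter 25]{lurie2016spectral}) to get $L_{B/A} \simeq B \otimes_A L_{A/P}$, where $L_{A/P}$ is formed along $\psi$. Since $\psi$ is a retraction of the inclusion $A \hookrightarrow P$ (the composite $A \hookrightarrow P \xrightarrow{\psi} A$ is $\id_A$), the transitivity cofibre sequence of \Cref{lem:triangle} applied to $A \to P \to A$ reads $A \otimes_P L_{P/A} \to L_{A/A} \to L_{A/P}$, i.e.\ $A^{\oplus n} \to 0 \to L_{A/P}$, using $L_{P/A} = \Omega^1_{P/A} = P^{\oplus n}$ on the basis $dx_1,\dots,dx_n$. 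Therefore $L_{A/P} \simeq A^{\oplus n}[1]$, hence $L_{B/A} \simeq B^{\oplus n}[1]$, which has homology concentrated in degree $1$; combined with the first paragraph this gives $L_{B/A} \simeq (I/I^2)[1]$. (Alternatively one could induct on $n$ using only \Cref{lem:triangle}: with $B' = A/(t_1,\dots,t_{n-1})$ the image of $t_n$ is a nonzerodivisor in $B'$ and $B = B'/(t_n)$, so the transitivity triangle together with freeness of the conormal module of a regular sequence reduces to $n=1$ — but the case $n=1$ still needs the pushout computation above, so little is gained.)
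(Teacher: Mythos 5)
Your proof is correct. The paper does not actually prove this lemma --- it is quoted as a known fact with a pointer to [Stacks, Tag 08SJ] --- and your argument is a complete and accurate write-up of the standard proof: the identification $B \simeq A\otimes_P A$ as a Tor-independent (hence derived) pushout over $P=A[x_1,\dots,x_n]$, verified via the Koszul resolution of $A$ over $P$ on $(x_i-t_i)$ and the regularity of $(t_1,\dots,t_n)$; then base change and the transitivity triangle of \Cref{lem:triangle} for the retraction $A\to P\to A$ to get $L_{A/P}\simeq A^{\oplus n}[1]$ and hence $L_{B/A}\simeq B^{\oplus n}[1]$, pinned down as $(I/I^2)[1]$ by the general computation of $\pi_1$ of the cotangent complex of a surjection. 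All the points you flag as the ones to get right (the Tor-vanishing, which leg of the square carries $\phi$ versus $\psi$) are indeed the crux, and you handle them correctly.
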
 

With this lemma, we can  easily compute  the  relative cotangent complex for  finite field extensions, as these are complete intersections. We obtain the following    classical result:

\begin{proposition}\label{prop:cotangent_calculation}
	Let $F/K$ be a finite field extension.  Pick $x_1,\ldots, x_n \in F$ such that the map $\phi:K[X_1, \ldots, X_n]\xrightarrow{X_i\mapsto x_i} F$ is surjective, and write $I$ for  the kernel of $\phi$.  There is an equivalence 
	$$ L_{F/K}\simeq \left( \ldots \rightarrow 0 \rightarrow  I/I^2 \rightarrow \Omega^1_{K[X_1,\ldots, X_n]/K} \otimes_{K[X_1,\ldots, X_n]}  F\right), $$
	where the boundary map sends a class $[i] \in I/I^2$ to the element $di \otimes 1$.
\end{proposition}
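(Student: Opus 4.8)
The plan is to reduce the statement to \Cref{regsurj} by checking that the ideal $I$ can be generated by a regular sequence whenever $F/K$ is a finite field extension. Concretely, I would choose the generators $x_1, \ldots, x_n$ of $F$ over $K$ so that the presentation $\phi: K[X_1, \ldots, X_n] \to F$ is particularly economical: working one generator at a time, let $F_j = K(x_1, \ldots, x_j)$, so that $F_j = F_{j-1}[X_j]/(f_j)$ where $f_j$ is the minimal polynomial of $x_j$ over $F_{j-1}$ (or $f_j$ is chosen to generate the relevant ideal when $x_j$ is already in $F_{j-1}$, though in that case one simply omits $x_j$). Pulling these minimal polynomials back to elements $\widetilde{f}_j \in K[X_1, \ldots, X_j] \subseteq K[X_1, \ldots, X_n]$, I claim the sequence $\widetilde{f}_1, \ldots, \widetilde{f}_n$ generates $I$ and is regular.

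The key steps, in order, are: (i) verify that $\widetilde{f}_1, \ldots, \widetilde{f}_n$ generate $I$ — this follows by induction on $n$, since $K[X_1, \ldots, X_n]/(\widetilde f_1, \ldots, \widetilde f_n) \cong (K[X_1, \ldots, X_{n-1}]/(\widetilde f_1, \ldots, \widetilde f_{n-1}))[X_n]/(f_n) \cong F_{n-1}[X_n]/(f_n) = F_n = F$, and the composite surjection agrees with $\phi$; (ii) verify that the sequence is regular — again by induction, $\widetilde{f}_n$ is a nonzerodivisor modulo $(\widetilde f_1, \ldots, \widetilde f_{n-1})$ because that quotient is the domain $F_{n-1}[X_n]$ and $\widetilde f_n$ maps to the nonzero polynomial $f_n$; (iii) apply \Cref{regsurj} to conclude $L_{F/K} \simeq I/I^2[1]$, i.e. $L_{F/K}$ is concentrated in degrees $0$ and $1$; (iv) identify this with the two-term complex in the statement by recalling that for the polynomial algebra $P = K[X_1, \ldots, X_n]$ there is the standard conormal presentation $L_{F/P}[-1] \simeq I/I^2 \to L_{P/K} \otimes_P F = \Omega^1_{P/K} \otimes_P F$ coming from the fundamental cofibre sequence of \Cref{lem:triangle} applied to $K \to P \to F$, together with the fact that $L_{P/K} \simeq \Omega^1_{P/K}$ sits in degree $0$ (as $P$ is smooth, or directly since $P$ is polynomial); (v) record that the boundary map is $[i] \mapsto di \otimes 1$, which is precisely the map $I/I^2 \to \Omega^1_{P/K} \otimes_P F$ in the conormal sequence, by construction of the connecting map in the transitivity triangle.

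The main obstacle is step (i)/(ii): being careful about the reduction of generators and making the inductive presentation honest, in particular handling the degenerate situation where some chosen $x_j$ already lies in $F_{j-1}$ (one simply discards it and re-indexes, so that after pruning we may assume each $x_j \notin F_{j-1}$, at the cost of possibly lowering $n$; since the statement quantifies over \emph{a} choice of $x_i$, one should note that any presentation's cotangent complex is computed this way, and the two-term description is independent of the choice). A clean way to finish is to observe that \emph{any} surjection $\phi: K[X_1, \ldots, X_n] \to F$ from a polynomial ring onto a field has the property that $I$ is generated by a regular sequence — because $K[X_1, \ldots, X_n]$ is Cohen--Macaulay and $K[X_1, \ldots, X_n]/I = F$ has dimension $0$, the ideal $I$ has height $n$ and, being an ideal in a regular (hence Cohen--Macaulay) local-ish ring that is a complete intersection set-theoretically, one checks directly that $I$ is a complete intersection since $F$ is a field: localize at the maximal ideal $I$, so $K[X_1, \ldots, X_n]_I$ is a regular local ring of dimension $n$ with $I_I$ primary to its maximal ideal and the quotient equal to $F$, hence $I_I$ is the maximal ideal, generated by a regular system of parameters; a Nakayama/graded argument then globalizes this. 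Either route suffices; I would present the explicit inductive construction as the main argument since it also makes the boundary map transparent, and invoke the conormal sequence of \Cref{lem:triangle} to pin down the final form.
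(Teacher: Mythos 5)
Your proposal is correct and follows essentially the same route as the paper: build the kernel $I$ from lifted monic minimal polynomials along the tower $K \subset K(x_1) \subset \cdots \subset F$, observe these form a regular sequence, apply \Cref{regsurj} to get $L_{F/P}\simeq I/I^2[1]$ for $P=K[X_1,\ldots,X_n]$, and feed this into the transitivity triangle for $K\to P\to F$ to obtain the two-term complex with the conormal boundary map. Two small remarks: in step (iii) you mean $L_{F/P}\simeq I/I^2[1]$, not $L_{F/K}$ (step (iv) then correctly produces $L_{F/K}$ as the cofibre); and the pruning in the degenerate case $x_j\in F_{j-1}$ is unnecessary (and would change the given presentation), since the minimal polynomial there is linear and monic, so the induction runs verbatim.
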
 
\begin{proof}
	For $k\geq 0$, we write $K[x_1,\ldots,x_k]\subset F$ for the subring generated by $x_1,\ldots, x_k$. Since $K[x_1,\ldots,x_k]$ is a finite domain over $K$, it is in fact a field.
	We can inductively pick polynomials   $$P_1,\ldots, P_n \in K[X_1,\ldots, X_n] $$
	such that $P_i  $ belongs to $ R=K[X_1,\ldots, X_i]$,  is monic in $X_i$ (over $K[X_1,\ldots, X_{i-1}]$), and maps to the minimal polynomial of $x_i$ over $K[x_1,\ldots,x_{i-1}]$.
As the polynomials are monic,  $P_1,\ldots, P_n$ form a regular sequence generating  $I = \ker(K[X_1,\ldots, X_n] \xrightarrow{X_i \mapsto x_i} F)$.
	Combining \Cref{regsurj} \mbox{with   the cofibre sequence} $$F\otimes_R L_{R/K}\to L_{F/K}\to L_{F/R}, $$
	we see that $L_{F/K}$ is the cofibre of a map $I/I^2\to  \Omega^1_{K[X_1,...,X_n]/K}  \otimes_{K[X_1,...,X_n]} F$.  
The boundary map can be identified with  $[i]\mapsto di\otimes 1$ using the classical conormal sequence, see for example \cite[Proposition 16.3]{eisenbud}. 
\end{proof}

Hence for any finite field extension $F/K$, the homology of $L_{F/K}$ is concentrated in two degrees. 

The difference of the
nonzero homology groups measures how far $F/K$ is from being algebraic.
The following result of Cartier, which appears in \cite[Th\'{e}or\`{e}me 0.21.7.1]{EGA} or    \cite[\href{https://stacks.math.columbia.edu/tag/07E1}{Tag 07E1}]{stacks-project}, 
will play an important role in our main argument:
 
\begin{lemma}[Cartier's equality] \label{Cartier}
Let $F/K$ be a finitely generated field extension. Then the module of K\"{a}hler differentials 
$\Omega^1_{F/K} = \pi_0(L_{F/K})$ and the module of imperfection $\Upsilon_{F/K} = \pi_1(L_{F/K})$ are finite-dimensional, and satisfy 
$$\dim_F( \Omega^1_{F/K}) - \dim_F(\Upsilon_{F/K} ) = \trdeg_K(F), $$
where the right hand side denotes the transcendence degree of $F$ over $K$.
\end{lemma}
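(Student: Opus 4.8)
The plan is to reduce to the two extreme cases -- a purely transcendental extension and a finite one -- by factoring $F/K$ through a transcendence basis, and then to read off the equality from the Euler characteristic of the transitivity cofibre sequence of \Cref{lem:triangle}.

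First, pick a transcendence basis $t_1,\ldots,t_r\in F$ of $F$ over $K$, where $r=\trdeg_K(F)$, and set $L=K(t_1,\ldots,t_r)$, so that $K\subset L\subset F$ with $L/K$ purely transcendental and $F/L$ finite. Applying \Cref{lem:triangle} to $K\to L\to F$ gives a cofibre sequence
$$F\otimes_L L_{L/K}\longrightarrow L_{F/K}\longrightarrow L_{F/L}.$$
Since $L$ is a localisation of the polynomial ring $K[t_1,\ldots,t_r]$, which is smooth over $K$ of relative dimension $r$, we have $L_{L/K}\simeq\Omega^1_{L/K}\cong L^{\oplus r}$ concentrated in degree $0$, whence $F\otimes_L L_{L/K}\simeq F^{\oplus r}$, again concentrated in degree $0$. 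By \Cref{prop:cotangent_calculation}, $L_{F/L}$ is concentrated in degrees $0$ and $1$ with finite-dimensional homology groups; the long exact sequence of the triangle then shows that $L_{F/K}$ is likewise concentrated in degrees $0,1$ and that $\Omega^1_{F/K}$ and $\Upsilon_{F/K}$ are finite-dimensional over $F$, which proves the first assertion of the lemma.

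For any $F$-module chain complex $M$ concentrated in degrees $0,1$ with finite-dimensional homology, set $\chi(M):=\dim_F\pi_0(M)-\dim_F\pi_1(M)$. The long exact sequence of the cofibre sequence above is a six-term exact sequence of finite-dimensional $F$-vector spaces, and its vanishing alternating sum yields the additivity relation
$$\chi(L_{F/K})=\chi(F\otimes_L L_{L/K})+\chi(L_{F/L})=r+\chi(L_{F/L}).$$
It remains to check $\chi(L_{F/L})=0$. Writing $F=L[X_1,\ldots,X_n]/I$ as in \Cref{prop:cotangent_calculation}, the proof of that proposition produces a regular sequence $P_1,\ldots,P_n$ generating $I$ -- one monic polynomial per variable -- so that $I/I^2$ is free of rank $n$ over $F$, while $\Omega^1_{L[X_1,\ldots,X_n]/L}\otimes_{L[X_1,\ldots,X_n]}F$ is also free of rank $n$ over $F$; hence $\chi(L_{F/L})=n-n=0$ irrespective of the boundary map. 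Combining, $\dim_F(\Omega^1_{F/K})-\dim_F(\Upsilon_{F/K})=\chi(L_{F/K})=r=\trdeg_K(F)$.

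Once the transcendence basis is fixed, the argument is essentially bookkeeping; the one point genuinely needing care is the simultaneous finite-dimensionality of all the homology groups involved, since without it the Euler characteristic is undefined and additivity along the triangle fails -- and this is exactly what \Cref{prop:cotangent_calculation} (applied to the finite extension $F/L$), together with the long exact sequence, supplies.
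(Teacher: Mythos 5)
Your proof is correct. Note that the paper does not actually prove this lemma: it is quoted as a classical result of Cartier with references to [EGA, Th\'{e}or\`{e}me 0.21.7.1] and [Stacks, Tag 07E1], so there is no internal proof to compare against. Your argument is a clean, self-contained derivation using only tools already present in the paper: factoring $F/K$ through the purely transcendental subextension $L=K(t_1,\dots,t_r)$, using that $L_{L/K}\simeq \Omega^1_{L/K}\cong L^{\oplus r}$ is concentrated in degree $0$ (smoothness of the polynomial ring plus vanishing of the cotangent complex of a localisation), invoking \Cref{prop:cotangent_calculation} to see that $L_{F/L}$ is a two-term complex of free $F$-modules of equal rank $n$ (so $\chi(L_{F/L})=0$), and then reading off the equality from the Euler characteristic of the transitivity triangle of \Cref{lem:triangle}. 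This is essentially the cotangent-complex reformulation of the classical proof via the Jacobi--Zariski sequence for $\Omega^1$ and $\Upsilon$; phrasing it through $L_{F/K}$ makes the finite-dimensionality and the additivity of $\chi$ transparent, and it fits naturally with how the lemma is used later in the paper (where Euler-characteristic arguments on segments of these exact sequences recur). The only points requiring care --- that $F/L$ is finite, that all six terms of the long exact sequence are finite-dimensional so the alternating sum is defined, and that the Euler characteristic of the two-term complex computing $L_{F/L}$ can be read off from its terms rather than its homology --- are all handled correctly.
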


 {Note that in view of \Cref{prop:cotangent_calculation}, for a finite extension, the integer which appears in Cartier's equality is actually given by the Euler characteristic $\chi(L_{F/K})$.}

\begin{remark}
The \textit{module of imperfection} was originally defined as \mbox{$\Upsilon_{F/K} = \ker(\Omega^1_{K} \otimes_K F \rightarrow \Omega^1_F)$,}
 cf.\ e.g.\ \cite[Th\'{e}or\`{e}me 0.21.7.1]{EGA}, where $\Omega^1_{R} = \Omega^1_{R/\ZZ} $ is the module of absolute K\"{a}hler differentials. 
It is well-known that $\Upsilon_{F/K} \cong \pi_1(L_{F/K})$  for fields  (cf.\   \cite[Lemma 1.1.2]{saito2020graded} for a modern reference), and that $\Upsilon_{F/K}$ vanishes precisely if $F/K$ is separable, cf.\   \cite[Proposition 0.20.6.3]{EGA}.
\end{remark}

\subsection{The essential image theorem}
Finally, we come to the main result of this section,
in which we characterise   the essential image of the functor from intermediate field extensions $K \subset E \subset F$ to $F/K$-partition Lie algebroids.  This will allow us to complete  the proof of the fundamental theorem of purely inseparable Galois theory,   \Cref{fundamental_theorem}.

\begin{theorem}[Essential Image Theorem]\label{essentialimagetheorem}
	Let $F/K$ be a finite purely inseparable field extension.  An $F/K$-partition Lie algebroid
	$(\mathfrak{g}\xrightarrow{\rho}L_{F/K}^\vee[1])\in \Alg_{\LieAlgd^\pi_{F/K}}$ is equivalent to one of the form $$\mathfrak{D}(E) = (L_{F/E}^\vee[1]\xrightarrow{\rho}L_{F/K}^\vee[1])\vspace{3pt}$$  for some intermediate field $K\subset E \subset F$ if and only if the following conditions are satisfied:
	
	\begin{enumerate}
		\item Injectivity: the anchor map $\rho$ induces an injection $\pi_1(\mathfrak{g}) \hookrightarrow \pi_1(L_{F/K}^\vee[1]) \cong \Der_K(F)$.
		\item  Vanishing: $\pi_k(\mathfrak{g})=0$ for $k\neq 0,1$.
		\item Balance: $\dim_F(\pi_0(\mathfrak{g})) = \dim_{F}(\pi_{1}(\mathfrak{g}))<\infty$.
		\vspace{5pt}
	\end{enumerate}

\end{theorem}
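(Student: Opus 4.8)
The plan is to reduce everything to \Cref{AffineKD}. That result already gives a contravariant equivalence between $\SCR_{K//F}^{\cN}$ and the full subcategory $\mathcal{D}_0 \subset \Alg_{\LieAlgd^\pi_{F/K}}$ of algebroids $(\mathfrak{g}\xrightarrow{\rho}L_{F/K}^\vee[1])$ with $\fib(\rho)$ coconnective of finite type, and intermediate fields $K\subseteq E\subseteq F$ are in particular objects of $\SCR_{K//F}^{\cN}$. So it suffices to prove three things: (i) every $(\mathfrak{g}\xrightarrow{\rho}L_{F/K}^\vee[1])$ satisfying $(1)$--$(3)$ lies in $\mathcal{D}_0$, hence equals $\mathfrak{D}(B)$ for a unique complete local Noetherian $B\in\SCR_{K//F}$; (ii) $\mathfrak{D}(E)$ satisfies $(1)$--$(3)$ for every intermediate field $E$; and (iii) conversely, if $B\in\SCR_{K//F}^{\cN}$ and $\mathfrak{D}(B)$ satisfies $(1)$--$(3)$, then $B$ is (equivalent to) an intermediate field.

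For (i), I would run the long exact sequence of the fibre sequence $\fib(\rho)\to\mathfrak{g}\to L_{F/K}^\vee[1]$. By \Cref{prop:cotangent_calculation} the complex $L_{F/K}$ is concentrated in homological degrees $0,1$ with finite-dimensional homotopy, so $L_{F/K}^\vee[1]$ is too. Condition $(2)$ then forces $\pi_k(\fib(\rho))=0$ for $|k|\geq 2$ and condition $(1)$ forces $\pi_1(\fib(\rho))=0$, so $\fib(\rho)$ is concentrated in degrees $-1,0$; by $(3)$ its homotopy is finite-dimensional, so $\fib(\rho)\in\Mod^{\ft}_{F,\leq 0}$ and $\mathfrak{g}\in\mathcal{D}_0$. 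For (ii), the underlying arrow of $\mathfrak{D}(E)$ is $L_{F/E}^\vee[1]\xrightarrow{\rho}L_{F/K}^\vee[1]$, the dual shift of the transitivity map $L_{F/K}\to L_{F/E}$. Condition $(2)$ holds since $L_{F/E}$ is concentrated in degrees $0,1$ by \Cref{prop:cotangent_calculation} (applied to the finite extension $F/E$); condition $(1)$ holds since $\rho$ is dual on $\pi_1$ to the surjection $\Omega^1_{F/K}\twoheadrightarrow\Omega^1_{F/E}$ coming from right exactness of Kähler differentials; and condition $(3)$ holds because $\pi_1(\mathfrak{D}(E))=(\Omega^1_{F/E})^\ast$, $\pi_0(\mathfrak{D}(E))=(\pi_1 L_{F/E})^\ast$, and these dimensions agree by Cartier's equality \Cref{Cartier} since $\trdeg_E(F)=0$.

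Step (iii) is the main obstacle, and I would attack it as follows. Write $\kappa=\pi_0(B)/\mathfrak{m}$. First, $B\to F$ is a local homomorphism: the kernel of $\pi_0(B)\to F$ is a prime $\mathfrak{p}$ with $\pi_0(B)/\mathfrak{p}$ a $K$-subalgebra of the finite $K$-algebra $F$, hence a finite domain over the field $K$, hence a field, so $\mathfrak{p}$ is maximal and $\kappa\hookrightarrow F$ is itself an intermediate field with $F/\kappa$ finite. By \Cref{AffineKD} we have $\mathfrak{g}\simeq L_{F/B}^\vee[1]$, so condition $(2)$ says $L_{F/B}$ is concentrated in degrees $0,1$; feeding the transitivity triangle $F\otimes_\kappa L_{\kappa/B}\to L_{F/B}\to L_{F/\kappa}$, the vanishing of $\pi_{\geq 2}(L_{F/\kappa})$, and faithful flatness of $F/\kappa$ into a diagram chase, this is equivalent to $L_{\kappa/B}$ being concentrated in degrees $\leq 1$; since $\pi_0(L_{\kappa/B})=\Omega^1_{\kappa/\pi_0 B}=0$, it is concentrated in degree exactly $1$. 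The technical heart is now the derived regularity criterion: $L_{\kappa/B}$ concentrated in degree $1$ forces $B$ discrete and regular local. I would prove this by the transitivity triangle for $B\to\pi_0 B\to\kappa$ together with $2$-connectivity of $L_{\pi_0 B/B}$ (whose $\pi_2$ is $\pi_1(B)$): it first yields $\pi_2(L_{\kappa/\pi_0 B})=0$, hence $\pi_0(B)$ is regular by the standard "relations vanish by Nakayama" argument, hence $L_{\kappa/\pi_0 B}\simeq\kappa^{d}[1]$; comparing with $L_{\kappa/B}$ in the triangle gives $\kappa\otimes^{\mathbb L}_{\pi_0 B}L_{\pi_0 B/B}=0$, so $L_{\pi_0 B/B}=0$ by Nakayama applied to its homotopy groups, i.e. $B$ is discrete.

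To finish, $B$ regular local means $\mathfrak{m}_B$ is generated by a regular sequence, so \Cref{regsurj} gives $L_{\kappa/B}\simeq(\mathfrak{m}_B/\mathfrak{m}_B^2)[1]\simeq\kappa^{d}[1]$ with $d=\dim B$. Plugging this into the transitivity triangle $F\otimes_\kappa L_{\kappa/B}\to L_{F/B}\to L_{F/\kappa}$ and taking dimensions (using Cartier's equality for $F/\kappa$ so that $\dim_F\pi_0(L_{F/\kappa})=\dim_F\pi_1(L_{F/\kappa})$), one gets $\dim_F\pi_1(L_{F/B})=d+\dim_F\Omega^1_{F/\kappa}$ and $\dim_F\pi_0(L_{F/B})=\dim_F\Omega^1_{F/\kappa}$; since condition $(3)$ is exactly $\dim_F\pi_1(L_{F/B})=\dim_F\pi_0(L_{F/B})$, this forces $d=0$, so $B=\kappa$ is an intermediate field. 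Combining (i), (ii), (iii) with \Cref{AffineKD} identifies the essential image of $\mathfrak{D}|_{\fields_{K//F}}$ with the algebroids satisfying $(1)$--$(3)$, which also completes the proof of \Cref{fundamental_theorem}. The expected difficulty, besides the bookkeeping with Cartier's equality, is the derived regularity criterion in step (iii) — making precise that a cotangent complex $L_{\kappa/B}$ concentrated in a single degree detects discreteness and regularity of a complete local Noetherian simplicial ring.
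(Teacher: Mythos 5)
Your proposal is correct, and its overall architecture matches the paper's: reduce via \Cref{AffineKD} to showing that the complete local Noetherian ring $B$ corresponding to an algebroid satisfying $(1)$--$(3)$ is discrete, regular, local and zero-dimensional, using connectivity of $L_{\pi_0 B/B}$, the regularity criterion $\pi_2(L_{\kappa/\pi_0 B})=0$, Nakayama, and Cartier's equality. Where you genuinely diverge is in the execution of the two main cotangent-complex computations, and your route is cleaner. For the vanishing of $\pi_{\geq 2}(L_{\kappa/B})$ and for the final dimension count you work directly with the single transitivity triangle $F\otimes_\kappa L_{\kappa/B}\to L_{F/B}\to L_{F/\kappa}$ for $B\to\kappa\to F$, whereas the paper compares the fundamental sequences of $K\to R\to F$ and $K\to E\to F$ via \Cref{threemaps}, extracts $F\otimes_E L_{E/R}[-1]$ as a vertical fibre, and closes the dimension argument with an Euler-characteristic computation on a ten-term diagram plus the five lemma. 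Your version isolates exactly where condition $(3)$ enters: once $B$ is known to be discrete regular local, $L_{\kappa/B}\simeq \kappa^d[1]$ splits the six-term sequence into $0\to F^d\to\pi_1(L_{F/B})\to\pi_1(L_{F/\kappa})\to 0$ and $\pi_0(L_{F/B})\cong\Omega^1_{F/\kappa}$, and Cartier for $F/\kappa$ alone forces $d=0$; you never need Cartier for $E/K$. Two small points to tighten: the step ``$\pi_2(L_{\kappa/\pi_0 B})=0$ implies $\pi_0(B)$ regular'' should be attributed to Quillen's criterion (\cite[Corollary 10.5]{Quillen1}, as the paper does) rather than left as a standard argument; and the Nakayama step killing $L_{\pi_0 B/B}$ requires knowing its homotopy groups are finitely generated, which the paper justifies via almost perfectness of $L_{\pi_0 B/B}$ over the Noetherian ring $\pi_0(B)$ --- you should say this explicitly, since Nakayama fails for non-finitely-generated modules.
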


\begin{proof}
	First, suppose that $(\mathfrak{g}\xrightarrow{\rho}L_{F/K}^\vee[1])$ is equivalent to $\mathfrak{D}(E)  = (L_{F/E}^\vee[1]\xrightarrow{\rho}L_{F/K}^\vee[1])$ for some intermediate field $E$.  Condition (1)  then follows as we can identify the map $\pi_1(\mathfrak{g}) \hookrightarrow \pi_1(L_{F/K}^\vee[1])$ with Jacobson's inclusion $\Der_E(F)\hookrightarrow\Der_K(F)$ of $E$-linear derivations into $K$-linear derivations.  Condition (2) follows from the computation of the cotangent complex of   finite  field extensions in   \Cref{prop:cotangent_calculation}.  Finally, condition (3) follows from  Cartier's equality in \Cref{Cartier}, since $F/E$ is an algebraic extension.\vspace{3pt}
	
	For the converse implication, we assume that \vspace{-2pt} $(\mathfrak{g}\xrightarrow{\rho}L_{F/K}^\vee[1])\in \Alg_{\LieAlgd^\pi_{F/K}} $ satisfies $(1)-(3)$.  First, we  show that there is an  $R\in \SCR_{K//F}^{\cN}$ with $\mathfrak{D}(R)\simeq (\mathfrak{g}\xrightarrow{\rho}L_{F/K}^\vee[1])$
	as \mbox{partition Lie algebroids.}
	Indeed, by \Cref{AffineKD}, it is enough to check that $\mathrm{fib}(\rho)$ is coconnective and of finite type. 
	\mbox{Conditions} (2) and (3) together imply that 
	$\mathfrak{g}$ is of finite type,  and the same holds true for $L_{F/K}^\vee[1]$. As
	$\fib(\rho)$ fits into a cofibre sequence with $\mathfrak{g}$ and $L_{F/K}^\vee[1]$, it is of finite type as well.  
	
	To show that $\fib(\rho)$ is coconnective, we look at the exact sequences
	$$ \pi_{n+1}(L_{F/K}^\vee[1])\to\pi_{n}(\fib(\rho))\to \pi_n(\mathfrak{g})\to \pi_n(L_{F/K}^\vee[1]).$$
	For $n\geq 2$, the group $\pi_n(\mathrm{fib}(\rho))$ is nested between two vanishing modules, so must itself be zero.  
	For $n=1$,  $\pi_1(\mathrm{fib}(\rho))$ vanishes since  $\pi_{2}(L_{F/K}^\vee[1])=0$ and   $\pi_1(\mathfrak{g})\hookrightarrow \pi_1(L_{F/K}^\vee[1])$ is injective by  (1).
	
	We may therefore assume  that  $(\mathfrak{g}\xrightarrow{\rho}L_{F/K}^\vee[1])$ is 
	equivalent to  $\mathfrak{D}(R) = (L_{F/R}^\vee[1]\xrightarrow{\rho} L_{F/K}^\vee[1])$  for some complete local Noetherian object $R\in \SCR_{K//F}^{\cN}$. It remains to prove that $R$ is in fact a field, i.e. a discrete simplicial commutative ring which is 
	regular, local, and of dimension zero.\vspace{2pt} \vspace{3pt} 
	
	Observe that since $F$ is a field,  we may reformulate conditions (2) and (3) as 
	\begin{enumerate}
		\item[(2')] Vanishing:  $\pi_k(L_{F/R})=0$ for $k\neq 0, 1$.
		\item[(3')] Balance:  $\dim_F\pi_0 (L_{F/R})=\dim_F\pi_1(L_{F/R})<\infty$.\vspace{3pt} 
	\end{enumerate}
	
	Denote the residue field of $\pi_0(R)$ by $E$.  We first reduce to the case of $F=E$ by showing that the three conditions for $L_{F/R}$ imply the three conditions for $L_{E/R}$.
	First consider the sequence 
	\[L_{E/R}\otimes_EF\to L_{F/R}\to L_{F/E}\]
	It follows that $L_{E/R}$ has non-zero homology only in degrees $0$ and $1$ since it is connective, and the other two terms of the sequence also vanish outside these degrees. For $L_{F/R}$ this is by assumption while for $L_{F/E}$ it is because this is a finite purely inseparable extension of fields.  This verifies the vanishing condition.  Furthermore, the balance condition follows from \[0=\chi(L_{F/R})=\chi(L_{E/R})+\chi(L_{F/E})=\chi(L_{E/R}).\]  
	To check that the injectivity condition $\pi_1(L_{E/R}^\vee[1])\to \pi_1(L_{E/K}^\vee[1])$ holds true,  it is enough to verify that {$\pi_2((L_{R/K}\otimes_R E)^\vee[1])=0$, } but this follows from the corresponding fact about $K\to R\to F$ and the fact that $E\to F$ is faithfully flat.

		 {Now we show that $\pi_0(R)$ is regular.	}
	The  maps $R\rightarrow \pi_0(R) \rightarrow E$ induce an exact sequence
	$$  \pi_2( L_{E/R}) \rightarrow \pi_2(L_{E/\pi_0(R)})\rightarrow \pi_1(E\otimes_{\pi_0(R)} L_{\pi_0(R)/R} ).$$
	As the fibre of $R\rightarrow \pi_0(R)$ is $1$-connective, $L_{\pi_0(R)/R}$ is $2$-connective by  \cite[Corollary 25.3.6.4]{lurie2016spectral}, which implies that $\pi_1(E \otimes_{\pi_0(R)} L_{\pi_0(R)/R} ) = 0$. Since we have also proven that $\pi_2(L_{E/R})=0$, we  deduce that $ \pi_2(L_{E/\pi_0(R)})=0$.
	By \cite[Corollary 10.5]{Quillen1},
	we conclude that $\pi_0(R)$ is regular.  Note that \Cref{regsurj} implies that $L_{E / \pi_0(R)} \simeq  (\mathfrak{m}/ \mathfrak{m}^2)[1]$, where $ \mathfrak{m}$ is the maximal ideal of $\pi_0(R)$. In particular, we see that  $\pi_i(L_{E / \pi_0(R)})$ vanishes for all  $i\neq 1$.

	 {We next show that $R$ is discrete, for which it suffices by \cite[Corollary 25.3.6.6]{lurie2016spectral}
		to  show that the relative cotangent complex $L_{\pi_0(R)/R}$ \mbox{vanishes.}}
	{We have already} seen that $L_{\pi_0(R)/R}$ is  $2$-connective, i.e. that 
	$\pi_n(L_{\pi_0(R)/R}) = 0$ for  {$n\leq 1$}.  So fix $n\geq 2$, and  assume we have already checked  {$\pi_k(L_{\pi_0(R)/R}) = 0$ for all $k<n$.}
	The maps $R \rightarrow \pi_0(R) \rightarrow E$ give  rise to an exact sequence
	$$ \pi_{n+1}(L_{E/\pi_0(R)}) \rightarrow \pi_n(E \otimes_{\pi_0(R)} L_{\pi_0(R)/R}) \rightarrow \pi_n(L_{E/R}).$$
	We have already seen that the two terms on the outside vanish for $n>1$ and therefore we deduce that 
	$\pi_n(E \otimes_{\pi_0(R)} L_{\pi_0(R)/R}) \cong E \otimes_{\pi_0(R)} \pi_n(L_{\pi_0(R)/R})= 0$.
	 {The isomorphism here comes from the fact $n$ is the lowest non-vanishing homology, so this $\mathrm{Tor}$ group is just a tensor product.  Nakayama's lemma then implies that $\pi_n(L_{\pi_0(R)/R})= 0$. To show that Nakayama applies, note that }
	$\pi_0(R)$ is almost of finite presentation over $R$ by \cite[Proposition 3.1.5]{lurie2004derived} since $R$ is Noetherian, which implies by Proposition 3.2.14 in [op.cit.]  that $L_{\pi_0(R)/R}$ is an almost perfect $\pi_0(R)$-module spectrum, which in turn  shows that $\pi_n(L_{\pi_0(R)/R})$ is finitely generated by  Proposition 2.5.10 in [op.cit.].

We deduce that $R \simeq \pi_0(R)$ is a discrete  regular local  ring with maximal ideal $\mathfrak{m}$.
On the other hand since $\pi_0(R)\to E$ is surjective,  we know that $\pi_0(L_{E/R})=0$ and hence,  by balance,  we see that $\pi_1(L_{E/R})=0$.   Hence $\mathfrak{m}/ \mathfrak{m}^2 \cong \pi_1( L_{E / R} ) = 0$ and so $R$ is a discrete  regular local  ring of dimension zero, i.e.\ a field.
\end{proof}

We can now prove the main theorem:\vspace{-2pt}
\begin{proof}[Proof of \Cref{fundamental_theorem}]
By \Cref{AffineKD}, the adjunction $\mathfrak{D}: \SCR_{K//F} ^{\op}\leftrightarrows \Alg_{\LieAlgd^\pi_{F/K}}: C^\ast $ from \Cref{constructadjunction} restricts to an equivalence between $(\SCR_{K//F}^{\cN}) ^{\op}$
and the full subcategory of $F/K$-partition Lie algebroids for which the fibre of the anchor map belongs to $\Mod_{F,\leq 0}^{\ft}$.
The Essential Image \Cref{essentialimagetheorem} shows that further restricting $(\mathfrak{D}\dashv  C^\ast)$ to the subcategory $\fields_{K//F} ^{\op}$ of intermediate fields therefore gives an equivalence between $\fields_{K//F} ^{\op}$ and the full subcategory of $F/K$-partition Lie algebroids satisfying the  conditions $(1)-(3)$ appearing in the theorem. In particular, this subcategory of $F/K$- partition Lie algebroids\vspace{-3pt} is  \mbox{(equivalent to) a poset.}
\end{proof} 

\begin{proof}[Proof of \Cref{geometric_cor}]
 {
Let $X$ be a normal variety over a perfect field $k$ with fraction field $F$.  There is an equivalence of categories between intermediate fields $F/K/F^{p^n}$ and towers of finite $k$-morphisms $X\to Y\to X^{p^n}$ with $Y$ normal.  The forward direction is given by taking the normalization of $\mathcal{O}_{X^{p^n}}$ in $K$, while the reverse is given by taking the fraction field.   It follows that the correspondence with normal varieties follows immediately from the correspondence with field extensions.
}
\end{proof}
 
 \subsection{Modular extensions}
 We will now characterise simple and modular extensions in terms of their associated partition Lie algebroids.   {We begin with some elementary lemmas.}

 \begin{lemma}\label{lem:simple_2}
 { Let $F/K$ be a finite purely inseparable field extension, and let $\alpha\in F$.  Then $F=K(\alpha)$ if and only if $F=(F^pK)(\alpha)$.}
 \end{lemma}
 \begin{proof}
  {If $F=K(\alpha)$, then $F$ is also generated by $\alpha$ over  $F^pK$ since $K\subset F^pK$.  }
	
	Conversely,  suppose that $F=F^pK(\alpha) $, and consider the tower of extensions
	\[F\supseteq F^pK\supseteq F^{p^2}K\supseteq\dots \supseteq F^{p^n}K=K.\]
	 {We show inductively on $e\in\mathbb{N}_{>0}$ that the extension $F/F^{p^e}K$ is generated by $\alpha$.  This holds for $e=1$ by assumption.  Now assume that the statement holds for $e$.  
Since taking $p^{th}$-powers commutes with addition,  we have  $F^p=F^{p^{e+1}}K^p(\alpha^p)$ and hence
	$F^pK=F^{p^{e+1}}K(\alpha^p).$ We can therefore conclude 
	$F=F^{p}K(\alpha)
	=(F^{p^{e+1}}K(\alpha^p))(\alpha)=F^{p^{e+1}}K(\alpha)$.  Thus the statement holds for $e+1$.  The result follows since the tower of extensions above must terminate since $F/K$ is finite.}

 \end{proof}
 
  {Recall that given a finite purely inseparable extension $F/K$, a subset $\{x_i\}\subset F$ is a \emph{$p$-basis} of $F/K$ if $\{dx_i\}$ form a basis of $\Omega_{F/K}$.  By \cite[07P2]{stacks-project}, this is equivalent to the elements $\{\Pi_i x_i^{k_i}\mid 0\leq k_i\leq p-1\}$ forming a basis of $F$ over $KF^p$.  The latter is often given as the definition of $p$-basis, particularly when $F/K$ has exponent one.}

\begin{lemma}\label{simple}
	Let $F/K$ be a finite purely inseparable field extension.  Then $F/K$ is a non-trivial simple extension if and only if $\dim_F\pi_0(L_{F/K})=\dim_F\pi_1(L_{F/K})=1$.
	\end{lemma}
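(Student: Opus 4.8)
The plan is to prove both implications directly from the explicit two-term description of $L_{F/K}$ in \Cref{prop:cotangent_calculation}, using Cartier's equality (\Cref{Cartier}) to see that the two numerical conditions are in fact one. First I would observe that since $F/K$ is algebraic, \Cref{Cartier} gives $\dim_F \pi_0(L_{F/K}) = \dim_F \pi_1(L_{F/K})$; hence the condition in the lemma is equivalent to the single equation $\dim_F \Omega^1_{F/K} = 1$, and it suffices to show that this equation characterises non-trivial simple purely inseparable extensions.

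For the forward implication, suppose $F = K(\alpha)$ with $\alpha \notin K$. I would recall the standard fact that the minimal polynomial of a purely inseparable element $\alpha$ over $K$ has the form $X^{p^e} - a$ with $a = \alpha^{p^e} \in K$ and $e \geq 1$ (take $e$ minimal with $\alpha^{p^e} \in K$ and use uniqueness of $p$-th roots in characteristic $p$). Then $F \cong K[X]/(X^{p^e} - a)$, and \Cref{prop:cotangent_calculation} with $n = 1$ exhibits $L_{F/K}$ as the complex $I/I^2 \to \Omega^1_{K[X]/K} \otimes_{K[X]} F$ with $I = (X^{p^e} - a)$. Both terms are free of rank one over $F$, and the boundary sends the generator $[X^{p^e}-a]$ to $d(X^{p^e}-a) \otimes 1 = p^e X^{p^e-1}\, dX \otimes 1 = 0$, since $e \geq 1$ makes $p^e X^{p^e-1}$ vanish in $K[X]$ and $da = 0$ because $a \in K$. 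Hence $\pi_0(L_{F/K}) \cong \Omega^1_{F/K}$ and $\pi_1(L_{F/K}) \cong I/I^2$ are each one-dimensional over $F$.

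For the converse implication, assume $\dim_F \Omega^1_{F/K} = 1$; in particular $\Omega^1_{F/K} \neq 0$, so $F \neq K$. I would choose $\alpha \in F$ whose differential $d\alpha$ spans $\Omega^1_{F/K}$, set $E = K(\alpha)$, and apply the fundamental cofibre sequence (\Cref{lem:triangle}) to $K \to E \to F$ on $\pi_0$, giving the right-exact sequence $F \otimes_E \Omega^1_{E/K} \to \Omega^1_{F/K} \to \Omega^1_{F/E} \to 0$. Since $d\alpha$ lies in the image of the first map and generates the middle term, that map is surjective, so $\Omega^1_{F/E} = 0$. Applying \Cref{Cartier} to the algebraic extension $F/E$ then forces $\Upsilon_{F/E} = \pi_1(L_{F/E}) = 0$ as well, so $F/E$ is separable (by the criterion recalled after \Cref{Cartier}); being simultaneously purely inseparable, $F = E = K(\alpha)$, a non-trivial simple extension.

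The only step with genuine content is the converse: passing from a one-dimensional space of Kähler differentials to an actual primitive element. The mechanism — surjectivity in the relative cotangent sequence combined with the vanishing criterion $\Upsilon_{F/E} = 0 \iff F/E$ separable — is short, but it is the part of the argument that is not purely formal; everything else reduces to the rank-one cotangent-complex computation of \Cref{prop:cotangent_calculation} and bookkeeping with Cartier's equality.
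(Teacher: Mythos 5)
Your proof is correct, but it follows a genuinely different route from the paper's. Both arguments open the same way, using Cartier's equality (\Cref{Cartier}) to reduce the two numerical conditions to the single equation $\dim_F\Omega^1_{F/K}=1$. From there the paper stays in classical field theory: it treats the exponent-one case via $p$-bases (a single-element $p$-basis is equivalent to $\dim_F\Omega^1_{F/K}=1$), and reduces the general case to exponent one using $\Omega^1_{F/K}\cong\Omega^1_{F/F^pK}$ together with an explicit induction showing that a generator of $F$ over $F^pK$ already generates $F$ over $K$. You instead prove the forward direction by writing $F\cong K[X]/(X^{p^e}-a)$ and reading off from \Cref{prop:cotangent_calculation} that both terms of $L_{F/K}$ are free of rank one with vanishing differential (since $d(X^{p^e}-a)=p^eX^{p^e-1}\,dX=0$); and you prove the converse by choosing $\alpha$ with $d\alpha$ spanning $\Omega^1_{F/K}$, deducing $\Omega^1_{F/K(\alpha)}=0$ from the right-exact sequence of \Cref{lem:triangle}, and then invoking Cartier's equality once more to get $\Upsilon_{F/K(\alpha)}=0$, hence $F/K(\alpha)$ separable as well as purely inseparable, hence trivial. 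Your converse avoids the paper's induction on the exponent entirely and is arguably shorter, at the cost of leaning on the separability criterion $\Upsilon_{F/E}=0\iff F/E$ separable (which the paper does recall after \Cref{Cartier}, so this is a legitimate input); the paper's argument is more self-contained field theory and produces the explicit statement that a generator modulo $F^pK$ is a genuine generator, which has some independent interest. Both proofs are complete and correct.
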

\begin{proof}
By Lemma \ref{Cartier}, it is sufficient to show that $F/K$ is a non-trivial simple extension if and only if $\dim_F\Omega^1_{F/K}=1$.
	
	If $F/K$ has exponent $1$, then $F/K$ is a non-trivial simple extension if and only if it has a $p$-basis consisting of a single element by \cite[07P2]{stacks-project}.
	
 {	If $F/K$ has exponent $n>1$, we note that since $\Omega^1_{F/K}\cong\Omega^1_{F/F^pK}$,  it suffices to show that $F/K$ is simple if and only if the exponent $1$ extension $F/F^pK$ is simple by \Cref{lem:simple_2}, and $F/K$ is non-trivial if and only if $F/F^pK$ is simple by \Cref{lem:simple_2} applied to $\alpha=1$.}
	\end{proof}
	
	\begin{remark}
	We note that Lemma \ref{simple} is consistent with our main theorem, despite the fact that it shows that if we have a chain of non-trivial finite simple extensions \[K=E_n\subset E_{n-1}\subset ...\subset E_1\subset E_0=F\] where there is a single element $\alpha\in F$ such that $E_{i-1}=E_{i}(\alpha^{p^i})$, then the homotopy groups of the corresponding partition Lie algebroids $\mathfrak{gal}_{F/K}(E_i)$ are all isomorphic as	 $F$-modules.
	
Indeed, consider the following exact sequence consists of one dimensional $F$-modules:	
			\[ 0\to \pi_1(L_{F/E_{i-1}}^\vee[1])\to\pi_1(L_{F/E_{i}}^\vee[1])\to\pi_1(F\otimes_{E_{i-1}} L_{E_{i-1}/E_i}^\vee[1])\] \[\to \pi_0(L_{F/E_{i-1}}^\vee[1])\to\pi_0(L_{F/E_{i}}^\vee[1])\to\pi_0(F\otimes_{E_{i-1}} L_{E_{i-1}/E_i}^\vee[1])\to 0	\]
	By the one-dimensionality, the first  injective map is  an isomorphism.  Hence following through the sequence we find that $\pi_0(L_{F/E_{i-1}}^\vee[1])\to\pi_0(L_{F/E_{i}}^\vee[1])$ is the zero map, and hence there is no isomorphism of partition Lie algebroids  {even though the underlying $F$-modules are isomorphic}.  A similar argument applies to a sequence of iterated Frobenius maps \[K=F^{p^n}\subset F^{p^{n-1}}\subset...\subset F^p\subset F,\] for an $F$-finite field $F$, where the homotopy groups have dimension $\dim_F(\Omega^1_{F/F^p})$.
			\end{remark}
 
 Now we reach the characterisation of modular extensions:

\begin{proposition}
	Let $F/K$ be a finite purely inseparable extension. 
	Then $F/K$ is modular precisely if there  are finitely many $F/K$-partition Lie algebroids 
	$$\rho_i:\mathfrak{g}_i\to L_{F/K}^\vee[1]$$
	such that the following conditions hold:
	\begin{enumerate}
		\item each $ \mathfrak{g}_i $ satisfies conditions $(1)-(3)$ of Theorem \ref{fundamental_theorem};
		\item  $\dim_F(\pi_0(\fib(\rho_i)))=1$ for each $i$;
		\item the canonical map $L_{F/K}^\vee\rightarrow \oplus_i\fib(\rho_i)$ is an equivalence in $\Mod_F$.
	\end{enumerate} 
\end{proposition}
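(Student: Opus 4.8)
The plan is to match both sides of the claimed equivalence against the Fundamental Theorem and then reduce to a computation with relative cotangent complexes.

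\textbf{Step 1: the dictionary.} By \Cref{fundamental_theorem}, an $F/K$-partition Lie algebroid $\mathfrak{g}$ satisfying condition $(1)$ of the statement (that is, conditions $(1)$--$(3)$ of \Cref{fundamental_theorem}) is equivalent to $\mathfrak{D}(E)$ for a \emph{unique} intermediate field $K\subseteq E\subseteq F$, with underlying arrow $\rho\colon L_{F/E}^\vee[1]\to L_{F/K}^\vee[1]$. Dualising the fundamental cofibre sequence (\Cref{lem:triangle}) of $K\to E\to F$ identifies $\fib(\rho)\simeq (F\otimes_E L_{E/K})^\vee$ --- this is exactly the computation carried out in the proof of \Cref{fibre_anchor} --- so $\pi_0(\fib(\rho))\cong (F\otimes_E\Omega^1_{E/K})^\vee$, and condition $(2)$ of the statement holds for such an algebroid precisely when $\dim_{E}\Omega^1_{E/K}=1$, i.e. by \Cref{simple} precisely when $E/K$ is a non-trivial simple extension. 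Since $L_{F/K}$ and each $F\otimes_{E}L_{E/K}$ are perfect $F$-modules (\Cref{prop:cotangent_calculation}), $F$-linear duality turns condition $(3)$ into the statement that the canonical map $\bigoplus_i F\otimes_{E_i}L_{E_i/K}\to L_{F/K}$ is an equivalence; writing $T:=E_1\otimes_K\cdots\otimes_K E_k$ and using the Künneth formula $L_{T/K}\simeq\bigoplus_i T\otimes_{E_i}L_{E_i/K}$ together with \Cref{lem:triangle} for $K\to T\xrightarrow{\mu}F$ (where $\mu$ is multiplication), this becomes $L_{F/T}\simeq 0$.

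\textbf{Step 2: the forward direction.} If $F/K$ is modular, pick a presentation $F=E_1\otimes_K\cdots\otimes_K E_k$ with each $E_i=K[X_i]/(X_i^{p^{n_i}}-a_i)$ a non-trivial simple purely inseparable extension (discard trivial factors; if $F=K$ take the empty family), and put $\mathfrak{g}_i:=\mathfrak{D}(E_i)$. Condition $(1)$ holds by \Cref{fundamental_theorem}; condition $(2)$ holds because each $E_i/K$ is non-trivial simple, by Step 1; and for condition $(3)$ we have $T=F$ in the notation of Step 1, so $L_{F/T}=L_{F/F}\simeq 0$.

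\textbf{Step 3: the converse.} Suppose algebroids $\mathfrak{g}_1,\dots,\mathfrak{g}_k$ as in the statement exist. By Step 1 we obtain non-trivial simple intermediate extensions $E_i=K[X_i]/(X_i^{p^{n_i}}-a_i)$ with $L_{F/T}\simeq 0$ for $T=E_1\otimes_K\cdots\otimes_K E_k$, and it remains to prove that $\mu\colon T\to F$ is an isomorphism; then $F\cong E_1\otimes_K\cdots\otimes_K E_k$ is modular by definition. The crucial structural fact is that $T$ is Artinian local: writing $e$ for the exponent of $F/K$ and using that Frobenius is additive, for any $t\in T$ the element $t^{p^e}$ lies in the image of $K$ in $T$ (since $x^{p^e}\in K$ for every $x\in F$, hence for every $x$ in each $E_i$); thus for every prime $\mathfrak{p}\subset T$ and every $t\in\mathfrak{p}$ we get $t^{p^e}\in K\cap\mathfrak{p}=0$, so $\mathfrak{p}\subseteq\operatorname{nil}(T)$ and $T$ has a unique prime. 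Let $E:=\operatorname{im}(\mu)$ be the compositum of the $E_i$ in $F$; it is a field, and $T\twoheadrightarrow E$ has kernel the maximal ideal $\mathfrak{m}_T$. Applying \Cref{lem:triangle} to $T\to E\to F$ and using $L_{F/T}\simeq 0$ gives $L_{F/E}\simeq (F\otimes_E L_{E/T})[1]$, whence $\Upsilon_{F/E}=\pi_1(L_{F/E})\cong F\otimes_E\Omega^1_{E/T}=0$ (as $T\to E$ is surjective); but $F/E$ is purely inseparable, so $\Upsilon_{F/E}=0$ forces $F=E$ (\Cref{Cartier} and the remark following it), i.e. $\mu$ is surjective. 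Then $\ker\mu=\mathfrak{m}_T$, so $\pi_1(L_{F/T})\cong\mathfrak{m}_T/\mathfrak{m}_T^2=0$, and Nakayama gives $\mathfrak{m}_T=0$, hence $T\cong F$.

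\textbf{Main obstacle.} The content is concentrated in Step 3, and specifically in the two facts that $T=E_1\otimes_K\cdots\otimes_K E_k$ is Artinian local and that $L_{F/T}\simeq 0$ then collapses $\mu$ to an isomorphism; this is exactly where pure inseparability of $F/K$ is used, through the ``Frobenius lands in $K$'' argument for locality and through the non-separability of the residual extension $F/E$. A minor bookkeeping point to verify en route is that the canonical map of condition $(3)$ is literally the one induced by the inclusions $E_i\hookrightarrow F$ --- this is immediate from the construction of $\fib$ in \Cref{fibre_anchor} and of $\mathfrak{D}$ in \Cref{constructadjunction}, by naturality of the fundamental cofibre sequence.
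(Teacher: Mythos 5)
Your proof is correct, and its overall architecture coincides with the paper's: the forward direction via the K\"unneth decomposition $L_{T/K}\simeq\bigoplus_i T\otimes_{E_i}L_{E_i/K}$, and the converse by translating condition $(3)$ into $L_{F/T}\simeq 0$ for $T=E_1\otimes_K\cdots\otimes_K E_k$ and then showing $T\to F$ is an isomorphism. Where you diverge is in the endgame of the converse, and your route is the more elementary one. For locality of $T$ the paper inductively invokes the main theorem of the cited reference on tensor products of local rings, whereas you observe directly that $p^e$-th powers of elements of $T$ land in $K$, so every prime is the nilradical; this is cleaner and self-contained. For surjectivity onto $F$ the paper deduces $\Omega^1_{F/F'}=0$, hence $F=F'F^p$ by Jacobson, and iterates using finite exponent; you instead read off $\Upsilon_{F/E}=\pi_1(L_{F/E})\cong F\otimes_E\Omega^1_{E/T}=0$ from the rotated cofibre sequence and quote that vanishing of the module of imperfection forces a purely inseparable extension to be trivial --- two faces of the same fact. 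Finally, to kill $\ker\mu$ the paper appeals to the Koszul duality equivalence of \Cref{AffineKD} (since $L_{F/T}\simeq 0$ makes $\mathfrak{D}(T)\to\mathfrak{D}(F)$ an equivalence), whereas you use $\pi_1(L_{F/T})\cong\mathfrak{m}_T/\mathfrak{m}_T^2=0$ together with Nakayama in the Artinian local ring $T$; your version avoids the derived machinery entirely at this step and makes the commutative-algebra content more transparent. Both approaches are sound; the paper's is more uniform with the rest of the article, yours is more economical.
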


\begin{proof}
	Suppose that $F/K$ is modular.  Then by definition it can be expressed as a tensor product of simple extensions $F\cong \otimes_K E_i$, where $E_i=K(\alpha_i)\subset F$.  Setting $ \mathfrak{gal}_{F/K}(E_i) = (\mathfrak{g}_i \xrightarrow{\rho_i} L_{F/K}^\vee[1])$, we note that $(1)$ holds by  \Cref{fundamental_theorem} and $(2)$ holds by \Cref{simple}. For $(3)$, we use \cite[09DA]{stacks-project}  to conclude that $L_{F/K}\cong \oplus_i (F\otimes_{E_i} L_{E_i/K})$, as required.\vspace{3pt} 
	
	Conversely suppose that we have  $$\rho_i:\mathfrak{g}_i\to L_{F/K}^\vee[1]$$ satisfying the given conditions.  Then by Theorem \ref{fundamental_theorem} there are intermediate fields $K\subset E_i\subset F$ such that $\mathfrak{gal}_{F/K}(E_i) = (\mathfrak{g}_i \xrightarrow{\rho_i} L_{F/K}^\vee[1])$, and $E_i/K$ are simple by Lemma \ref{simple}, again using the fact that $\fib(\rho_i)\cong (F\otimes_{E_i} L_{E_i/K})^\vee$.  Let $R=\otimes_K E_i$, and let $f:R\to F$ be the natural map, which we must show is an isomorphism.  Again by \cite[09DA]{stacks-project}, we have $$L_{R/K}\simeq \oplus_i (R\otimes_{E_i} L_{E_i/K}).$$
	The fibre sequence associated with  $K\to R\to F$ is  $F\otimes_R L_{R/K}\to L_{F/K}\to L_{F/R}$, and by assumption, the left hand map is an equivalence, and hence $L_{F/R}\simeq 0$.

	 {
	Note that $R\to F$ is of finite presentation since $F$ is a finite extension of the residue field of $R$, so it follows that $R\to F$ is \'etale.  Now by \cite[025G]{stacks-project}, $\Spec(F)\to\Spec(R)$ is an open immersion, and therefore an isomorphism since $F$ is a field.}
\end{proof}

\newcommand{\Fbox}[1]{\fbox{\strut#1}}
\setlength{\fboxsep}{1pt} 
 
\tableofcontents

\newpage

\newpage
\bibliographystyle{amsalpha}
\bibliography{library}

\end{document}